\theoremstyle{plain}
\newtheorem{cor}{Corollary}
\newtheorem{lem}[cor]{Lemma}
\newtheorem{prop}[cor]{Proposition}
\newtheorem{thm}[cor]{Theorem}
\newtheorem*{thm*}{Theorem}
\theoremstyle{definition}
\newtheorem{definition}[cor]{Definition}
\newtheorem{remark}[cor]{Remark}
\newtheorem{assumption}[cor]{Assumption}
\numberwithin{cor}{section}
\numberwithin{equation}{section}
\DeclareMathOperator*{\essinf}{ess\,inf}
\DeclareMathOperator{\C}{C}
\newcommand{\E}{\mathbb{E}}
\newcommand{\esssup}{\text{ess sup}}
\renewcommand{\d}{\delta}
\renewcommand{\and}{\quad\textrm{ and }\quad}
\renewcommand{\P}{\mathbb{P}}
\renewcommand{\a}{\alpha}
\renewcommand{\b}{\beta}
\renewcommand{\k}{\kappa}
\renewcommand{\O}{\Omega}
\newcommand{\sgn}{\text{sgn}}
\newcommand{\F}{\mathcal F}
\newcommand{\R}{\mathbb{R}}
\newcommand{\TT}{\mathbb{T}}
\newcommand{\N}{\mathbb{N}}
\newcommand{\Z}{\mathbb{Z}}
\newcommand{\norm}[1]{\left\| #1 \right\|}
\newcommand{\ve}{\varepsilon}
\newcommand{\Sum}{\sum\limits}
\newcommand{\abs}[1]{| #1 |}
\providecommand{\ud}[1]{\, \mathrm{d} #1}
\providecommand{\dx}{\ud{x}}
\providecommand{\dy}{\ud{y}}
\providecommand{\dxi}{\ud \xi}
\providecommand{\deta}{\ud{\eta}}
\providecommand{\dr}{\ud{r}}
\providecommand{\dxip}{\ud{\xi'}}
\providecommand{\ds}{\ud{s}}
\providecommand{\dt}{\ud{t}}
\providecommand{\dd}{\ud}
\newcommand{\eps}{\varepsilon}
\def\XXint#1#2#3{{\setbox0=\hbox{$#1{#2#3}{\int}$ }
\vcenter{\hbox{$#2#3$ }}\kern-.6\wd0}}
\begin{document}

\title[SPDE and fluctuations of the SSEP]{Conservative stochastic PDE and fluctuations of the symmetric simple exclusion process}

\subjclass[2010]{35Q84, 60F10, 60H15, 60K35, 82B21 (primary); 35D30, 37H05, 60L50, 60H17, 82B31 (secondary)}
\keywords{interacting particle system, large deviations, stochastic PDE}

\date{\today}

\author{Nicolas Dirr}
\address{School of Mathematics, University of Cardiff, CF24 4AG Cardiff, United Kingdom}
\email{DirrNP@cardiff.ac.uk}

\author{Benjamin Fehrman}
\address{Department of Mathematics, Louisiana State University, Louisiana 70802, United States}
\email{fehrman@math.lsu.edu}

\author{Benjamin Gess}
\address{Max Planck Institute for Mathematics in the Sciences, 04103 Leipzig, Germany \newline \indent Fakult\"at f\"ur Mathematik, Universit\"at Bielefeld, 33615 Bielefeld, Germany}
\email{Benjamin.Gess@mis.mpg.de}

\begin{abstract}  In this paper, we provide a continuum model for the fluctuations of the symmetric simple exclusion process about its hydrodynamic limit.  The model is based on an approximating sequence of stochastic PDEs with nonlinear, conservative noise.  In the small-noise limit, we show that the fluctuations of the solutions are to first-order the same as the fluctuations of the particle system.  Furthermore, the SPDEs correctly simulate the rare events in the particle process.  We prove that the solutions satisfy a zero-noise large deviations principle with rate function equal to that describing the deviations of the symmetric simple exclusion process.  \end{abstract}

\maketitle

\section{Introduction}

In this work we propose a nonlinear, conservative stochastic PDE as a continuum model incorporating fluctuation corrections for the symmetric simple exclusion process (SSEP).   More precisely, for every $N\in\N$ let $\eta^N_{t}(x)$ be the SSEP on the discrete torus $\Z^{d}/N\Z^d$ with slowly varying initial state $\overline{\rho}_0(\nicefrac{x}{N})$ for some $\overline{\rho}_0\in\C^4(\TT^d;[0,1])$ (for details see Kipnis and Landim \cite[Chapter~2]{KipLan1999}). Then, the parabolically rescaled empirical measures
\[\pi_{N}:=\frac{1}{N^d}\sum_{x\in(\Z^{d}/N\Z^d)}\delta_{\frac{x}{n}}\eta_{N^{2}t}(x),\]
defined on the unit torus $\TT^d$, converge in probability as $N\rightarrow\infty$ to a deterministic measure that is absolutely continuous with respect to the Lebesgue measure, with density $\bar{\rho}$ solving
\begin{equation}\label{intro_heat}\partial_{t}\bar{\rho}	=\Delta\bar{\rho}\;\;\textrm{in}\;\;\TT^d\times(0,T)\;\;\textrm{with}\;\;
\bar{\rho}(\cdot,0)=\bar{\rho}_{0}.\end{equation}
In this way, solutions to the heat equation \eqref{intro_heat} describe the SSEP dynamics up to zeroth order.  To reach higher-order continuum approximations, it is necessary to incorporate fluctuations present in $\pi_{N}$. The non-equilibrium central limit fluctuations of $\pi_{N}$ have been analyzed in $d=1$ by Calves, Kipnis, and Spohn \cite{CalKipSpo} and in $d\geq 2$ by Ravishankar \cite{Rav1992}, where it is shown that the measures
\[m_N:=N^{\frac{1}{2}}(\pi_{N}-\E\pi_{N}),\]
converge as $N\rightarrow\infty$ to the generalized Ornstein--Uhlenbeck process $m$ solving
\begin{equation}\label{intro_gen_ou}\partial_t m = \Delta m - \nabla\cdot (\sqrt{\overline{\rho}(1-\overline{\rho})}\xi)\;\;\textrm{in}\;\;\TT^d\times(0,T)\;\;\textrm{with}\;\;m=0\;\;\textrm{on}\;\;\TT^d\times\{0\},\end{equation}
%r
for $\xi$ a $d$-dimensional space-time white noise and for $\overline{\rho}$ solving \eqref{intro_heat}.  Since solutions of \eqref{intro_gen_ou} are not function-valued, the convergence is in distribution on the space $D([0,\infty);\mathcal{D}'(\TT^d))$, see \cite{Rav1992} and Jara and Landim \cite[Theorem~2.2]{JarLan2006}.

Introducing the fluctuation corrected continuum model $\bar{\rho}_{N}:=\overline{\rho}\dx+\frac{1}{\sqrt{N}}m$ yields
\begin{equation}\label{intro_first_order_1}\pi_{N}=\bar{\rho}_{N}+(\E\pi_N-\overline{\rho}\dx)+o(\nicefrac{1}{\sqrt{N}}).\end{equation}
It is shown in \cite{JarLan2006} that $\E\pi_N$ is a solution to the discrete heat equation, and the estimates of \cite[Theorem~A.1]{JarLan2006} that compare $\E\pi_N$ to the solution of \eqref{intro_heat} prove with \eqref{intro_first_order_1} that we have the first order expansion
\begin{equation}\label{intro_first_order}\pi_{N}=\bar{\rho}_{N}+o(\nicefrac{1}{\sqrt{N}}).\end{equation}

However, while it follows from \eqref{intro_first_order} that the continuum model $\overline{\rho}_N$ correctly describes to first-order the central limit fluctuations of the $\pi_N$, the rare events of $\pi_{N}$ are not correctly captured by the affine linear expansion $\bar{\rho}_{N}$.  Indeed, in Quastel, Rezakhanlou, and Varadhan \cite{QuaRezVar1999} and \cite[Chapter~10]{KipLan1999} it has been shown that the dynamical (equilibrium) large deviations for $\pi_{N}$ are described in terms of the rate function
\begin{equation}\label{intro_rate_function}I_{\rho_0}(\mu)=\frac{1}{2}\inf\big\{\norm{g}^2_{L^2}\colon\,\mu=\rho\,dx,\,\partial_{t}\rho=\Delta\rho-\nabla\cdot(\sqrt{\rho(1-\rho)}g)\;\;\textrm{with}\;\;\rho(\cdot,0)=\rho_0\big\}.\end{equation}
In contrast $\overline{\rho}_N$ solves the linear equation, for the solution $\overline{\rho}$ of \eqref{intro_heat},
\begin{equation}\label{intro_rho_N}\partial_t \overline{\rho}_N = \Delta \overline{\rho}_N - \frac{1}{\sqrt{N}}\nabla\cdot (\sqrt{\overline{\rho}(1-\overline{\rho})}\xi)\;\;\textrm{in}\;\;\TT^d\times(0,T)\;\;\textrm{with}\;\;\overline{\rho}_N=\rho_0\;\;\textrm{on}\;\;\TT^d\times\{0\},\end{equation}
and, as follows formally from \eqref{intro_rho_N}, the rate function associated to rare events of $\bar{\rho}_{N}$ is given by
\[J_{\rho_0}(\mu)=\frac{1}{2}\inf\big\{\norm{g}^2_{L^2}\colon\,\mu=\rho\,dx,\,\partial_{t}\rho=\Delta\rho-\nabla\cdot(\sqrt{\bar{\rho}(1-\bar{\rho})}g)\;\;\textrm{with}\;\; \rho(\cdot,0)=\rho_0\big\}.\]

Motivated by work of the second two authors in the context of the zero range process \cite{FehGes19}, and by connections to macroscopic fluctuation theory (see, for example, Bertini, De Sole, Gabrielli, Jona Lasinio, and Landim \cite{BDSGJLL15} and Derrida \cite{D07}) and fluctuating hydrodynamics (see, for example, Hohenberg and Halperin \cite{HH77}, Landau and Lifshitz \cite{LL87}, Spohn \cite{S91}, Bouchet, Gaw\c edzki, and Nardini \cite{BGN16}), in this work, we introduce the nonlinear, stochastic PDE
\begin{equation}\label{intro_approx_eq}\partial_{t}\rho_{N}=\Delta\rho_{N}-\frac{1}{\sqrt{N}}\nabla\cdot\Big(\sqrt{\rho_N(1-\rho_N)}\circ\xi^{K(N)}\Big),\end{equation}
for spectral approximations $\{\xi^K\}_{K\in\N}$  of $\R^d$-valued space-time white noise.  We prove that, under an appropriate $N\rightarrow\infty$, $K(N)\rightarrow\infty$ scaling, the solutions $\rho^{N}$ not only yield a first order expansion
\[\pi_{N}=\rho_{N}+o(\nicefrac{1}{\sqrt{N}}),\]
analogous to \eqref{intro_first_order}, but also display the correct rare event behavior, in the sense that the large deviations for the $\rho_{N}$ are described by the rate function \eqref{intro_rate_function}.  That is, as $N\rightarrow\infty$,
\[\P[\rho_{N}\in A]\approx e^{-N\inf_{\mu\in A}I_{\rho_0}(\mu)}.\]
The introduction of spatially correlated noise in \eqref{intro_approx_eq} is done in order to obtain function-valued solutions.  This fact is already present in the simpler equation \eqref{intro_gen_ou}, for which solutions exist only in a space of distributions and stand in contrast to the large deviations of the exclusion process \cite{KipLan1999,QuaRezVar1999}, which are supported on a space of functions.

We will henceforth let $\ve\in(0,1)$ play the role of $\nicefrac{1}{N}$ to emphasize the fact that the results apply to the full continuous limit $\ve\rightarrow 0$, and we will consider the solutions of the equation
\begin{equation}\label{intro_fd_eq} \partial_t\rho^\ve = \Delta\rho^\ve - \sqrt{\ve}\nabla\cdot\Big(\sqrt{\rho^\ve(1-\rho^\ve)}\circ\xi^{K(\ve)}\Big)\;\;\textrm{in}\;\;\TT^d\times(0,T)\;\;\textrm{with}\;\;\rho^\ve(\cdot,0)=\rho_0,\end{equation}
where $\xi^{K(\ve)}$ is a spectral approximation of space-time white noise defined in Section~\ref{sec_noise}, and where $\circ$ denotes Stratonovich integration.  We will write \eqref{intro_fd_eq} in its It\^o formulation
\begin{equation}\label{intro_Ito} \partial_t\rho^\ve = \Delta\rho^\ve - \sqrt{\ve}\nabla\cdot\Big(\sqrt{\rho^\ve(1-\rho^\ve)}\xi^{K(\ve)}\Big)+\frac{\ve \langle \xi^{K(\ve)}\rangle}{8}\Delta\Theta(\rho^\ve),\end{equation}
for the spatially constant quadratic variation $\langle \xi^{K(\ve)}\rangle$ of the probabilistically stationary noise $\xi^{K(\ve)}$, and for $\Theta\in \C^1(0,1)$ satisfying $\Theta'(\xi) = \frac{(1-2\xi)^2}{\xi(1-\xi)}$.  Equation \eqref{intro_Ito} demonstrates two fundamental difficulties in treating \eqref{intro_fd_eq}.  The first is the irregularity appearing in the noise coefficient due to the singularities of the square-root and its derivative near $\rho^\ve\simeq 0$ and $\rho^\ve\simeq 1$.  The second is the logarithmic divergence of $\Theta$ near $\xi\simeq 0$ and $\xi\simeq 1$ and the fact that $\Theta(\rho^\ve)$ is not known to be integrable.  For these reasons, it is not even clear how to define a notion of weak solution to \eqref{intro_fd_eq}.

These issues were originally addressed by the second two authors in \cite{FG21}, where a general class of SPDEs were treated including, for example, the Dean--Kawasaki equation with correlated noise
\begin{equation}\label{intro_DK}\partial_t\rho = \Delta\rho-\sqrt{\ve}\nabla\cdot(\sqrt{\rho}\circ\xi^{K(\ve)}).\end{equation}
In this case, the Stratonovich-to-It\^o correction presents similar difficulties and takes the form
\[\partial_t\rho = \Delta\rho-\sqrt{\ve}\nabla\cdot\Big(\sqrt{\rho}\xi^{K(\ve)}\Big)+\frac{\ve\langle\xi^{K(\ve)}\rangle}{8}\Delta\log(\rho^\ve).\]
In \cite{FG21} the authors proved the well-posedness of \emph{stochastic kinetic solutions} based on the equation's kinetic formulation by renormalizing the equation away from the zero set of the solution.  We refer to Section~\ref{sec_rks} of this paper and \cite[Section~3]{FG21} for a derivation of the equation's kinetic form and a more complete explanation of the difficulties in treating equations like \eqref{intro_Ito} and \eqref{intro_DK}.  In terms of \eqref{intro_fd_eq}, the equation must be localized away from both of the sets $\{\rho^\ve\simeq 0\}$ and $\{\rho^\ve \simeq 1\}$.  The first main result of this work extends the methods of \cite{FG21} to equations with noise coefficients like $\sqrt{\rho^\ve(1-\rho^\ve)}$ that contain multiple singularities.

\begin{thm*}[cf.\ Theorems~\ref{thm_rks_unique}, \ref{thm_rks_exist}] Let $T\in(0,\infty)$, let $\ve\in(0,1)$, let the noise $\{\xi^K\}_{K\in\N}$ satisfy Assumption~\ref{def_noise_trig} (see also Assumption~\ref{def_noise}) with respect to a filtration $(\F_t)_{t\in[0,\infty)}$ on some probability space $(\O,\F,\P)$, and let $\rho_0\in L^2(\O;L^2(\TT^d;[0,1]))$ be $\F_0$-measurable.  Then there exists a unique \emph{stochastic kinetic solution} of \eqref{intro_fd_eq} in the sense of Definition~\ref{def_sol}.  Furthermore, $\P$-a.s.\ stochastic kinetic solutions $\rho^\ve_1, \rho^\ve_2$ corresponding to initial data $\rho_{0,1}, \rho_{0,2}$ satisfy
\[\sup_{t\in[0,T]}\norm{\rho^\ve_1(\cdot,t)-\rho^\ve_2(\cdot,t)}_{L^1(\TT^d)}\leq \norm{\rho_{0,1}-\rho_{0,2}}_{L^1(\TT^d)}.\]
\end{thm*}

In the second main result of this paper, we identify a scaling regime such that the solutions of \eqref{intro_fd_eq} satisfy a central limit theorem (CLT) equal to that of the SSEP.  The fluctuations are characterized by a generalized Ornstein--Uhlenbeck process $v$ which is a continuous $H^{-s}(\TT^d)$-valued process, for every $s>\frac{d}{2}$, that solves the equation
\begin{equation}\label{intro_clt}\partial_t v = \Delta v-\nabla\cdot(\sqrt{\overline{\rho}(1-\overline{\rho})}\xi)\;\;\textrm{in}\;\;\TT^d\times(0,T)\;\;\textrm{with}\;\;v=0\;\;\textrm{on}\;\;\TT^d\times\{0\},\end{equation}
for $\xi$ an $\R^d$-valued space-time white noise and for $\overline{\rho}$ the solution of \eqref{intro_heat}.  

One essential difficulty in proving the CLT is that the solutions of \eqref{intro_fd_eq} are defined in the renormalized sense of Definition~\ref{def_sol}, and the fundamentally nonlinear nature of this definition is incompatible with convergence in a space of distributions.  For this reason, we first prove in Theorem~\ref{thm_clt} a quantitative CLT for solutions of \eqref{intro_fd_eq} with the square root $\sqrt{\rho^\ve(1-\rho^\ve)}$ replaced by a $\C^1$-smooth, approximating nonlinearity $\sigma(\rho^\ve)$.  We then transfer the CLT for the approximating equation to the solutions of \eqref{intro_fd_eq}, which essentially relies on the following novel $L^\infty$-estimate, the proof of which is based on a Moser iteration.

\begin{thm*}[cf.\ Theorem~\ref{prop_est_infty}]  Let $T\in(0,\infty)$, let $\ve\in(0,1)$, let $\{\xi^K\}_{K\in\N}$ satisfy Assumption~\ref{def_noise_trig} (see also Definition~\ref{def_noise}), let $\rho_0\in L^2(\O;L^2(\TT^d;[0,1]))$ be $\F_0$-measurable, let $M=\esssup_{x\in\TT^d}\rho_0(x)$, and let $M'=\essinf_{x\in\TT^d}\rho_0(x)$.  Then, if $\rho^\ve$ is the unique solution of \eqref{intro_fd_eq} in the sense of Definition~\ref{def_sol} below, there exist $c,\gamma\in(0,\infty)$ independent of $\ve$ and $K$ but depending on $T$ such that
\[\E\Big[\norm{(\rho^\ve-M)_+}_{L^\infty(\TT^d\times[0,T])}\Big]+\E\Big[\norm{(\rho^\ve-M')_-}_{L^\infty(\TT^d\times[0,T])}\Big]\leq c\big(\ve K^{d+2}\big)^\gamma. \]
\end{thm*}

For initial data taking values in $[\d,1-\d]$, for some $\d\in(0,\nicefrac{1}{2})$, this $L^\infty$-estimate proves that, along appropriate scaling limits, the solutions of \eqref{intro_fd_eq} take values in $[\nicefrac{\d}{2},1-\nicefrac{\d}{2}]$ with high probability.  On this event, the pathwise uniqueness proof of Theorem~\ref{thm_rks_unique} below shows that the solutions of \eqref{intro_fd_eq} and the solutions of the approximating SPDE agree for an appropriately chosen $\sigma$, which together with Theorem~\ref{thm_clt} proves the following quantitative CLT in probability.  In the statement, we have fixed a specific choice of scaling limit $K(\ve)\rightarrow 0$ as $\ve\rightarrow 0$ for simplicity.  See Theorem~\ref{thm_clt_prob} for a general estimate that holds for arbitrary $K\in\N$ and $\ve\in(0,1)$.

\begin{thm*}[cf.\ Theorem~\ref{thm_clt_prob}, Corollary~\ref{cor_clt_prob}]  Let $T\in(0,\infty)$, let $\{\xi^K\}_{K\in\N}$ be the noise defined in Definition~\ref{def_noise_trig}, let $\alpha_d=(\frac{1}{2(d+2)}\wedge\frac{1}{2d})$ and let $K(\ve) = \lfloor\ve^{-\a_d}\rfloor$ for every $\ve\in(0,1)$, and let $\rho_0\in L^2(\TT^d)$ satisfy $\d\leq\rho_0\leq 1-\d$ for some $\d\in(0,\nicefrac{1}{2})$.  Let $\rho^\ve$ be the solution of \eqref{intro_fd_eq} corresponding to $(\ve,K(\ve))$ in the sense of Definition~\ref{def_sol} below, let $\overline{\rho}$ be the solution of \eqref{intro_heat}, and let $v^\ve = \ve^{-\nicefrac{1}{2}}(\rho^\ve-\overline{\rho})$.  Then,  for every $s>\frac{d}{2}$ there exists $c,\gamma\in(0,\infty)$ such that, for every $\eta\in(0,1)$,
\[\P\Big[\norm{v^\ve - v}_{L^2([0,T];H^{-s}(\TT^d))}\geq\eta\Big] \leq c\eta^{-2}\delta^{-2}\Big(\ve^{\alpha_d}+\ve^{\alpha_d(2s-(d+2))}\Big)+c\delta^{-1}\ve^\frac{\gamma}{2},\]
for $v$ the solution of \eqref{intro_clt} in the sense of Definition~\ref{def_ou} below.
\end{thm*}

The third main result is the identification of an $\ve\rightarrow 0$, $K(\ve)\rightarrow\infty$ scaling regime such that the solutions $\rho^\ve$ of \eqref{intro_fd_eq} satisfy a large deviations principle with rate function \eqref{intro_rate_function}.  The proof of the large deviations principle is based on the weak convergence approach to large deviations of Budhiraja, Dupuis, and Maroulas \cite{BudDupMar2008}, Dupuis and Ellis \cite{DupEll1997}, and Budhiraja and Dupuis \cite{BudDup2019}.

\begin{thm*}[cf.\ Theorem~\ref{thm_new}] Let $T\in(0,\infty)$, let $\{\xi^K\}_{K\in\N}$ be the noise defined in Definition~\ref{def_noise_trig}, let $\{K(\ve)\}_{\ve\in(0,1)}$ be a sequence that satisfies, as $\ve\rightarrow 0$,
\[\ve K(\ve)^{d+2}\rightarrow 0\;\;\textrm{and}\;\;K(\ve)\rightarrow\infty,\]
and for every $\rho_0\in L^2(\TT^d;[0,1])$ let $\rho^\ve(\rho_0)$ be the unique stochastic kinetic solution of \eqref{intro_fd_eq} in the sense of Definition~\ref{def_sol}.  Then, for every $\rho_0\in L^2(\TT^d;[0,1])$ the solutions $\{\rho^\ve(\rho_0)\}_{\ve\in(0,1)}$ satisfy a large deviations principle on $L^2(\TT^d\times[0,T])$ with rate function $I_{\rho_0}$ defined in \eqref{intro_rate_function}.  Furthermore, the solutions $\{\rho^\ve(\rho_0)\}_{\rho_0\in L^2(\TT^d)}$ satisfy a uniform large deviations principle on $L^2(\TT^d\times[0,T])$ with respect to weakly $L^2(\TT^d;[0,1])$-compact subsets of $L^2(\TT^d;[0,1])$. \end{thm*}

\subsection{Comments on Applications and Numerics} 
The accurate computation of rare events for high-dimensional systems is a very important topic in applied sciences, for example in  chemistry and material science.  Interacting particle models like the SSEP provide simplified examples that nevertheless retain many of the features of more realistic models. Often, the number of particles is quite large but the features one is interested in appear on a much coarser scale than the microscopic scale of individual particles. In such situations it is inefficient to use the particle model itself for Monte--Carlo simulations. An SPDE solved on a coarser time-space grid is more effective, but in order to investigate large deviations from the deterministic limit, this SPDE must model the fluctuations correctly on the chosen scale. This is the case for equation \eqref{intro_fd_eq} as far as the large deviations rate functional, i.e. the highest exponential order, is considered, as argued in this paper.
 
It is worth noting that rare events can numerically be computed  to higher accuracy than just the leading exponential order given by the large deviation principle, see e.g.~\cite{VEW12}. This requires frequent updating of the external field which creates the large deviation. Therefore, fast methods to do so are required.

If the deterministic limit path $\bar \rho$ for $\eps\to 0$ is known, then  both \eqref{intro_fd_eq} and \eqref{intro_clt} are candidates for simulating the fluctuations, and, by the results of Section~\ref{sec_clt} below, they have identical fluctuations on the CLT scale. However, as argued above, only \eqref{intro_fd_eq} recovers the correct large deviations behavior.

Before presenting numerical results, let us emphasize a further qualitative difference between the two SPDEs: in the case of additive noise \eqref{intro_clt} the fluctuations are symmetric around $\overline \rho$. Their strength does not depend on the actual excursion away from $\overline\rho$, and the expectation of the fluctuation field integrated against any smooth test function is zero. The situation is different for the nonlinear SPDE \eqref{intro_fd_eq}: if $\overline \rho$ is close to $1,$ say, then excursions above, i.e. such that $\rho^\eps>\overline \rho$ are driven by weaker fluctuations than excursions below, so that the fluctuations are asymmetric. The same effect can be expected for the full particle system because of the exclusion rule: the closer the local density of particles is to $1$, the more jumps are excluded because positions are already filled by particles, and, thus, fluctuations are damped.

In order to numerically demonstrate this effect, we aim to make the local density explore a large range of values, including those close to $0$ and $1$, where fluctuations are expected to be damped, but also those close to $1/2,$ where fluctuations are expected to be maximal. The scenario giving rise to Figure \ref{Figure1} is as follows: the evolution starts from uniformly distributed particles on the unit interval with periodic boundary conditions and the field $H=\sin(2\pi x)$ creates a rare event which consists in many particles gathering near $x=1/4$ and few particles near $x=3/4$.
Following Kipnis and Landim \cite[p~ 258]{KL99}: we modify the jump rates by weighting them with $H$ to obtain the shifted generator
\[(L^H_{N,t}f)(\eta)=N^2\sum_{|x-y|=1}\eta(x)(1-\eta(y))e^{H(y/N)-H(x/N)}(f(\eta^{x,y})-f(\eta)).\]
Due to the difference in the exponential, this leads to the {\em derivative} of $H$ appearing in the exponential change of measure and the deterministic limit equation, see \cite[p.258 and (5.1)]{KL99}.

The SPDEs are solved by a spectral method (Fast Fourier Transform), where the nonlinearity is resolved explicitly. The noise on each of the $N_K=2K-1$ basis functions $\sin(2\pi kx)$ and $\cos(2\pi kx)$ is independent and identically distributed. The parameter $\epsilon$ is as above. The numerical accuracy is determined by $M\gg K,$ the number of nodes computed, and $h,$ the time step.
The following computations are done at time $t=1.$ The solutions are then smoothed by a projection on the span of the first 20 complex Fourier modes. The black curve is the deterministic solution $\overline \rho.$ The second plot shows the fluctuation fields  smoothed by projecting on the span of the first 40 Fourier modes. This is all done with 100 independent realizations for $N=1280$ particles. The last plot (not to scale) is the approximated pointwise expectation of the fluctuation fields, smoothed by taking the sliding average in space over 50 points.

\begin{figure}[h!]
\begin{tabular}{ccc}
\includegraphics[scale=0.33]{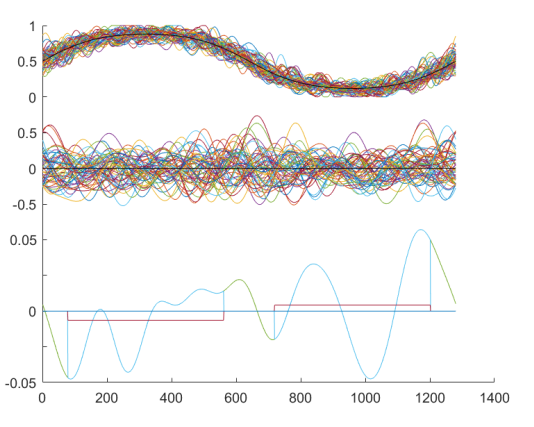}
&\includegraphics[scale=0.33]{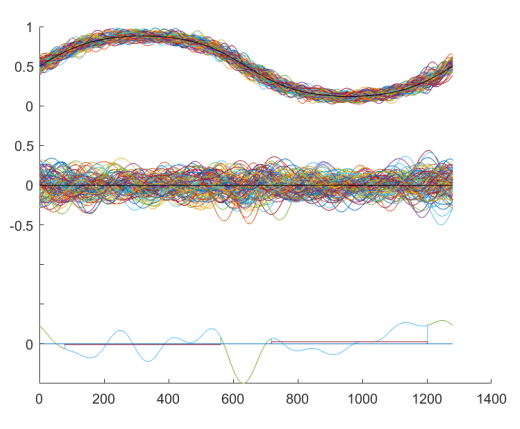}
&\includegraphics[scale=0.33]{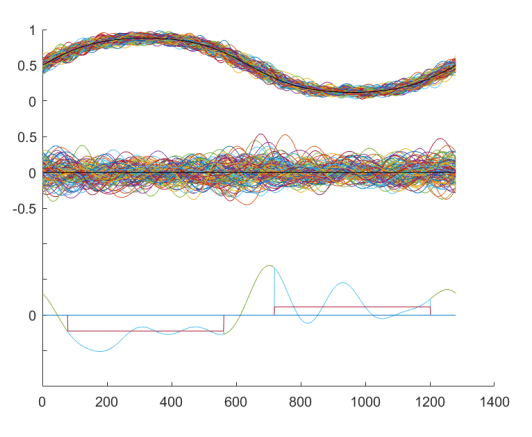}\end{tabular}
\caption{{\small Particles (left), linear SPDE (centre), SPDE  \eqref{intro_fd_eq} (right). First row smoothed output, second row smoothed fluctuations, third row averaged fluctuations.}\label{Figure1}}
\end{figure}

The red line is the spatial average of the fluctuations in the region between the vertical bars, that is, where the density is away from $1/2$, either below or above. We indeed observe an asymmetry of the fluctuations: deviations from the deterministic limit are observed to be more likely pointing towards $1/2$ than away from it.

We next numerically investigate the hypothesis that the qualitative advantage of the nonlinear SPDE \eqref{intro_fd_eq} and the linear SPDE \eqref{intro_clt} also leads to a quantitative advantage. The corresponding findings are reported in Figure \ref{Figure2}. 

The number of nodes with noise is chosen in accordance with
Corollary \ref{cor_clt_prob}, that is, of the order $\eps^{-1/2}$.  The number of gridpoints for the SPDEs is $1280$, the time step is chosen inversely quadratic to the number of gridpoints in order to keep numerical errors low. The time $t=1$ is chosen such that the fluctuations of the initial condition (independent Bernoulli, $p=1/2$) for the particle system do not play a role anymore except for the conserved total mass, which the plot is adjusted for.
The expectation is  approximated by averaging over $100$ independent realizations.

The plot in Figure \ref{Figure2} shows the fluctuation fields applied to a function $\overline \Psi(x),$ which is a smooth approximation of $1_{(0,0.5)}-1_{(0.5,1)},$ i.e. a moment of the fluctuation field
\begin{equation}\label{eq:testfct}
  \eps^{-1/2}{\mathbb E} \int_0^1\overline \Psi(\rho-\overline \rho)dx,
\end{equation}
where $\overline \rho$ is the deterministic solution and $\rho$ is either the particles or the solution of the SPDE. This shows the aforementioned asymmetry.

\begin{figure}[h!]
\includegraphics[scale=0.5]{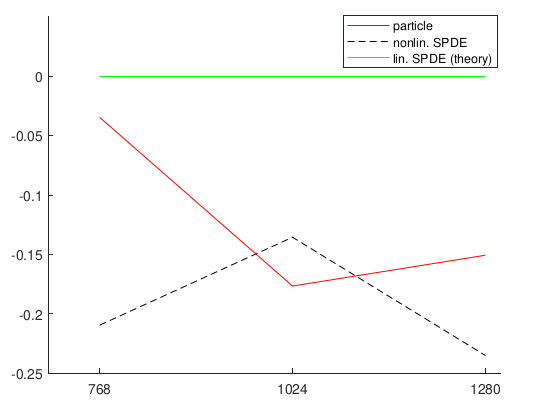}
\caption{\small $x-$axis: $N=1/\epsilon.$ $y$-axis: fluctuations integrated against the specific test function \eqref{eq:testfct}. Red particles, black SPDE \eqref{intro_fd_eq} (multiplicative noise), green SPDE \eqref{intro_clt} (additive noise, theoretical value). \label{Figure2}}
\end{figure}
  
We observe that for $N=1280$ the result for the particle system (red line) is with high probability negative, and this is correctly captured by the nonlinear SPDE \eqref{intro_fd_eq} (black dotted line). In contrast, the theoretical value for the linear SPDE \eqref{intro_clt} is zero, so it cannot capture the law of the particle model at orders beyond the CLT scale.

\subsection{Comments on the literature}

Stochastic PDEs with nonlinear, conservative noise have been studied in the context of stochastic scalar conservation laws by Lions, Perthame, and Souganidis \cite{LPS13-2,LPS13,LPS14}, Friz and Gess \cite{FG16}, and Gess and Souganidis \cite{GS14,GS17}. These methods were later extended to parabolic-hyperbolic equations with conservative noise by Gess and Souganidis \cite{GS16-2}, Fehrman and Gess \cite{FG17,FG21}, and Dareiotis and Gess \cite{DG18}.

Large deviation principles for singular SPDEs have been obtained by Cerrai and Freidlin \cite{CF11}, Faris and Jona-Lasinio \cite{FJL82}, Jona-Lasinio and Mitter \cite{JLM90}, Hairer and Weber \cite{HaiWeb2015}, and Fehrman and Gess \cite{FehGes19}.  In particular, in the context of stochastic Allen-Cahn equations, it was shown in \cite{HaiWeb2015} that renormalization constants may enter into the large deviations rate functional.  The constants are determined by the relative scaling of the noise intensity $\ve$ and ultraviolet cutoff $K$.  In analogy with \cite{FehGes19}, the above results therefore identify a scaling regime in which the large deviations are unaffected by renormalization and for which the solutions of \eqref{intro_fd_eq} correctly simulate the fluctuations of the particle process.  Further applications of the weak convergence approach to proving large deviations principles for SPDE include, for example, Cerrai and Debussche \cite{CD19}, Brze\'zniak, Goldys, and Jegaraj \cite{BGJ17}, and Dong, Wu, Zhang, and Zhang \cite{DWZZ20}.

The inference of the fluctuation correction from data, that is, from observations of the underlying particle system has been studied by Li, Dirr, Embacher, Zimmer, and Reina in \cite{LDEZR2019}.

A central limit theorem for the stochastic heat equation has been obtained by Huang, Nualart, and Viitasaari \cite{HuaNuaVii2020}, Huang, Nualart, Viitasaari, and Zheng \cite{HuaNuaViiZhe2020}, and for parabolic equations with multiplicative noise by Chen, Khoshnevisan, Nualart, and Pu \cite{CheKhoNuaPu2019}.  A central limit theorem and moderate deviations principle has been obtained by Hu, Li, and Wang \cite{HuLiWan2020} and Guo, Zhang, and Zhuo \cite{ZhaZhoGuo2019} for a class of semilinear SPDEs.  The authors are not aware of any prior works proving a central limit theorem for an SPDE with conservative noise. 

After publication of the first version of this article, several groups have investigated the use of conservative SPDEs in the numerical approximation of particle systems: in an intriguing contribution \cite{CF23}, Cornalba and Fischer have shown that a system of independent Brownian motions can be approximated to arbitrary order by a discretization of the Dean-Kawasaki SPDE. A related result has been obtained by Djurdjevac, Kremp, and Perkowski in \cite{DKP22}. Subsequently, Cornalba, Fischer, Ingmanns, and Raithel in \cite{CFIR23} have extended their theory to weakly interacting particles.

The fluctuations of the symmetric simple exclusion process about its hydrodynamic limit have been studied by Calves, Kipnis, and Spohn \cite{CalKipSpo}, Ravishankar \cite{Rav1992}, Kipnis and Varadhan \cite{KipVar1986}, Ferrari, Presutti, Scacciatelli, and Vares \cite{FerPresScacVar1991,FerPresScaVar19912}, and Rezakhanlou \cite{Rez1994}.  Tagged particles in symmetric simple exclusion processes have been studied by Arratia \cite{Arr1983}, Sethuraman, Varadhan, and Yau \cite{SetVarYau2000}, Varadhan \cite{Var1995}, and Jara and Landim \cite{JarLan2006}.  Large deviations principles for a general class of symmetric simple exclusion processes were obtained by Quastel, Rezakhanlou, and Varadhan \cite{QuaRezVar1999}.

\subsection{Organization of the paper}   In Section~\ref{sec_spde_wp}, we prove the well-posedness of stochastic kinetic solutions to \eqref{intro_fd_eq}.  Section~\ref{sec_noise} defines the noise, Section~\ref{sec_rks} defines stochastic kinetic solutions and proves that they are unique, and Section~\ref{sec_rks_exist} proves the existence of solutions.  We prove the central limit theorem in Section~\ref{sec_clt}, and we prove the large deviations principle in Section~\ref{sec_old}.

\section{The well-posedness of the SPDE}\label{sec_spde_wp}

We will now establish the existence and uniqueness of suitably defined renormalized kinetic solutions to equation \eqref{intro_fd_eq}.  The section is split into three subsections.  The first defines the randomness in the equation, the second defines stochastic kinetic solutions in Definition~\ref{def_sol} and proves that they are unique, and the third proves that they exist.

\subsection{The definition of the noise}\label{sec_noise}  We will first introduce the randomness in the equation
\begin{equation}\label{spde_1}\partial_t\rho = \Delta\rho - \nabla\cdot(\sqrt{\rho(1-\rho)}\circ \dd\xi^F)\;\;\textrm{in}\;\;\TT^d\times(0,\infty)\;\;\textrm{with}\;\;\rho(\cdot,0) = \rho_0.\end{equation}
We will assume that the noise $\xi^F$ is colored in space and white in time, and that it is adapted to some filtration $\F_t$, and that the initial data is $\F_0$-measurable.

\begin{assumption}\label{def_noise}  Let $(\O,\F,\P)$ be a probability space, let $(\F_t)_{t\in[0,\infty)}$ be a filtration on $\O$, let $\{B^k\}_{k\in\N}$ be independent $\F_t$-adapted $d$-dimensional Brownian motions on $\O$, and let $\{f_k\colon\TT^d\rightarrow\R\}_{k\in\N}$ be twice continuously differentiable functions on $\TT^d$.  We define $\xi^F$ to be the noise $\xi^F(x,t) = \sum_{k=1}^\infty f_k(x) B^k_t$ and assume that the sums $F_1 = \sum_{k=1}^\infty f_k^2$ and $F_3 = \sum_{k=1}^\infty \abs{\nabla f_k}^2$ are finite and continuous on $\TT^d$ and that the noise is probabilistically stationary in the sense that $F_1$ is constant on $\TT^d$.  We also assume that $\rho_0\in L^2(\O;L^2(\TT^d;[0,1]))$ is $\F_0$-measurable.  \end{assumption}

\subsection{Uniqueness of stochastic kinetic solutions}\label{sec_rks}  We will now explain how the methods of the second two authors \cite{FG21} can be adapted to prove the well-posedness \eqref{spde_1}, which we now write in the It\^o formulation
\begin{equation}\label{spde_2}\partial_t\rho  = \Delta\rho - \nabla\cdot(\sqrt{\rho(1-\rho)}\dd\xi^F)+\frac{F_1}{8}\nabla\cdot\Big(\frac{(1-2\rho)^2}{\rho(1-\rho)}\nabla\rho\Big).\end{equation}
There are two essential difficulties in treating \eqref{spde_2}.  The first is the singularity of the noise coefficient $\sqrt{\rho(1-\rho)}$.  It is only $\nicefrac{1}{2}$-H\"older continuous on $[0,1]$ and its derivative diverges algebraically on the sets $\{\rho\simeq 0\}$ and $\{\rho\simeq 1\}$.  The second is the singularity in the Stratonovich-to-It\^o correction, which is defined by
\[\nabla\cdot\Big(\frac{(1-2\rho)^2}{\rho(1-\rho)}\nabla\rho\Big) = \Delta\Theta(\rho)\;\textrm{for $\Theta$ satisfying $\Theta(\nicefrac{1}{2})=0$ and}\;\Theta'(\xi) = \frac{(1-2\xi)^2}{\xi(1-\xi)},\]
and which diverges logarithmically near $\xi=0$ and $\xi=1$.  In particular, since for a solution $\rho$ of \eqref{spde_2} the composition $\Theta(\rho)$ is not known to be integrable, there is not even an obvious concept of weak solution for \eqref{spde_2}.

It is for these reasons that it is necessary to develop a renormalized solution theory.  The theory is based on localizing the solution away from the sets $\{\rho\simeq 0\}$ and $\{\rho \simeq 1\}$, and will require only local regularity of the solution away from these sets.  The methods are based on the equation's kinetic formulation.  Precisely, if $S\colon\R\rightarrow\R$ is a convex function, then it follows from It\^o's formula and the probabilistic stationarity of the noise that the composition $S(\rho)$ formally satisfies the equation
\begin{align}\label{spde_3}
\partial_tS(\rho) & = \Delta S(\rho) -S'(\rho)\nabla\cdot(\sqrt{\rho(1-\rho)}\dd\xi^F)+\frac{F_1}{8}\nabla\cdot\Big(\frac{(1-2\rho)^2}{\rho(1-\rho)}\nabla S(\rho)\Big)
\\ \nonumber & \quad -S''(\rho)\abs{\nabla\rho}^2  +\frac{1}{2}S''(\rho) F_3\rho(1-\rho),
\end{align}
where, in general, $S(\rho)$ will only satisfy the \emph{entropy inequality} that the lefthand side is less than or equal to the righthand side.  The kinetic formulation replaces this ensemble of equations by a single equation with respect to an additional velocity variable $\xi\in\R$.

The kinetic function $\chi\colon\TT^d\times\R\times[0,T]\rightarrow\{-1,0,1\}$ of $\rho$ is defined by
\[\chi(x,\xi,t) = \mathbf{1}_{\{0<\xi<\rho(x,t)\}}-\mathbf{1}_{\{\rho(x,t)<\xi<0\}},\]
and using the equality that $S(\rho) = \int_{\R}S'(\xi)\chi \dxi$, the ``test function'' $S'(\xi)$ can be factored out \eqref{spde_3} and the kinetic function is formally a distributional solution of the equation
\begin{align*}
\partial_t\chi & = \Delta_x\chi - \delta_0(\xi-\rho)\nabla\cdot(\sqrt{\rho(1-\rho)}\dd\xi^F)+\frac{F_1(1-2\xi)^2}{8\xi(1-\xi)}\Delta_x\chi
\\ & \quad + \partial_\xi\Big(\delta_0(\xi-\rho)\abs{\nabla\rho}^2\Big)-\frac{F_3}{2}\partial_\xi\Big(\delta_0(\xi-\rho)\xi(1-\xi)\Big),
\end{align*}
for the one-dimensional Dirac delta distribution $\delta_0$ at zero.  However, as in the case of the entropy inequality above, the kinetic function will not in general satisfy this equality exactly.  Rather, there will $\P$-a.s.\ exist a nonnegative parabolic defect measure $q$ on $\TT^d\times\R\times[0,T]$ satisfying
\[\delta_0(\xi-\rho)\abs{\nabla\rho}^2\leq q\;\textrm{on}\;\TT^d\times\R\times[0,T],\]
such that the kinetic function is a distributional solution of the equation
\begin{align*}
\partial_t\chi & = \Delta_x\chi - \delta_0(\xi-\rho)\nabla\cdot(\sqrt{\rho(1-\rho)}\dd\xi^F)+\frac{F_1(1-2\xi)^2}{8\xi(1-\xi)}\Delta_x\chi
\\ & \quad +\partial_\xi q-\frac{F_3}{2}\partial_\xi\Big(\delta_0(\xi-\rho)\xi(1-\xi)\Big).
\end{align*}
The parabolic defect measure $q$ quantifies the entropy inequality exactly, and allows to consider test functions that are not convex in the velocity variable.  This point is essential for the solution theory developed in this work, since here it is necessary to localize the solution compactly in the interval $(0,1)$.  We define a stochastic kinetic solution in Definition~\ref{def_sol} below and prove that such solutions are unique and satisfy a pathwise $L^1$-contraction in Theorem~\ref{thm_rks_unique}.

\begin{definition} Let $(\O,\F,\P)$ be a probability space and let $(\F_t)_{t\in[0,\infty)}$ be a filtration on $(\O,\F)$.  A \emph{kinetic measure} is a measurable map $q$ from $\O$ to the space of nonnegative, finite Radon measures on $\TT^d\times[0,1]\times[0,T]$ such that, for every $\psi\in \C^\infty(\TT^d\times [0,1])$, the process
\[(\omega,t)\in \O\times[0,T]\rightarrow \int_0^t\int_{\TT^d}\int_\R \psi(x,\xi) \dd q(\omega)\;\;\textrm{is $\F_t$-predictable.}\]
\end{definition}

\begin{remark}  In what follows, we will often encounter derivatives of functions $\psi\in\C^\infty_c(\TT^d\times \R)$ evaluated at $\xi = \rho(x,t)$.  We will write
\[(\nabla\psi)(x,\rho(x,t)) = \nabla\psi(x,\xi)|_{\xi = \rho(x,t)},\]
for the gradient of $\psi$ evaluated at $(x,\rho(x,t))$ as opposed to the gradient of the full composition. \end{remark}

\begin{remark} If $G\in L^2([0,T]\times\O;L^2(\TT^d))^d$ is an $\F_t$-predictable process, we will write $\int_0^T\int_{\TT^d}G\cdot\xi^F$ for the stochastic integral understood in the It\^o-sense.  Due to the $L^2$-integrability and $\F_t$-predictability of $G$, this is a well-defined random variables.  We will use the notation $\int_0^TG\circ\xi^F$ to denote the corresponding Stratonovich integral.  \end{remark}

\begin{definition}\label{def_sol}  Let $T\in(0,\infty)$ and let $\xi^F$ and $\rho_0\in L^2(\O;L^2(\TT^d;[0,1]))$ satisfy Assumption~\ref{def_noise}.  A \emph{stochastic kinetic solution} of \eqref{spde_1} is a continuous $L^2(\TT^d;[0,1])$-valued, $\F_t$-adapted process $\rho\in L^2(\O\times[0,T];L^2(\TT^d))$ that satisfies the following two properties.

\begin{enumerate}
\item \emph{Preservation of mass}:  $\P$-a.s.\ for every $t\in[0,T]$,
\[\norm{\rho(\cdot,t)}_{L^1(\TT^d)}=\norm{\rho_0}_{L^1(\TT^d)}.\]
\item \emph{Local regularity}:  for every $\d\in(0,\nicefrac{1}{2})$,
\[(\delta \vee \rho )\wedge (1-\delta)\in L^2(\O\times[0,T];H^1(\TT^d)).\]
\end{enumerate}

Furthermore, there exists a nonnegative \emph{parabolic defect measure} $q$ that $\P$-a.s.\ satisfies the following three properties.

\begin{enumerate}\setcounter{enumi}{2}
\item \emph{The entropy condition}:  $\P$-a.s.\ as measures on $\TT^d\times\R\times[0,T]$,
\[\delta_0(\xi-\rho)\abs{\nabla\rho}^2\leq q.\]
\item \emph{Optimal regularity}:  the measure $\mu$ defined by
\[\dd\mu = (\xi(1-\xi))^{-1} \dd q\;\textrm{is finite on $\TT^d\times(0,1)\times[0,T]$.}\]
\item \emph{The equation}:  for the kinetic function $\chi$ of $\rho$, for every $\psi\in\C^\infty_c(\TT^d\times(0,1))$ and $t\in[0,T]$,
\begin{align*}
& \int_{\TT^d}\int_\R \chi(x,\xi,t)\psi(x,\xi) = \int_{\TT^d}\int_\R \overline{\chi}(\rho_0)\psi(x,\xi)-\int_0^t\int_{\TT^d} \nabla\rho \cdot (\nabla\psi)(x,\rho)
\\ & -\int_0^t\int_{\TT^d} \nabla\cdot\Big(\sqrt{\rho(1-\rho)}\dd\xi^F\Big)\psi(x,\rho)-\frac{F_1}{8}\int_0^t\int_{\TT^d} \frac{(1-2\rho)^2}{\rho(1-\rho)}\nabla\rho\cdot(\nabla\psi)(x,\rho)
\\ & -\int_0^t\int_{\TT^d}\int_\R \partial_\xi \psi(x,\xi) \dd q+\frac{1}{2}\int_0^t\int_{\TT^d} F_3 (\partial_\xi\psi)(x,\rho)\rho(1-\rho).
\end{align*}
\end{enumerate}
\end{definition}

\begin{thm}\label{thm_rks_unique}  Let $T\in(0,\infty)$ and let $\xi^F$ and $\rho_{0,1},\rho_{0,2}\in L^2(\O;L^2(\TT^d;[0,1]))$ satisfy Assumption~\ref{def_noise}.  If $\rho_1,\rho_2$ are pathwise kinetic solutions of \eqref{spde_1} in the sense of Definition~\ref{def_sol} then, $\P$-a.s.,
\[\max_{t\in[0,T]}\norm{\rho_1(\cdot,t)-\rho_2(\cdot,t)}_{L^1(\TT^d)}\leq \norm{\rho_{0,1}-\rho_{0,2}}_{L^1(\TT^d)}.\]
\end{thm}

\begin{proof}  For every $i\in\{1,2\}$ let $\chi_i$ denote the kinetic function of $\rho_i$ and let $q_i$ denote the corresponding kinetic measure defined in Definition~\ref{def_sol}.  For every $\ve\in(0,1)$ let $\kappa^\ve$ denote a standard convolution kernel of scale $\ve\in(0,1)$ on $\TT^d$, for every $\d\in(0,1)$ let $\kappa^\d$ denote a standard convolution kernel of scale $\d\in(0,1)$ on $\R$, and let $\kappa^{\ve,\d}(x,y,\xi,\eta)=\kappa^\ve(x-y)\kappa^\d(\xi-\eta)$.  It is furthermore necessary to introduce a cutoff in the velocity variable.   For every $\beta\in(0,\nicefrac{1}{4})$ let $\zeta_\beta\colon\R\rightarrow[0,1]$ be a smooth function satisfying $\zeta_\beta(\xi) = 0$ if $\xi\leq \beta$ or $\xi\geq 1-\beta$, $\zeta_\beta(\xi) = 1$ if $\xi\in[2\beta, 1-2\beta]$, and $\abs{\zeta_\beta'(\xi)}\leq\nicefrac{c}{\beta(1-\beta)}$ for some $c\in(0,\infty)$ independent of $\beta$.

We write $\chi_{i,t}(x,\xi)=\chi_i(x,\xi,t)$ and make a similar convention for all other time-dependent quantities and we define the convolution $\chi^{\ve,\d}_{i,t} = (\chi_{i,t}*\kappa^{\ve,\d})$.  The proof of uniqueness is based on the equality
\[\int_{\TT^d}\abs{\rho_1(x,t)-\rho_2(x,t)}\dx = \int_{\TT^d}\int_\R(\chi_1+\chi_2-2\chi_1\chi_2)\dx\dxi,\]
which we approximate, for every $\beta\in(0,\nicefrac{1}{4})$, $\ve\in(0,1)$, and $\delta\in(0,\nicefrac{\beta}{2})$ by the quantity
\begin{equation}\label{u_1} \int_{\TT^d}\int_\R (\chi^{\ve,\d}_{1,t}(y,\eta)+\chi^{\ve,\d}_{2,t}(y,\eta)-2\chi^{\ve,\d}_{1,t}(y,\eta)\chi^{\ve,\d}_{2,t}(y,\eta))\zeta_\beta(\eta) \dy\deta.\end{equation}
In view of definition~\ref{def_sol}, the cutoff $\zeta_\beta$ and the restriction $\delta\in(0,\nicefrac{\beta}{2})$ guarantees that on the support of $\zeta_\beta$ the function $\kappa^{\ve,\d}(\cdot-y,\cdot-\eta)$ is an admissible test function.  We will differentiate the three terms on the righthand side of \eqref{u_1} individually.

\textbf{The singletons}.  For the first two terms on the righthand side of \eqref{u_1}, it follows from the equation and the symmetry of the convolution kernel that, for every $\eta$ in the support of $\zeta_\beta$, $\delta\in(0,\nicefrac{\beta}{2})$, and $y\in\TT^d$, for $\overline{\kappa}^{\ve,\d}_{i,t}(x,y,\eta) = \kappa^{\ve,\d}(x,y,\rho_i(x,t),\eta)$,
\begin{align}\label{u_2}
\dd \chi^{\ve,\d}_{i,t}(y,\eta) & = \nabla_y\cdot (\nabla\rho_i*\overline{\kappa}^{\ve,\d}_{i,t})-(\overline{\kappa}^{\ve,\d}_{i,t}*\nabla\cdot(\sqrt{\rho_i(1-\rho_i)}\dd\xi^F))
\\ \nonumber & \quad +\frac{F_1}{8}\nabla_y\cdot \Big(\frac{(1-2\rho_i)^2}{\rho_i(1-\rho_i)}\nabla\rho_i*\overline{\kappa}^{\ve,\d}_{i,t}\Big)+\partial_\eta (\kappa^{\ve,\d}*q_{i,t})-\partial_\eta(F_3\rho_i(1-\rho_i)*\overline{\kappa}^{\ve,\d}_{i,t}).
\end{align}
Therefore, using \eqref{u_2} and integrating by parts, for every $i\in\{1,2\}$,
\begin{equation}\label{u_3}  \dd \int_{\TT^d}\int_\R \chi^{\ve,\d}_{i,t}(y,\eta)\zeta_\beta(\eta)\dy\deta  = \dd I^{\textrm{mart}}_{i,t}+\dd I^{\textrm{cut}}_{i,t}, \end{equation}
for the martingale term
\[ \dd I^{\textrm{mart}}_{i,t} = -\int_{\TT^d}\int_{\R}\big(\overline{\kappa}^{\ve,\d}_{i,t}*\nabla\cdot(\sqrt{\rho_i(x,t)(1-\rho_i(x,t))}\dd\xi^F)\big)\zeta_\beta,\]
and for the cutoff term
\begin{align*}
\dd I^{\textrm{cut}}_{i,t} & = -\int_{\TT^d}\int_{\R}(\kappa^{\ve,\d}*q_{i,t})\zeta_\beta'+\frac{1}{2}\int_{(\TT^d)^2}\int_\R \big(F_3\rho_i)(1-\rho_i)*\overline{\kappa}^{\ve,\d}_{i,t}\big)\zeta_\beta',
\end{align*}
which completes the initial analysis of the first two terms on the righthand side of \eqref{u_1}.

\textbf{The mixed term}.  For the mixed term on the righthand side of \eqref{u_1}, we observe using the distributional equalities
\[\nabla_x\chi_i(x,\xi,t) = \delta_0(\xi-\rho_i(x,t))\nabla\rho_i\;\textrm{and}\;\partial_\xi \chi_i(x,\xi,t) = \delta_0(\xi)-\delta_0(\xi-\rho_i(x,t)),\]
the stochastic product rule, and $\delta\in(0,\nicefrac{\beta}{2})$ that
\begin{equation}\label{u_4}\textrm{d}\Big(\int_{\TT^d}\int_\R\chi^{\ve,\d}_{1,t}(y,\eta)\chi^{\ve,\d}_{2,t}(y,\eta)\zeta_\beta(\eta)\Big)= \dd I^{\textrm{meas}}_{t}+\dd I^{\textrm{mart}}_{\textrm{mix},t}+\dd I^{\textrm{cor}}_{t}+\dd I^{\textrm{cut}}_{\textrm{mix},t},\end{equation}
for the measure term
\begin{align*}
\dd I^{\textrm{meas}}_{t} & = -2 \int_{\TT^d}\int_\R (\nabla\rho_1*\kappa^{\ve,\d}_{1,t})(\nabla\rho^2*\kappa^{\ve,\d}_{2,t})\zeta_\beta
\\ & \quad +\int_{(\TT^d)^2}\int_{\R}(\kappa^{\ve,\d}*q_{1,t})(y,\eta)\overline{\kappa}^{\ve,\d}_{2,t}(x,y,\eta)\zeta_\beta(\eta)
\\ & \quad +\int_{(\TT^d)^2}\int_{\R}(\kappa^{\ve,\d}*q_{2,t})(y,\eta)\overline{\kappa}^{\ve,\d}_{1,t}(x,y,\eta)\zeta_\beta(\eta),
\end{align*}
for the martingale term
\begin{align*}
\dd I^{\textrm{mart}}_{\textrm{mix},t} & = -\int_{\TT^d}\int_\R \chi^{\ve,\d}_{2,t}\big(\overline{\kappa}^{\ve,\d}_{1,t}*\nabla\cdot\big(\sqrt{\rho_1(1-\rho_1)}\dd \xi^F\big)\big)\zeta_\beta
\\ & \quad -\int_{\TT^d}\int_\R \chi^{\ve,\d}_{1,t} \big(\overline{\kappa}^{\ve,\d}_{2,t}*\nabla_{x'}\cdot\big(\sqrt{\rho_2(1-\rho_2)}\dd\xi^F\big)\big)\zeta_\beta,
\end{align*}
for the correction term, writing $(x,\xi)$ for the convolution variables of $\rho_1$ and all related quantities and writing $(x',\xi')$ for the convolution variables of $\rho_2$,
\begin{align*}
& \dd I^{\textrm{cor}}_{t} = -\frac{F_1}{8}\int_{(\TT^d)^3}\int_\R \zeta_\beta\overline{\kappa}^{\ve,\d}_{1,t}(x,y,\eta)\overline{\kappa}^{\ve,\d}_{2,t}(x',y,\eta)\Big(\frac{(1-2\rho_1)^2}{\rho_1(1-\rho_1)}+\frac{(1-2\rho_2)^2}{\rho_2(1-\rho_2)}\Big)\nabla\rho_1\cdot\nabla\rho_2
\\ & + \frac{1}{4}\int_{(\TT^d)^3}\int_\R \zeta_\beta\overline{\kappa}^{\ve,\d}_{1,t}(x',y,\eta)\overline{\kappa}^{\ve,\d}_{2,t}(x,y,\eta)\Big(\sum_{k=1}^\infty f_k(x)f_k(x')\Big) \frac{(1-2\rho_1)(1-2\rho_2)}{\sqrt{\rho_1(1-\rho_1)}\sqrt{\rho_2(1-\rho_2)} }\nabla\rho_1\cdot\nabla\rho_2
\\ & -\frac{1}{2}\int_{(\TT^d)^3}\int_\R \zeta_\beta\overline{\kappa}^{\ve,\d}_{1,t}(x,y,\eta)\overline{\kappa}^{\ve,\d}_{2,t}(x',y,\eta)F_3(x)\rho_1(1-\rho_1)
\\ & -\frac{1}{2}\int_{(\TT^d)^3}\int_\R\zeta_\beta\overline{\kappa}^{\ve,\d}_{2,t}(x',y,\eta)\overline{\kappa}^{\ve,\d}_{1,t}(x,y,\eta)F_3(x')\rho_2(1-\rho_2)
\\ & +\int_{(\TT^d)^3}{\int_\R}\zeta_\beta\overline{\kappa}^{\ve,\d}_{1,t}(x',y,\eta)\overline{\kappa}^{\ve,\d}_{2,t}(x,y,\eta)\Big(\sum_{k=1}^\infty \nabla f_k(x)\cdot\nabla f_k(x')\Big)\sqrt{\rho_1(1-\rho_1)}\sqrt{\rho_2(1-\rho_2)},
\end{align*}
and for the cutoff term
\begin{align*}
\dd I^{\textrm{cut}}_{\textrm{mix},t} &  = -\int_{\TT^d}\int_{\R} (\kappa^{\ve,\d}*q_{1,t})\chi^{\ve,\d}_{2,t} \zeta_\beta' - \int_{\TT^d}\int_{\R} (\kappa^{\ve,\d}*q_{2,t})\chi^{\ve,\d}_{1,t} \zeta_\beta'
\\ & \quad +\frac{1}{2}\int_{\TT^d}\int_\R (\overline{\kappa}^{\ve,\d}_{1,t}*F_3\rho_1(1-\rho_1))\chi^{\ve,\d}_{2,t}\zeta_\beta'+\frac{1}{2}\int_{\TT^d}\int_\R (\overline{\kappa}^{\ve,\d}_{2,t}*F_3\rho_2(1-\rho_2))\chi^{\ve,\d}_{1,t}\zeta_\beta'.
\end{align*}
This completes the initial analysis of the mixed term.  Returning to \eqref{u_1}, we have from \eqref{u_3} and \eqref{u_4} that
\begin{equation}\label{u_5} \dd\Big( \int_{\TT^d}\int_\R (\chi^{\ve,\d}_{1,t}+\chi^{\ve,\d}_{2,t}-2\chi^{\ve,\d}_{1,t}\chi^{\ve,\d}_{2,t})\zeta_\beta\Big) = -2\dd I^{\textrm{meas}}_t+\dd I^{\textrm{mart}}_t-2\dd I^{\textrm{cor}}_t+\dd I^{\textrm{cut}}_t,\end{equation}
for the cutoff and martingale terms
\[\dd I^{\textrm{cut}}_t = \dd I^{\textrm{cut}}_{1,t}+\dd I^{\textrm{cut}}_{2,t} - 2\dd I^{\textrm{cut}}_{\textrm{mix},t}\;\textrm{and}\;\dd I^{\textrm{mart}}_t = \dd I^{\textrm{mart}}_{1,t}+\dd I^{\textrm{mart}}_{2,t} - 2\dd I^{\textrm{mart}}_{\textrm{mix},t}.\]
We will handle each of the four terms on the righthand side of \eqref{u_5} separately.

\textbf{The measure term}.  It is an immediate consequence of H\"older's inequality and the entropy condition of Definition~\ref{def_sol} that, $\P$-a.s.,
\begin{equation}\label{u_6} I^{\textrm{meas}}_t= \int_0^t\dd I^{\textrm{meas}}_{s}\geq 0.\end{equation}
\textbf{The martingale term}.  The analysis of the martingale term will use the following fact:  if $F^\ve_t,F\in L^2(\O\times[0,T])$ are $\F_t$-progressively measurable processes that satisfy that, as $\ve\rightarrow 0$, $F^\ve_t\rightarrow F$ in $L^2(\O\times[0,T])$, then along a subsequence $\ve\rightarrow 0$ we have that, $\P$-a.s.\ for every $t\in[0,T]$,
\begin{equation}\label{u_7}\lim_{\ve\rightarrow 0}\int_0^tF^\ve_s\dd B_s = \int_0^t F_s\dd B_s,\end{equation}
which follows from the Burkh\"older--Davis--Gundy inequality (see, for example, \cite[Chapter~4, Theorem~4.1]{RevYor1999}).  It follows from the local $H^1$-regularity of the solutions, the definition of the convolutions, and \eqref{u_7} that, $\P$-a.s.\ along a subsequence $\ve\rightarrow 0$, for $I^{\textrm{mart}}_t = \int_0^t\dd I^{\textrm{mart}}_s$,
\begin{align*}
\lim_{\ve\rightarrow 0}  I^{\textrm{mart}}_t & = \int_0^t\int_\R\int_{\TT^d}\overline{\kappa}^\d_{s,1}(2\chi^\d_{s,2}-1)\zeta_\beta(\eta)\nabla\cdot(\sqrt{\rho_1(1-\rho_1)}\dd\xi^F)\dy\deta
\\  & \quad + \int_0^t\int_\R\int_{\TT^d}\overline{\kappa}^\d_{s,2}(2\chi^\d_{s,1}-1)\zeta_\beta(\eta)\nabla\cdot(\sqrt{\rho_2(1-\rho_2)}\dd\xi^F)\dy\deta,
\end{align*}
for $\overline{\kappa}^\d_{s,i}(y,\eta)=\kappa^\d_1(\eta-\rho_i(y,t))$ and for $\chi^\d_{s,i}(y,\eta)=(\chi^i_s(y,\cdot)*\kappa^\d_1)(\eta)$.  It follows from the local $H^1$-regularity of the $\rho_i$ and the dominated convergence theorem that, after passing to a subsequence $\d\rightarrow 0$, $\P$-a.s.\ for every $t\in[0,T]$,
\begin{equation}\label{u_09}\lim_{\d\rightarrow 0}\abs{\int_0^t\int_\R\int_{\TT^d}\overline{\kappa}^\d_{s,1}(2\chi^\d_{s,2}-1)(\zeta_\beta(\eta)-\zeta_\beta(\rho_1))\nabla\cdot(\sqrt{\rho_1(1-\rho_1)}\dd\xi^F)}=0,\end{equation}
and similarly for the symmetric term obtained by swapping the roles of $\rho_1$ and $\rho_2$.  It then follows from an explicit calculation that, whenever $2\d<\rho_2(y,s)$,
\[ \int_{\R^2}\kappa^\d(\xi-\eta)\kappa^\d(\eta-\xi')\chi^2_s(y,\xi')\deta\dxip =\left\{\begin{aligned} & 0 && \textrm{if}\;\;\xi\leq -2\d\;\;\textrm{or}\;\;\xi\geq \rho_2(y,s)+2\d, \\ & \nicefrac{1}{2} && \textrm{if}\;\;\xi = 0\;\;\textrm{or}\;\;\xi = \rho_2(y,s), \\ & 1 && \textrm{if}\;\;2\d<\xi<\rho_2(y,s)-2\d.\end{aligned}\right.\]
Then, using $\zeta_\beta(0)=0$, we have for every $(y,\eta)\in\TT^d\times\R$ that
\begin{equation}\label{u_9}\lim_{\d\rightarrow 0}\Big(\int_\R\overline{\kappa}^\d_{s,1}(2\chi^\d_{s,2}-1)\deta\Big)\zeta_\beta(\rho_1) =  \Big(\mathbf{1}_{\{\rho_1=\rho_2\}}+2\mathbf{1}_{\{0\leq \rho_1<\rho_2\}}-1\Big)\zeta_\beta(\rho_1),\end{equation}
and similarly for the symmetric term obtained by swapping the roles of $\rho_1$ and $\rho_2$.  In combination \eqref{u_09}, \eqref{u_9}, and the equality $\sgn(\rho_2-\rho_1)=\mathbf{1}_{\{\rho_1=\rho_2\}}+2\mathbf{1}_{\{\rho_1<\rho_2\}}-1$ prove that, $\P$-a.s.\ there exist random subsequences $\ve,\d\rightarrow 0$ such that
\begin{align}\label{u_10}
\lim_{\d,\ve\rightarrow 0}\Big(I^\textrm{mart}_t\Big)  & =  \int_0^t\int_{\TT^d}\sgn(\rho_2-\rho_1)\zeta_\beta(\rho_1)\nabla\cdot(\sqrt{\rho_1(1-\rho_1)}\dd\xi^F)
\\ \nonumber &\quad - \int_0^t\int_{\TT^d}\sgn(\rho_2-\rho_1)\zeta_\beta(\rho_2)\nabla\cdot(\sqrt{\rho_2(1-\rho_2)}\dd\xi^F),
\end{align}
where it is not necessary to define $\sgn(0)$ since by Stampacchia's lemma (see Evans \cite[Chapter~5, Exercises~17,18]{Eva2010}), almost everywhere on the set $\{\rho_1=\rho_2\}$,
\[\Big(\zeta_\beta(\rho_1)\nabla\cdot(\sqrt{\rho_1(1-\rho_1)}\dd\xi^F)-\zeta_\beta(\rho_2)\nabla\cdot(\sqrt{\rho_2(1-\rho_2)}\dd\xi^F)\Big)=0.\]
For every $\beta\in(0,\nicefrac{1}{4})$ let $\Theta_\beta\colon[0,\infty)\rightarrow[0,\infty)$ satisfy $\Theta_\beta(0)=0$ and $\Theta_\beta'(\xi)=\zeta_\beta(\xi)\partial_\xi(\sqrt{\xi(1-\xi)})$.  Returning to \eqref{u_10}, we have that
\begin{align}\label{u_11}
& \lim_{\d,\ve\rightarrow 0}\Big(I^\textrm{mart}_t\Big)  =  \int_0^t\int_{\TT^d}\sgn(\rho_2-\rho_1)\nabla\cdot\Big(\Big(\Theta_\beta(\rho_1)-\Theta_\beta(\rho_2)\Big)\dd\xi^F\Big)
\\ \nonumber &  \quad +\int_0^t\int_{\TT^d}\sgn(\rho_2-\rho_1)\Big(\zeta_\beta(\rho_1)\sqrt{\rho_1(1-\rho_1)}-\Theta_\beta(\rho_1)\Big)\nabla\cdot\dd\xi^F
\\ \nonumber & \quad - \int_0^t\int_{\TT^d}\sgn(\rho_2-\rho_1)\Big(\zeta_\beta(\rho_2)\sqrt{\rho_2(1-\rho_2)}-\Theta_\beta(\rho_2)\Big)\nabla\cdot\dd\xi^F.
\end{align}
For the first term on the righthand side of \eqref{u_11}, for the convolution $\sgn^\d=(\sgn*\kappa^\d_1)$ for every $\d\in(0,1)$, we have using \eqref{u_7} that, $\P$-a.s.\ along a random subsequence $\d\rightarrow 0$, for every $t\in[0,T]$,
\begin{align}\label{u_12}
& \int_0^t\int_{\TT^d}\sgn(\rho_2-\rho_1)\nabla\cdot\Big(\Big(\Theta_\beta(\rho_1)-\Theta_\beta(\rho_2)\Big)\dd\xi^F\Big)
 \\ \nonumber   = & \lim_{\d\rightarrow 0}  \int_0^t\int_{\TT^d}\sgn^\d(\rho_1-\rho_2)\nabla\cdot\Big(\Big(\Theta_\beta(\rho_1)-\Theta_\beta(\rho_2)\Big)\dd\xi^F\Big)
\\ \nonumber  = & -\lim_{\d\rightarrow 0} \int_0^T\int_{\TT^d}(\sgn^\d)'(\rho_1-\rho_2)\Big(\Theta_\beta(\rho_1)-\Theta_\beta(\rho_2)\Big)(\nabla\rho_1-\nabla\rho_2)\cdot \dd\xi^F.
\end{align}
It follows from the Lipschitz continuity of $\Theta_\beta$ that there exists $c\in(0,\infty)$ independent of $\d\in(0,1)$ but depending on $\beta$ such that
\begin{equation}\label{u_14}\abs{(\sgn^\d)'(\rho_1-\rho_2)\big(\Theta_\beta(\rho_1)-\Theta_\beta(\rho_2)\big)} \leq c\mathbf{1}_{\{0<\abs{\rho_1-\rho_2}<c\d\}}.\end{equation}
We then have using the local $H^1$-regularity of the solutions, the dominated convergence theorem, \eqref{u_12}, and \eqref{u_14} that, $\P$-a.s.\ for every $t\in[0,T]$,
\begin{equation}\label{u_16}  \int_0^t\int_{\TT^d}\sgn(\rho_2-\rho_1)\nabla\cdot\Big(\Big(\Theta_{\beta,M}(\rho_1)-\Theta_{\beta,M}(\rho_2)\Big)\dd\xi^F\Big)=0.\end{equation}
For the the second two terms on the righthand side of \eqref{u_11}, it follows from the definition of $\Theta_\beta$ and the dominated convergence theorem that, for every $i\in\{1,2\}$,
\[\lim_{\beta\rightarrow 0}\Theta_\beta(\rho_i) = \sqrt{\rho_i(1-\rho_i)}\;\textrm{strongly in}\;L^2(\TT^d\times[0,T]).\]
Therefore, using \eqref{u_7}, there exists a random sequence $\beta\rightarrow 0$ such that, for every $i\in\{1,2\}$ and $t\in[0,T]$,
\begin{equation}\label{u_15}\lim_{\beta\rightarrow 0}\abs{\int_0^t\int_{\TT^d}\sgn(\rho_2-\rho_1)\Big(\zeta_\beta(\rho_1)\sqrt{\rho_1(1-\rho_1)}-\Theta_\beta(\rho_1)\Big)\nabla\cdot\dd\xi^F} =0.\end{equation}
In combination, \eqref{u_10}, \eqref{u_11}, \eqref{u_16}, and \eqref{u_15} prove that, $\P$-a.s.\ along random subsequences, for every $t\in[0,T]$,
\begin{equation}\label{u_17} \lim_{\beta\rightarrow 0}\Big(\lim_{\d\rightarrow 0}\Big(\lim_{\ve\rightarrow 0} I^\textrm{mart}_t\Big)\Big) =0.\end{equation}
\textbf{The correction term}.  The local $H^1$-regularity of the solutions and the support of $\zeta_\beta$ proves that, for every $t\in[0,T]$, for $I^{\textrm{cor}}_t = \int_0^t \dd I^{\textrm{cor}}_s$,
\begin{align*}
\lim_{\ve\rightarrow 0} I^{\textrm{cor}}_{t} & = -\frac{F_1}{8}\int_0^t\int_{\TT^d}\int_\R \zeta_\beta\overline{\kappa}^{\d}_{1,s}\overline{\kappa}^{\d}_{2,s}\Big(\frac{(1-2\rho_1)}{\sqrt{\rho_1(1-\rho_1)}}-\frac{(1-2\rho_2)}{\sqrt{\rho_2(1-\rho_2)}}\Big)^2\nabla\rho_1\cdot\nabla\rho_2
\\ & \quad -\frac{1}{2}\int_0^t\int_{\TT^d}\int_\R \zeta_\beta\overline{\kappa}^{\d}_{1,s}\overline{\kappa}^{\d}_{2,s}F_3\Big(\sqrt{\rho_1(1-\rho_1)}-\sqrt{\rho_2(1-\rho_2)}\Big)^2.
\end{align*}
Since on the support of $\zeta_\beta\overline{\kappa}^{\d}_{1,s}\overline{\kappa}^{\d}_{2,s}$ we have for every $\d\in(0,\nicefrac{\beta}{2})$ and $i\in\{1,2\}$ that $\nicefrac{\beta}{2}\leq \rho_i\leq 1-\nicefrac{\beta}{2}$, we have using the local Lipschitz continuity of the nonlinearities on $(0,1)$ that, for $c\in(0,\infty)$ independent of $\d\in(0,1)$ but depending on $\beta\in(0,1)$, for every $t\in[0,T]$,
\[\abs{\lim_{\ve\rightarrow 0} I^{\textrm{cor}}_t} \leq c\delta \int_0^T\int_{\TT^d}\int_\R \zeta_\beta (\delta \overline{\kappa}^\d_{1,s})\overline{\kappa}^\delta_{2,s} \nabla\rho_1\cdot\nabla\rho_2\leq c\delta \int_0^T\int_{\TT^d}\mathbf{1}_{\{\nicefrac{\beta}{4}\leq \rho_i\leq 1-\nicefrac{\beta}{4}\;\forall\;i\in\{1,2\}\}}\nabla\rho_1\cdot\nabla\rho_2. \]
It then follows from the local $H^1$-regularity of the solutions and the dominated convergence theorem that, for every $t\in[0,T]$,
\begin{equation}\label{u_18}\lim_{\d\rightarrow 0}\Big(\lim_{\ve\rightarrow 0} I^\textrm{cor}_t\Big) = 0.\end{equation}
\textbf{The cutoff term}.  The cutoff term takes the form, for every $t\in[0,T]$, for $I^{\textrm{cut}}_t = \int_0^t\dd I^{\textrm{cut}}_{s}$,
\begin{align*}
I^{\textrm{cut}}_t & = \int_0^t\int_{\TT^d}\int_{\R} (2\chi^{\ve,\d}_{2,s}-1)(\kappa^{\ve,\d}*q_{1,s})\zeta'_\beta+\int_0^t\int_{\TT^d}\int_{\R} (2\chi^{\ve,\d}_{1,s}-1)(\kappa^{\ve,\d}*q_{2,s})\zeta'_\beta
\\ & \quad -\frac{1}{2}\int_0^t\int_{\TT^d}\int_{\R} \big(\overline{\kappa}^{\ve,\d}_{1,s}*F_3(\rho_1(1-\rho_1)\big)(2\chi^{\ve,\d}_{2,s}-1)\zeta'_\beta
\\ & \quad -\frac{1}{2}\int_0^t\int_{\TT^d}\int_{\R} \big(\overline{\kappa}^{\ve,\d}_{2,s}*F_3\rho_2(1-\rho_2)\big)(2\chi^{\ve,\d}_{1,s}-1)\zeta'_\beta,
\end{align*}
and therefore, using the boundedness of the kinetic function and $F_3$ and the support of the convolution kernel, for some $c\in(0,\infty)$ independent of $\ve$, $\d$, and $\beta$, for every $t\in[0,T]$,
\begin{align*}
\limsup_{\ve,\d\rightarrow 0}\abs{I^{\textrm{cut}}_t } &  \leq c\Big(\int_0^T\int_{\TT^d}\int_\R \abs{\zeta'_\beta(\xi)}\dd q_1+\int_0^T\int_{\TT^d}\int_\R \abs{\zeta_\beta'(\xi')}\dd q_2\Big)
\\ & \quad  + c\Big(\int_0^T\int_{\TT^d}\rho_1(1-\rho_1)\abs{\zeta'_\beta(\rho_1)}+\int_0^T\int_{\TT^d}\rho_2(1-\rho_2)\abs{\zeta'_\beta(\rho_2)}\Big).
\end{align*}
Finally, using the bounds on $\zeta_\beta'$, for some $c\in(0,\infty)$ independent of $\ve$, $\d$, and $\beta$, for every $t\in[0,T]$,
\begin{align}\label{u_20}
\limsup_{\ve,\d\rightarrow 0}\abs{I^{\textrm{cut}}_t }& \leq c\sum_{i=1}^2\Big(\int_0^T\int_\beta^{2\beta}\int_{\TT^d} (\xi(1-\xi))^{-1}\dd q_i+\int_0^T\int_{1-2\beta}^{1-\beta}\int_{\TT^d}(\xi(1-\xi))^{-1}\dd q_i\Big)
\\ \nonumber & \quad +c\sum_{i=1}^2\Big( \int_0^T\int_{\TT^d}\mathbf{1}_{\{\beta<\rho_i<2\beta\}}+\mathbf{1}_{\{1-2\beta<\rho_i<1-\beta\}} \Big).
\end{align}
It is a consequence of the finiteness of the measures $\dd \mu_i = (\xi(1-\xi))^{-1}\dd q_i$ and the dominated convergence theorem that both terms on the righthand side of \eqref{u_20} vanish as $\beta\rightarrow 0$, and therefore that
\begin{equation}\label{u_21}\lim_{\beta\rightarrow 0}\Big(\limsup_{\ve,\d\rightarrow 0}\abs{I^{\textrm{cut}}_t }\Big) = 0.\end{equation}
Since we have using the fact that the kinetic functions are $\{0,1\}$-valued, for every $t\in[0,T]$,
\[\lim_{\beta,\d,\ve\rightarrow 0}\int_{\TT^d}\int_\R (\chi^{\ve,\d}_{1,t}+\chi^{\ve,\d}_{2,t}-2\chi^{\ve,\d}_{1,t}\chi^{\ve,\d}_{2,t})\zeta_\beta = \int_{\TT^d}\int_\R\abs{\chi_1-\chi_2}^2 = \int_{\TT^d}\abs{\rho_1-\rho_2},\]
and after returning to \eqref{u_1} and \eqref{u_5}, we have from \eqref{u_6}, \eqref{u_17}, \eqref{u_18}, and \eqref{u_21} that, $\P$-a.s.\ for every $t\in[0,T]$,
\[ \norm{\rho_1(\cdot,t)-\rho_2(\cdot,t)}_{L^1(\TT^d)}\leq \norm{\rho_{0,1}-\rho_{0,2}}_{L^1(\TT^d)},\]
which completes the proof.  \end{proof}

\subsection{Existence of renormalized kinetic solutions}\label{sec_rks_exist}  We will first show the existence of solutions to approximations of \eqref{intro_fd_eq} defined by smoothed versions of the square root, and including an $L^2$-valued control term that will be important for the proof the large deviations principle below.  The reason for the specific form of the regularizations in \eqref{lem_approx} appears in the proof of the entropy estimate in Proposition~\ref{prop_ap} below.

\begin{lem}\label{lem_approx}  Let $s\colon\R\rightarrow[0,1]$ be defined by
\[s(x)=\sqrt{x(1-x)}\;\textrm{if $x\in[0,1]$ and}\;s(x)=0\;\textrm{if $x\notin[0,1]$.}\]
Then there exists a sequence of smooth, compactly supported approximations $\{s^\eta\}_{\eta\in(0,\nicefrac{1}{4})}$ satisfying the properties that $s^\eta(x) = 0$ if $x\notin[\eta,1-\eta]$, that, as $\eta\rightarrow 0$,
\[s^\eta\rightarrow s\;\;\textrm{uniformly on $\R$ and}\;(s^\eta)'\rightarrow s'\;\textrm{locally uniformly on $(0,1)$,}\]
and that, for some $c\in(0,\infty)$ independent of $\eta\in(0,1)$,
\[s^\eta(x)\leq cs(x)\;\;\textrm{and}\;\;\abs{(s^\eta)'(x)}\leq c \abs{s'(x)}\;\;\textrm{for every}\;\;x\in(0,1).\]
\end{lem}

\begin{proof}  For every $\eta\in(0,\nicefrac{1}{4})$ let $\tilde{s}^\eta\in\C(\R)$ be defined by
\[\tilde{s}^\eta(x) = \sqrt{x(1-x)}-\sqrt{2\eta(1-2\eta)}\;\;\textrm{for every $x\in[2\eta,1-2\eta]$ and}\;\;\tilde{s}^\eta(x)=0\;\textrm{otherwise,}\]
and for every $\ve\in(0,1)$ let $\kappa^\ve\in\C^\infty_c(\R)$ denote a standard convolution kernel of scale $\ve\in(0,1)$.  Then, since $\tilde{s}^\eta(x)=s(x)$ and $(\tilde{s}^\eta)'(x)=s'(x)$ for every $x\in(2\eta,1-2\eta)$ and $\tilde{s}^\eta(x)=(\tilde{s}^\eta)'(x)=0$ if $x\notin[2\eta,1-2\eta]$, for every $\eta\in(0,\nicefrac{1}{4})$ there exists $\ve_\eta\in(0,\eta)$ such that the functions defined by $s^\eta=(\tilde{s}^\eta*\kappa^{\ve_\eta})$ satisfy the hypothesis of the lemma with $c=2$.  This completes the proof.  \end{proof}

We will now prove the existence of solutions to the controlled Stratonovich equation
\begin{equation}\label{ex_1}\partial_t\rho = \Delta\rho-\sqrt{\ve}\nabla\cdot\Big(s^\eta(\rho)\circ\dd\xi^F\Big)-\nabla\cdot\Big(s^\eta(\rho)g\Big)\;\textrm{in}\;\TT^d\times(0,T)\;\textrm{with}\;\;\rho(\cdot,0)=\rho_0,\end{equation}
for noise $\xi^F$ and initial data satisfying Assumption~\ref{def_noise} and for an $\F_t$-predictable, $L^2$-integrable control $g$.  We will write \eqref{ex_1} in its It\^o-form
\begin{equation}\label{ex_2}\partial_t\rho = \Delta\rho-\sqrt{\ve}\nabla\cdot\Big(s^\eta(\rho)\dd\xi^F\Big)-\nabla\cdot\Big(s^\eta(\rho)g\Big)+\frac{\ve F_1}{2}\nabla\cdot \Big( (s^\eta)'(\rho)\nabla s^\eta(\rho))\Big),\end{equation}
and understand solutions in the equation's weak-formulation.  We will first define the notion of a weak solution in Definition~\ref{def_weak_solution}, and establish stable with respect to $\eta\in(0,1)$ energy estimates for the solutions in Proposition~\ref{prop_ap}.  We prove the existence of solutions to \eqref{ex_2} in Proposition~\ref{prop_weak_exist}.

\begin{definition}\label{def_weak_solution}  Let $T\in(0,\infty)$, let $\xi^F$ and $\rho_0$ satisfy Assumption~\ref{def_noise}, let $\{s^\eta\}_{\eta\in(0,\nicefrac{1}{4})}$ satisfy the conditions of Lemma~\ref{lem_approx}, let $\ve,\eta\in(0,\nicefrac{1}{4})$, and let $g\in L^2(\O\times[0,T];L^2(\TT^d))^d$ be an $\F_t$-predictable process.   A \emph{weak solution} of \eqref{ex_2} is a $\P$-a.s.\ $L^2(\TT^d;[0,1])$-continuous, $\F_t$-adapted process $\rho\in L^2([0,T]\times\O;H^1(\TT^d))$ that satisfies, for every $\psi\in\C^\infty(\TT^d)$ and $t\in[0,T]$,
\begin{align}\label{dwp_1}
& \int_{\TT^d}\rho(x,t)\psi(x)\dx  = \int_{\TT^d}\rho_0(x)\psi(x)\dx -\int_0^t\int_{\TT^d}\nabla\rho(x,s)\cdot\nabla\psi(x)\dx\ds
\\ \nonumber & \quad +\sqrt{\ve}\int_0^t\int_{\TT^d}s^\eta(\rho(x,s))\nabla \psi(x)\cdot \dd\xi^K  + \int_0^t\int_{\TT^d}s^\eta(\rho(x,s))\nabla\psi(x)\cdot g(x,s)\dx\ds
\\ \nonumber & \quad -\frac{\ve F_1}{2}\int_0^t\int_{\TT^d}((s^\eta)'(\rho(x,s)))^2\nabla \rho(x,s)\cdot \nabla\psi(x)\dx\ds.
\end{align}
\end{definition}

\begin{prop}\label{prop_ap}  Let $T\in(0,\infty)$, let $\xi^F$ and $\rho_0$ satisfy Assumption~\ref{def_noise}, let $\{s^\eta\}_{\eta\in(0,\nicefrac{1}{4})}$ satisfy Lemma~\ref{lem_approx}, let $\ve,\eta\in(0,\nicefrac{1}{4})$, and let $g\in L^2(\O\times[0,T];L^2(\TT^d))^d$ be an $\F_t$-predictable process.  Then, if $\rho$ is a weak solution of \eqref{ex_2} in the sense of Definition~\ref{def_weak_solution}, we have the following four estimates.
\begin{enumerate}
\item  \emph{Preservation of mass}:  $\P$-a.s.\ for every $t\in[0,T]$,
\[\norm{\rho(\cdot,t)}_{L^1(\TT^d)}=\norm{\rho_0}_{L^1(\TT^d)}.\]
\item \emph{The energy estimate}:  for every $\a\in[1,\infty)$ there exists $c\in(0,\infty)$ independent of $\ve$, $\eta$, and $\a$ such that, for $\norm{F_3}=\max_{x\in\TT^d}\abs{F_3(x)}$,
\begin{align*}
& \E\Big[\max_{t\in[0,T]}\norm{\rho(\cdot,t)}^{\a+1}_{L^{\a+1}(\TT^d)}+\int_0^T\int_{\TT^d}\abs{\nabla\rho^\frac{\a+1}{2}}^2\Big]
\\ & \leq c\E\Big[\int_{\TT^d}\rho_0^{\a+1}\Big]+c\a^2\Big(\E\Big[\int_0^T\int_{\TT^d}\abs{g}^2\Big]+\ve\norm{F_3}\E\Big[\int_0^T\int_{\TT^d}\rho^{\a-1}\Big]\Big).
\end{align*}

\item \emph{The entropy estimate}: for $\Psi\colon(0,1)\rightarrow \R$ defined by
\[\Psi(\xi) = \Big(\xi\log(\xi)-\xi\Big)-\Big((1-\xi)\log(1-\xi)-(1-\xi)\Big),\]
for $c\in(0,\infty)$ independent of $\ve$ and $\eta$,
\begin{align*}
& \E\Big[\max_{t\in[0,T]}\Big(\int_{\TT^d}\Psi(\rho)\Big)+\int_0^T\int_{\TT^d}\frac{1}{\rho(1-\rho)}\abs{\nabla\rho}^2\dt\Big]
\\ & \leq \E\Big[\int_{\TT^d}\Psi(\rho_0)\Big]+c\Big(\E\Big[\int_0^T\int_{\TT^d}\abs{g}^2\Big]+ \ve (F_1+\norm{F_3}T)\Big).
\end{align*}
\item \emph{The time regularity}:  for every $\beta\in(0,\nicefrac{1}{2})$, for $c\in(0,\infty)$ independent of $\ve$ and $\eta$,
\begin{align*}
& \E\Big[\norm{\rho}^2_{W^{\beta,2}([0,T];H^{-1}(\TT^d))}\Big]
\\ & \leq c\Big(1+\ve^2 F_1^2\norm{(s^\eta)'}^4_{L^\infty(\R)}\Big)\Big(\E\Big[\int_{\TT^d}\rho_0^2\Big]+\E\Big[\int_0^T\int_{\TT^d}\abs{g}^2\Big]+\ve(F_1+\norm{F_3}T\Big).
\end{align*}
\end{enumerate}
\end{prop}

\begin{proof}  The $L^1$-estimate follows from taking $\psi=1$ in Definition~\ref{def_weak_solution} and the fact that $\rho$ is $\P$-a.s.\ $[0,1]$-valued.  The $L^{\a+1}$-estimate for $\a\in[1,\infty)$ is a consequence of It\^o's formula (see, for example, Krylov~\cite[Theorem~3.1]{Kry2013}) applied to the function $\rho^{\a+1}$.  We have $\P$-a.s.\ that
\begin{align}\label{ap_0}
& \dd\Big(\int_{\TT^d}\rho^{\a+1}\Big)+\a(\a+1)\int_{\TT^d}\rho^{\a-1}\abs{\nabla\rho}^2\dt  = \sqrt{\ve}\a(\a+1)\int_{\TT^d}s^\eta(\rho)\rho^{\a-1}\nabla\rho\cdot\dd\xi^F
\\ \nonumber & \quad +\a(\a+1)\int_{\TT^d}s^\eta(\rho)\rho^{\a-1}\nabla\rho\cdot g \dt + \frac{\ve \a(\a+1)}{2}\int_{\TT^d}F_3(s^\eta(\rho))^2\rho^{\a-1}\dt,
\end{align}
where here we have already observed the cancellation between the final term on the righthand side of \eqref{dwp_1} and part of the It\^o-correction.  Since $0\leq\rho\leq 1$ we have that $\rho^{\a-1}\leq\rho^\frac{\a-1}{2}$ and therefore, using the boundedness of $s^\eta$, H\"older's inequality, and Young's inequality, for every $\d\in(0,1)$,
\[\a(\a+1)\int_{\TT^d}s^\eta(\rho)\rho^{\a-1}\nabla\rho\cdot g \leq \frac{\a(\a+1)\d}{2}\int_{\TT^d}\rho^{\a-1}\abs{\nabla\rho}^2+\frac{\a(\a+1)}{2\d}\int_{\TT^d}\abs{g}^2.\]
Returning to \eqref{ap_0}, after choosing $\d$ sufficiently small, we have $\P$-a.s.\ for some $c\in(0,\infty)$ that
\begin{align}\label{ap_20}
& \max_{t\in[0,T]}\norm{\rho(\cdot,t)}^{\a+1}_{L^{\a+1}(\TT^d)}+\a(\a+1)\int_0^T\int_{\TT^d}\rho^{\a-1}\abs{\nabla\rho}^2
\\ \nonumber & \leq \norm{\rho_0}^{\a+1}_{L^{\a+1}(\TT^d)} + c\a(\a+1)\Big(\int_0^T\int_{\TT^d}\abs{g}^2+\ve\norm{F_3}\int_0^T\int_{\TT^d} \rho^{\a-1}\Big)
\\ \nonumber & \quad + c\sqrt{\ve}\a(\a+1)\Big(\max_{t\in[0,T]}\abs{\int_0^t\int_{\TT^d}s^\eta(\rho)\rho^{\a-1}\nabla\rho\cdot\dd\xi^F}\Big),
\end{align}
for $\norm{F}_3 = \max_{x\in\TT^d}\abs{F_3(x)}$.  For the stochastic integral, let $\Theta^\eta$ be defined for every $\xi\in[0,1]$ by
\[\Theta^\eta(\xi) = \int_0^\xi s^\eta(\xi')(\xi')^{\a-1}\dxip\leq \int_0^\xi (\xi')^{\a-1}\dxip\leq \frac{1}{\a}\xi^\a,\]
and observe that, after integrating by parts, for every $t\in[0,T]$,
\[\int_0^t\int_{\TT^d}s^\eta(\rho)\rho^{\a-1}\nabla\rho\cdot\dd \xi^F = -\int_0^t\int_{\TT^d}\Theta^\eta(\rho)\nabla\cdot\dd\xi^F.\]
The Burkholder--Davis--Gundy inequality, H\"older's inequality, and Young's inequality prove that, for every $\d\in(0,1)$, for some $c\in(0,\infty)$,
\begin{align}\label{fgd_2}
& \E_{\F_0}\Big[\sqrt{\ve}\a(\a+1)\max_{t\in[0,T]}\abs{\int_0^t\int_{\TT^d}s^\eta(\rho)\nabla\rho\cdot\dd \xi^F}\Big]
\\ \nonumber & \leq c\sqrt{\ve}(\a+1)\E_{\F_0}\Big[\Big(\int_0^T\Sum_{i=1}^d\sum_{k=1}^\infty\Big(\int_{\TT^d}\rho^{\a}\partial_if_k\Big)^2\Big)^\frac{1}{2}\Big]
\\ \nonumber & \leq c\sqrt{\ve}(\a+1)\E_{\F_0}\Big[\Big(\max_{t\in[0,T]}\int_{\TT^d}\rho^{\a+1}\Big)^\frac{1}{2}\Big(\int_0^T\int_{\TT^d}\rho^{\a-1}F_3\Big)^\frac{1}{2}\Big]
\\ \nonumber & \leq \frac{\d}{2}\E_{\F_0}\Big[\max_{t\in[0,T]}\int_{\TT^d}\rho^{\a+1}\Big]+\frac{\ve(\a+1)^2\norm{F_3}}{2\d}\E_{\F_0}\Big[\int_0^T\int_{\TT^d}\rho^{\a-1}\Big].
\end{align}
Returning to \eqref{ap_20}, after choosing $\d$ sufficienty small in \eqref{fgd_2} and using that $(\a+1)\leq 2\a$ for every $\a\in[1,\infty)$, for some $c\in(0,\infty)$ independent of $\ve$, $\eta$, and $\a$,
\begin{align*}
& \E\Big[\max_{t\in[0,T]}\norm{\rho(\cdot,t)}^{\a+1}_{L^{\a+1}(\TT^d)}+\int_0^T\int_{\TT^d}\abs{\nabla\rho^\frac{\a+1}{2}}^2\Big]
\\ & \leq c\E\Big[\int_{\TT^d}\rho_0^{\a+1}\Big]+c\a^2\Big(\E\Big[\int_0^T\int_{\TT^d}\abs{g}^2\Big]+\ve\norm{F_3}\E\Big[\int_0^T\int_{\TT^d}\rho^{\a-1}\Big]\Big),
\end{align*}
where in the final step we have used the equality $(\a+1)^2\rho^{\a-1}\abs{\nabla\rho}^2 = 4\abs{\nabla\rho^\frac{\a+1}{2}}^2$.  This completes the proof of the second energy estimate.

For the entropy estimate, let $\Psi\colon[0,1]\rightarrow\R$ denote the bounded, continuous function
\[\Psi(\xi) = \Big(\xi\log(\xi)-\xi\Big)- \Big((1-\xi)\log(1-\xi)-(1-\xi)\Big),\]
for which we have that $\Psi'(\xi) = \log(\xi)+\log(1-\xi)$ and $\Psi''(\xi) = (\xi(1-\xi))^{-1}$ on $(0,1)$.  For every $\d\in(0,1)$ let $\Psi_\d\colon [0,1]\rightarrow\R$ denote the unique smooth function satisfying $\Psi_\d(\nicefrac{1}{2})=\Psi(\nicefrac{1}{2})$ and $\Psi_\d''(\xi) = (\xi(1-\xi)+\d)^{-1}$.  It is then a consequence of It\^o's formula (see, for example, \cite[Theorem~3.1]{Kry2013}) applied to the function $\Psi(\rho)$ that, $\P$-a.s.\ for every $t\in[0,T]$,
\begin{align*}
& \dd\Big(\frac{1}{2}\int_{\TT^d}\Psi_\d(\rho)\Big)+\int_{\TT^d}\frac{1}{\rho(1-\rho)+\d}\abs{\nabla\rho}^2\dt
\\ & = \sqrt{\ve}\int_{\TT^d}\frac{s^\eta(\rho)}{\rho(1-\rho)+\d}\nabla\rho\cdot\dd\xi^F+\int_{\TT^d}\frac{s^\eta(\rho)}{\rho(1-\rho)+\d}\nabla\rho\cdot g \dt + \frac{\ve}{2}\int_{\TT^d}\frac{F_3(s^\eta(\rho))^2}{\rho(1-\rho)+\d}\dt,
\end{align*}
where here, as in the above, we have already observed the cancellation between the final term of \eqref{dwp_1} and part of the It\^o-correction.  It then follows from the fact that there exists $c\in(0,\infty)$ independent of $\eta$ such that $s^\eta(x)\leq c s(x)$ and from H\"older's inequality and Young's inequality that, for some $c\in(0,\infty)$ independent of $\ve$ and $\eta$,
\begin{align*}
& \max_{t\in[0,T]}\int_{\TT^d}\Psi_\d(\rho)+\int_0^T\int_{\TT^d}\frac{1}{\rho(1-\rho)+\d}\abs{\nabla\rho}^2\dt
\\ & \leq  \int_{\TT^d}\Psi_\d(\rho_0)+c\Big(\sup_{t\in[0,T]}\Big(\sqrt{\ve}\int_0^t\int_{\TT^d}\frac{s^\eta(\rho)}{\rho(1-\rho)+\d}\nabla\rho\cdot\dd\xi^F\Big)+\int_0^T\int_{\TT^d}\abs{g}^2+ \ve\norm{F_3}T\Big).
\end{align*}
A repetition of the argument above relying on the Burkh\"older--Davis--Gundy inquality, H\"older's inequality, Young's inequality, Jensen's inequality, and $s^\eta(x)\leq c s(x)$ then proves that, for some $c\in(0,\infty)$ independent of $\ve$, $\eta$, and $\d$,
\[\E\Big[\max_{t\in[0,T]}\abs{c\sqrt{\ve}\int_0^t\int_{\TT^d}\frac{s^\eta(\rho)}{\rho(1-\rho)+\d}\nabla\rho\cdot\dd\xi^F}\Big] \leq c\ve F_1+\frac{1}{2}\int_0^T\int_{\TT^d}\frac{1}{(\rho(1-\rho))+\d}\abs{\nabla\rho}^2.\]
Therefore, for some $c\in(0,\infty)$ independent of $\ve$, $\eta$, and $\delta$,
\begin{align*}
& \E\Big[\max_{t\in[0,T]}\int_{\TT^d}\Psi_\d(\rho)+\int_0^T\int_{\TT^d}\frac{1}{\rho(1-\rho)+\d}\abs{\nabla\rho}^2\dt\Big]
\\ & \leq \E\Big[\int_{\TT^d}\Psi_\d(\rho_0)\Big]+c\Big(\E\Big[\int_0^T\int_{\TT^d}\abs{g}^2\Big]+ \ve (F_1+T\norm{F_3})\Big).
\end{align*}
The claim then follows after passing to the limit $\d\rightarrow 0$ using the definition of $\Psi_\d$ and the monotone convergence theorem.

It remains to estimate the time derivative.  We observe distributionally that, $\P$-a.s.\ for every $t\in[0,T]$,
\[\rho(\cdot,t)=\rho_0(\cdot)+I^{\textrm{f.v.}}_t(\cdot)+I^{\textrm{mart}}_t(\cdot),\]
for the finite variation part
\[I^{\textrm{f.v.}}_t(\cdot) = \nabla\cdot\Big(\int_0^t\nabla\rho(\cdot,s)\ds-\int_0^ts^\eta(\rho(\cdot,s))g\ds+\frac{\ve F_1}{2}\int_0^t((s^\eta)'(\rho(\cdot,s)))^2\nabla\rho(\cdot,s)\ds \Big),\]
and for the martingale part
\[I^{\textrm{mart}}_t(\cdot) = -\nabla\cdot\Big(\sqrt{\ve}\int_0^ts^\eta(\rho(\cdot,s))\dd\xi^F\Big).\]
We then have that using the boundedness of $s^\eta$ that, for $c\in(0,\infty)$ independent of $\ve$ and $\eta$,
\begin{align*}
& \norm{I^{\textrm{f.v.}}}^2_{W^{1,2}([0,T];H^{-1}(\TT^d))}
\\ & \leq c\Big(\Big(1+\ve^2 F_1^2\norm{(s^\eta)'}^4_{L^\infty(\R)}\Big)\norm{\nabla\rho}^2_{L^2(\TT^d\times[0,T])^d} +\norm{g}^2_{L^2([0,T];L^2(\TT^d;\R^d)} \Big).
\end{align*}
For the martingale part, we have by definition of the fractional Sobolev norm and the definition of the noise that, for every $\beta\in(0,\nicefrac{1}{2})$,
\[\E\norm{I^{\textrm{mart}}}^2_{W^{\beta,2}([0,T];H^{-1}(\TT^d))} = \ve \int_0^T\int_0^T\frac{\E\Big[\abs{\sum_{k=1}^\infty \int_s^t\Big(\int_{\TT^d}f_k^2(s^\eta(\rho))^2\Big)^\frac{1}{2}\dd B^k_t}^2\Big]}{\abs{s-t}^{1+2\beta}}\ds\dt.\]
It follows from H\"older's inequality, the Burkh\"older--Davis--Gundy inequality, the definition of the noise, and the boundedness of $s^\eta$ that, for some $c\in(0,\infty)$ independent of $\ve$ and $\eta$, for every $s\leq t\in [0,T]$,
\[\E\Big[\abs{\sum_{k=1}^\infty \int_s^t\Big(\int_{\TT^d}f_k^2(s^\eta(\rho))^2\Big)^\frac{1}{2}\dd B^k_t}^2\Big]\leq c F_1\abs{s-t}.\]
Therefore, since $\beta\in(0,\nicefrac{1}{2})$, there exists $c\in(0,\infty)$ independent of $\ve$ and $\eta$ but depending on $T$ and $\beta$ such that
\[\E\norm{I^{\textrm{mart}}}^2_{W^{\beta,2}([0,T];H^{-1}(\TT^d))}\leq c\ve F_1.\]
It then follows from the embedding of $W^{2,1}$ into $W^{\beta,2}$ and the energy estimate that
\begin{align*}
& \E\Big[\norm{\rho}^2_{W^{\beta,2}([0,T];H^{-1}(\TT^d))}\Big]
\\ & \leq c\Big(1+\ve^2 F_1^2\norm{(s^\eta)'}^4_{L^\infty(\R)}\Big)\Big(\E\Big[\int_{\TT^d}\rho_0^2\Big]+\E\Big[\int_0^T\int_{\TT^d}\abs{g}^2\Big]+\ve(F_1+\norm{F_3}T)\Big),
\end{align*}
which completes the proof. \end{proof}

\begin{prop}\label{prop_weak_exist}   Let $T\in(0,\infty)$, let $\xi^F$ and $\rho_0$ satisfy Assumption~\ref{def_noise}, let $\{s^\eta\}_{\eta\in(0,\nicefrac{1}{4})}$ satisfy the conditions of Lemma~\ref{lem_approx}, let $\ve,\eta\in(0,1)$, and let $g\in L^2(\O\times[0,T];L^2(\TT^d))^d$ be an $\F_t$-predictable process.  Then there exists a weak solution of \eqref{ex_2} in the sense of Definition~\ref{def_weak_solution}.  \end{prop}

\begin{proof}  The proof of existence is a consequence of the smoothness and definition of the coefficients and noise, a standard Galerkin approximation, the estimates of Proposition~\ref{prop_ap}, and the Aubin--Lions--Simon lemma, and specifically Simon \cite[Corollary~5]{Simon}, relying on the compact embedding of $H^1(\TT^d)$ into $L^2(\TT^d)$ and the continuous embedding of $L^2(\TT^d)$ into $H^{-1}(\TT^d)$.  The fact that the solution is $[0,1]$-valued follows by applying It\^o's formula to the functions $(\rho)_-$ and $(\rho-1)_+$, which completes the proof. \end{proof}

We will now prove the existence of stochastic kinetic solutions solutions to the equation
\begin{equation}\label{ex_3}\partial_t\rho = \Delta\rho-\sqrt{\ve}\nabla\cdot(\sqrt{\rho(1-\rho)}\circ\dd\xi^F)+\nabla\cdot(\sqrt{\rho(1-\rho)}g)\;\textrm{in $\TT^d\times(0,T)$ with}\;\rho(\cdot,0)=\rho_0,\end{equation}
in the sense of Definition~\ref{def_sol}.  The essential difficulty is that the singularity appearing due to the Stratonovich-to-It\^o correction makes it intractable to obtain stable estimates on the time-derivative of the solution.  This issue also appears in the estimate on the time regularity in Proposition~\ref{prop_ap}, which diverges in the limit $\eta\rightarrow 0$.  It is for this reason that we instead consider renormalizations of the solution that localize it away from the sets $\{\rho\simeq 0\}$ and $\{\rho\simeq 1\}$.  These renormalizations are defined in Definition~\ref{def_Theta} and are used in Proposition~\ref{def_metric} to define an equivalent metric for the strong topology on $L^2(\TT^d\times[0,T])$ that is used in Proposition~\ref{prop_eta_tight} to prove the tightness of the approximating probability laws defined by the $\rho^\eta$ and to prove existence in Theorem~\ref{thm_new}.

\begin{definition}\label{def_Theta}  For every $\d\in(0,\nicefrac{1}{4})$ let $\theta_\d\in\C^\infty_c(\R;[0,1])$ be a function satisfying that $\theta_\d(\xi) = 0$ if $\xi\leq \d$ or $\xi\geq 1-\d$ and that $\theta_\d(\xi)=1$ if $2\d\leq \xi\leq 1-2\d$.  For every $\d\in(0,\nicefrac{1}{4})$ let $\Theta_\d\colon[0,\infty)\rightarrow[0,1]$ be defined by $\Theta_\d(\xi) = \xi\theta_\d(\xi)$.  \end{definition}

\begin{prop}\label{prop_Theta_ap}  Let $T\in(0,\infty)$, let $\xi^F$ and $\rho_0$ satisfy Assumption~\ref{def_noise}, let $\ve\in(0,1)$, let $g\in L^2(\O\times[0,T];L^2(\TT^d))^d$ be an $\F_t$-predictable process, and for every $\eta\in(0,\nicefrac{1}{4})$ let $\rho^\eta$ be the solution of \eqref{ex_2} constructed in Proposition~\ref{prop_weak_exist}.  Then,  for every $\d\in(0,\nicefrac{1}{4})$ and $s > \frac{d+2}{2}$, there exists $c\in(0,\infty)$ independent of $\eta$ and $\ve$ such that
\begin{align*}
\E \norm{\Theta_\d(\rho^\eta)}_{W^{\beta,1}([0,T];H^{-s}(\TT^d))}& \leq c\E\Big[\int_0^T\int_{\TT^d}\abs{\nabla\rho^\eta}+s^\eta(\rho^\eta)\abs{g}+(1+\ve F_1)\abs{\nabla\rho^\eta}^2 \Big]
\\ & \quad +c\sqrt{\ve F_1}\E\Big[\int_0^T\int_{\TT^d}s^\eta(\rho^\eta)^2+\abs{\nabla\rho^\eta}^2\Big]^\frac{1}{2}.
\end{align*}
\end{prop}

\begin{proof}  For the functions $\{s^\eta\}_{\eta\in(0,\nicefrac{1}{4})}$ satisfying the conditions of Lemma~\ref{lem_approx}, it follows from It\^o's formula that distributionally, $\P$-a.s.\ for every $t\in[0,T]$,
\[\Theta_\d(\rho^\eta) = \Theta_\d(\rho_0)+I^{\textrm{f.v.}}_t+I^{\textrm{mart.}}_t,\]
for the finite variation part
\begin{align*}
I^{\textrm{f.v.}}_t & = \nabla\cdot\Big(\int_0^t\nabla\Theta_\d(\rho^\eta)-\int_0^ts^\eta(\rho^\eta)g\ds +\frac{\ve F_1}{2}\int_0^t((s^\eta)'(\rho^\eta))^2\nabla\Theta_\d(\rho^\eta)\ds \Big)
\\ & \quad -\int_0^t\Theta_\d''(\rho^\eta)\abs{\nabla\rho^\eta}^2-\frac{\ve F_1}{2}\int_0^t((s^\eta)'(\rho^\eta))^2\Theta_\d''(\rho^\eta)\abs{\nabla\rho^\eta}^2,
\end{align*}
and for the martingale part
\[I^{\textrm{mart.}}_t = \sqrt{\ve}\nabla\cdot\Big(\int_0^t\Theta'_\d(\rho^\eta)s^\eta(\rho^\eta)\dd\xi^F\Big)-\sqrt{\ve}\int_0^t\Theta''_\d(\rho^\eta)s^\eta(\rho^\eta)\nabla\rho^\eta\cdot\dd\xi^F.\]
There are two essential differences in this case, when compared to the proof of time-regularity in Proposition~\ref{prop_ap}.  The first is that $\Theta'_\d$ and $\Theta''_\d$ are bounded by a constant depending on $\d$ and are supported on the set $\d\leq \xi\leq 1-\d$ on which $\abs{(s^\eta)'(\xi)}$ remains bounded independently of $\eta$.   The second are the terms arising from the It\^o-correction, which are only $L^1$-integrable.  It is for this reason that here we obtain estimates in $H^{-s}$ for $s>\frac{d+2}{2}$, since by the Sobolev embedding theorem we have that $H^s$ embeds into $W^{1,\infty}$.

Repeating the methods of Proposition~\ref{prop_ap}, it follows from the definition of $\Theta_\d$ that there exists $c\in(0,\infty)$ depending on $\d$ and $\sup_{\xi\in[\d,1-\d]}\abs{(s^\eta)'(\xi)}$ but independent of $\eta$ and $\ve$ such that
\[\norm{I^{\textrm{f.v.}}_t}_{W^{1,1}([0,T];H^{-s}(\TT^d))}\leq c\Big(\int_0^T\int_{\TT^d}\abs{\nabla\rho^\eta}+s^\eta(\rho^\eta)\abs{g}+(1+\ve F_1)\abs{\nabla\rho^\eta}^2 \Big),\]
and for the martingale term, it follows from the Burkh\"older--Davis--Gundy inequality and H\"older's inequality that, for every $\beta\in(0,\nicefrac{1}{2})$, for $c\in(0,\infty)$ depending on $\d$ but independent of $\ve$ and $\eta$,
\begin{align*}
& \E\norm{I^{\textrm{mart}}}^2_{W^{\beta,1}([0,T];H^{-s}(\TT^d))} = \sqrt{\ve} \int_0^T\int_{\TT^d}\int_0^T\frac{\E\Big[\abs{\sum_{k=1}^\infty \int_s^t\Big(\int_{\TT^d}\abs{f_k}(s^\eta(\rho^\eta)+\abs{\nabla\rho^\eta})\Big)\dd B^k_r}\Big]}{\abs{s-t}^{1+2\beta}}\ds\dt
\\ & \leq c\sqrt{\ve F_1}\int_0^T\int_0^T\frac{\E\Big[\int_s^t\int_{\TT^d}s^\eta(\rho^\eta)^2+\abs{\nabla\rho^\eta}^2\dr\Big]}{\abs{s-t}^{1+2\beta}}\ds\dt \leq c\sqrt{\ve F_1}\E\Big[\int_0^T\int_{\TT^d}s^\eta(\rho^\eta)^2+\abs{\nabla\rho^\eta}^2\Big]^\frac{1}{2},
\end{align*}
which, together with the embedding of $W^{1,1}$ into $W^{\beta,1}$ and the $L^2$-energy estimate for $\nabla\rho^\eta$, completes the proof.  \end{proof}

\begin{prop}\label{def_metric}  Let $D\colon L^2(\TT^d\times[0,T];[0,1])\times L^2(\TT^d\times[0,T];[0,1])\rightarrow[0,1]$ be defined by
\[D(f,g) = \abs{\langle f\rangle-\langle g\rangle}+ \sum_{k=1}^\infty 2^{-k}\frac{\norm{\Theta_{\nicefrac{1}{5k}}(f)-\Theta_{\nicefrac{1}{5k}}(g)}_{L^2(\TT^d\times[0,T])}}{1+\norm{\Theta_{\nicefrac{1}{5k}}(f)-\Theta_{\nicefrac{1}{5k}}(g)}_{L^2(\TT^d\times[0,T])}},\]
for $\langle f\rangle=\int_0^T\int_{\TT^d} f$.  Then $D$ defines a metric on $L^2(\TT^d\times[0,T];[0,1])$ that is equivalent to the strong norm-induced metric on $L^2(\TT^d\times[0,T];[0,1])$.
\end{prop}

\begin{proof}  It follows from Definition~\ref{def_Theta} that $f=g$ in $L^2(\TT^d\times[0,T];[0,1])$ if and only if $\Theta_{\nicefrac{1}{5k}}(f) = \Theta_{\nicefrac{1}{5k}}(g)$ for every $k\in\N$ and $\langle f\rangle = \langle g\rangle$, where the averages are used to differentiate the functions $f=0$ and $g=1$.  This completes the proof that $D(f,g)=0$ if and only if $f=g$.  The symmetry is a consequence of the definition of $D$ and the symmetry of the norm-induced metric on $L^2$ and the triangle inequality is a consequence of the triangle inequality for the norm-induced metric on $L^2$ and the concavity of the function $\xi\mapsto \xi(1+\xi)^{-1}$ for $\xi\in[0,\infty)$.

In order to prove that $D$ is equivalent to the strong norm-induced metric it suffices to prove that they determine the same convergent sequences.  It follows from the definition of the $\Theta_\d$ that
\[\norm{\Theta_{\nicefrac{1}{5k}}(f)-\Theta_{\nicefrac{1}{5k}}(g)}_{L^2(\TT^d\times[0,T])}\leq \norm{f-g}_{L^2(\TT^d\times[0,T])},\]
and similarly for the averages.  Therefore, if a sequence is convergent with respect to the norm-induced metric it also converges with respect to $D$.  Conversely, for every $k\in\N$, it follows from Definition~\ref{def_Theta} that, for some $c\in(0,\infty)$ independent of $k\in\N$,
\[\norm{f-g}_{L^2(\TT^d\times[0,T])}\leq \norm{\Theta_{\nicefrac{1}{5k}}(f)-\Theta_{\nicefrac{1}{5k}}(g)}_{L^2(\TT^d\times[0,T])}+\frac{c}{k},\]
from which it follows that a convergent sequence with respect to $D$ must also converge with respect to the norm-induced metric.  This completes the proof.  \end{proof}

\begin{prop}\label{prop_eta_tight}  Let $T\in(0,\infty)$, let $\xi^F$ and $\rho_0$ satisfy Assumption~\ref{def_noise}, let $\ve\in(0,1)$, let $g\in L^2(\O\times[0,T];L^2(\TT^d))^d$ be an $\F_t$-predictable process, and for every $\eta\in(0,\nicefrac{1}{4})$ let $\rho^\eta$ be the solution of \eqref{ex_2} constructed in Proposition~\ref{prop_weak_exist}.  Then the laws of $\{\rho^\eta\}_{\eta\in(0,\nicefrac{1}{4})}$ are tight on $L^2(\TT^d\times[0,T])$.\end{prop}

\begin{proof}  We will first show that, for every $k\in\N$, the laws of the $\{\Theta_{\nicefrac{1}{5k}}(\rho^\eta)\}_{\eta\in(0,\nicefrac{1}{4})}$ are tight on $L^2(\TT^d\times[0,T])$.  It is a consequence of Definition~\ref{def_Theta}, and in particular the boundedness and Lipschitz continuity of $\Theta_\d$, that for every $\k\in\N$ there exists $c\in(0,\infty)$ depending on $k$ such that
\[\sup_{\eta\in(0,\nicefrac{1}{4})}\E \norm{\Theta_{\nicefrac{1}{5k}}(\rho^\eta)}_{L^2([0,T];H^1(\TT^d))} \leq  c\Big(\E\Big[\int_{\TT^d}\rho_0^2+\int_0^T\int_{\TT^d}\abs{g}^2\Big]+\ve(F_1+\norm{F_3}T)\Big).\]
The tightness of the $\{\Theta_{\nicefrac{1}{5k}}(\rho^\eta)\}_{\eta\in(0,\nicefrac{1}{4})}$ is then a consequence of Proposition~\ref{prop_Theta_ap}, the compact embedding of $H^1(\TT^d)$ into $L^2(\TT^d)$, the continuous embedding of $L^2(\TT^d)$ into $H^{-s}(\TT^d)$ for every $s>\frac{d+2}{2}$, and the Aubin--Lions--Simon lemma \cite{Aubin,pLions,Simon}.

We will now deduce the tightness of the $\{\rho^\eta\}_{\eta\in(0,1)}$.  Let $n\in\N$ be arbitrary and, using the tightness of the renormalized functions, for every $k\in\N$ let $C_k$ be a compact subset of $L^2(\TT^d\times[0,T])$ with respect to the usual norm-induced metric satisfying for every $\eta\in(0,\nicefrac{1}{4})$ that $\P[\Theta_{\nicefrac{1}{5k}}(\rho^\eta)\notin C_k] \leq \frac{1}{2^kn}$.  For every $k\in\N$ let $F_k\colon L^2(\TT^d\times[0,T])\rightarrow L^2(\TT^d\times[0,T])$ be defined by $F_k(f) = \Theta_{\nicefrac{1}{5k}}(f)$.  Since it follows from the Lipschitz continuity of $\Theta_{\nicefrac{1}{5K}}$ that $F_k$ is continuous with respect to the usual norm-induced metric, let $D_k$ be the closed subset $D_k = F^{-1}_k(C_k)$ for every $k\in\N$ and let $D = \cap_{k=1}^\infty D_k$.  It follows from Proposition~\ref{def_metric} that $D$ is a compact subset of $L^2(\TT^d\times[0,T])$, and it follows that, for every $\eta\in(0,1)$,
\[\P[\rho^\eta\notin D]\leq \sum_{k=1}^\infty \P[\Theta_{\nicefrac{1}{5k}}(\rho^\eta)\notin C_k]\leq \sum_{k=1}^\infty \frac{1}{2^k n}\leq n^{-1}.\]
Since $n\in\N$ was arbitrary, this completes the proof.  \end{proof}

\begin{thm}\label{thm_rks_exist}  Let $T\in(0,\infty)$, let $\xi^F$ and $\rho_0$ satisfy Assumption~\ref{def_noise}, let $\ve\in(0,1)$, and let $g\in L^2(\O\times[0,T];L^2(\TT^d))^d$ be an $\F_t$-predictable process.  Then there exists a stochastic kinetic solution of \eqref{ex_3} in the sense of Definition~\ref{def_sol}.  Furthermore, the solution satisfies the estimates of Proposition~\ref{prop_ap} and Theorem~\ref{prop_est_infty}.  \end{thm}

\begin{proof}  The proof is virtually identical to \cite[Theorem~5.29]{FG21}, where in this case the metric defined in Proposition~\ref{def_metric} plays the role of the metric defined in \cite[Definition~5.23]{FG21} and the entropy estimate of Proposition~\ref{prop_ap} yields the optimal regularity of the measure.  \end{proof}

\section{The central limit theorem}\label{sec_clt}

In this section, we will study the fluctuations of the equation
\begin{equation}\label{clt_0}\partial_t \rho^\ve=\Delta \rho^\ve-\sqrt{\ve}\nabla\cdot(\sqrt{\rho^\ve(1-\rho^\ve)}\circ \xi^K)\;\;\textrm{in}\;\;\TT^d\times(0,T)\;\;\textrm{with}\;\;\rho(\cdot,0)=\rho_0,\end{equation}
about the hydrodynamic limit
\begin{equation}\label{clt_00}\partial_t\overline{\rho}=\Delta\overline{\rho}\;\;\textrm{in}\;\;\TT^d\times(0,T)\;\;\textrm{with}\;\;\overline{\rho}(\cdot,0)=\rho_0.\end{equation}
Precisely, we will identify an $\ve\rightarrow 0$, $K(\ve)\rightarrow\infty$ scaling regime such that the random variables
\begin{equation}\label{clt_000}v^\ve = \ve^{-\frac{1}{2}}(\rho^\ve-\overline{\rho}),\end{equation}
converge in probability in $L^2([0,T];H^{-s}(\TT^d))$, for every $s>\frac{d}{2}$, to the generalized Ornstein--Uhlenbeck process
\begin{equation}\label{clt_0000}\partial_t v = \Delta v -\nabla\cdot (\sqrt{\overline{\rho}(1-\overline{\rho})}\xi)\;\;\textrm{in}\;\;\TT^d\times(0,\infty)\;\;\textrm{with}\;\;v=0\;\;\textrm{on}\;\;\TT^d\times\{0\},\end{equation}
for $\xi$ a $d$-dimensional space-time white noise.  An essential difficulty in proving a central limit theorem for the solutions of \eqref{clt_0} is that the equation is only satisfied in the renormalized sense of Definition~\ref{def_sol}, which involves studying the equation satisfied by a nonlinear function of the solution.  The nonlinearity is incompatible with the convergence of the fluctuations in the space $H^{-s}$.

It is for this reason that we first establish a strong CLT for equation \eqref{clt_0} with the square root replaced by a smooth noise coefficient $\sigma$,
\begin{equation}\label{clt_1000}\partial_t \rho^\ve=\Delta \rho^\ve-\sqrt{\ve}\nabla\cdot(\sigma(\rho^\ve)\circ \xi^K)\;\;\textrm{in}\;\;\TT^d\times(0,T)\;\;\textrm{with}\;\;\rho(\cdot,0)=\rho_0,\end{equation}
satisfying $\sigma(0)=\sigma(1)=0$ and $\sigma\in\C^2_c((0,1)))\cap \C([0,1];[0,1])$.  We then extend this CLT to equation \eqref{clt_0} for initial data $\rho_0$ satisfying $\d\leq \rho_0\leq 1-\d$ for some $\delta\in(0,\nicefrac{1}{2})$ using the $L^\infty$-estimate of Theorem~\ref{prop_est_infty} below, after approximating the square root by a smooth $\sigma$ that agrees with the square root on $[\nicefrac{\d}{2},1-\nicefrac{\d}{2}]$ and proving using the pathwise uniqueness proof of Theorem~\ref{thm_rks_unique} that the solutions of \eqref{clt_0} and \eqref{clt_1000} agree for this choice of $\sigma$ on the event that both solutions remain outside the $\nicefrac{\d}{2}$-neighborhood of zero and one.

\subsection{A quantitative LLN for the approximating SPDE}  In this section, we will establish a qualitative law of large numbers for the solutions of \eqref{clt_0} in Theorem~\ref{thm_LLN_classical}, and a quantitative law of large numbers for the solutions of the approximating SPDE \eqref{clt_1000} in Proposition~\ref{prop_quant} that depends on the regularity of $\sigma$.  The well-posedness of \eqref{clt_1000} is explained in Definition~\ref{def_classical} and Proposition~\ref{prop_classical_wp}.  Lastly, in Assumption~\ref{def_noise_trig} we introduce a particular choice of noise $\{\xi^K\}_{K\in\N}$.  We emphasize that our methods do not rely on this choice, and would yield a quantitative rate of convergence for any sequence satisfying Assumption~\ref{def_noise} with $N_K$ replaced by $F_1$ and $M_K$ by $\norm{F}_3$.

\begin{assumption}\label{def_noise_trig}  Let $(\O,\F,\P)$ be a probability space, let $(\F_t)_{t\in[0,\infty)}$ be a filtration on $(\O,\F)$, let $(B^k,W^k)_{k\in\Z^d}$ be independent, $d$-dimensional, $\F_t$-adapted Brownian motions taking values in the space $\C^\infty([0,\infty);(\R^d)^\infty)$ equipped with the metric topology of coordinate-wise convergence, and for every $k\in\Z^d$ let $e_k=\sqrt{2}\sin(k\cdot x)$ and $e'_k = \sqrt{2}\cos(k\cdot x)$.  For every $K\in\N$ let $\xi^K$ be defined by
\[\xi^K(x,t) = \sum_{\abs{k}\leq K} \Big(e_k(x)B^k_t+e'_k(x)W^k_t\Big).\]
We observe that this noise is a special case of Assumption~\ref{def_noise} where we have that
\[ F_1 = N_K = \#\{k\in\Z^d\colon \abs{k}\leq K\}\;\; \textrm{and}\;\;F_3 = M_K = \sum_{\abs{k}\leq K} \abs{k}^2,\]
and that $N_K\leq cK^d$ and $M_K\leq cK^{d+2}$ for some $c\in(0,\infty)$ independent of $K\in\N$.  Finally, let $\rho_0\in L^2(\O;L^2(\TT^d;[0,1]))$ be $\F_0$-measurable.  \end{assumption}

\begin{thm}\label{thm_LLN_classical} Let $T\in(0,\infty)$, let $\{\xi^K\}_{K\in\N}$ satisfy Assumption~\ref{def_noise_trig}, let $\rho_0\in L^2(\TT^d;[0,1])$, and let $\{K(\ve)\}_{\ve\in(0,1)}$ be a sequence that satisfies $\ve K(\ve)^{d+2}\rightarrow 0$ as $\ve\rightarrow 0$.  Then, for the solutions $\rho^\ve$ of \eqref{clt_0} in the sense of Definition~\ref{def_sol}, $\P$-a.s.\ as $\ve\rightarrow 0$,
\[\rho^\ve\rightarrow\overline{\rho}\;\;\textrm{strongly in}\;\;L^2(\TT^d\times[0,T]),\]
for $\overline{\rho}$ the unique solution of \eqref{clt_00} with initial data $\rho_0$.  \end{thm}

\begin{proof}  The proof is a simplified version of Proposition~\ref{prop_entropy_1} with $g=0$ and $\rho_0$ deterministic.  Since we will not require this result in what follows, we postpone the details until the proof of Proposition~\ref{prop_entropy_1} below.  \end{proof}

\begin{definition}\label{def_classical} Let $T\in(0,\infty)$, let $\sigma\in \C^2_c((0,1))\cap \C([0,1];[0,1])$ with $\sigma(0)=\sigma(1)=0$, let $\{\xi^K\}_{K\in\N}$ satisfy Assumption~\ref{def_noise_trig}, let $\ve\in(0,1)$, let $K\in\N$, and let $\rho_0\in L^2(\TT^d;[0,1])$.  A \emph{weak solution} of \eqref{clt_1000} is a continuous $L^2(\TT^d;[0,1])$-valued, $\F_t$-adapted process $\rho\in L^2(\O\times[0,T];L^2(\TT^d))$ that satisfies the following two properties.
\begin{enumerate}
\item  \emph{Regularity}:  we have that
\[\rho\in L^2(\O\times[0,T];H^1(\TT^d)).\]
\item \emph{The equation}:  we have $\P$-a.s.\ that, for every $\psi\in\C^\infty(\TT^d)$ and $t\in[0,T]$,
\[\int_{\TT^d}\rho(x,t)\psi(x) = \int_{\TT^d}\rho_0(x)\psi(x)+\sqrt{\ve}\int_{\TT^d}\sigma(\rho)\nabla\psi\cdot\dd\xi^F-\frac{\ve N_K}{2}\int_{\TT^d}(\sigma'(\rho))^2\nabla\rho\cdot \nabla\psi.\]
\end{enumerate}
\end{definition}

\begin{prop}\label{prop_classical_wp}  Let $T\in(0,\infty)$, let $\sigma\in \C^2_c((0,1))\cap \C([0,1];[0,1])$ with $\sigma(0)=\sigma(1)=0$, let $\{\xi^K\}_{K\in\N}$ satisfy Assumption~\ref{def_noise_trig}, let $\ve\in(0,1)$, let $K\in\N$, and let $\rho_0\in L^2(\TT^d;[0,1])$.  Then there exists a unique solution of \eqref{clt_1000} in the sense of Definition~\ref{def_classical} that satisfies the estimates of Proposition~\ref{prop_ap}.  Furthermore, the solutions defined by Definition~\ref{def_sol} and Definition~\ref{def_classical} coincide.\end{prop}

\begin{proof}  The existence is a consequence of Proposition~\ref{prop_weak_exist}, and the uniqueness follows from a simplified version of Theorem~\ref{thm_rks_unique}.  \end{proof}

\begin{prop}\label{prop_quant}  Let $T\in(0,\infty)$, let $\sigma\in \C^2_c((0,1))\cap \C([0,1];[0,1])$ with $\sigma(0)=\sigma(1)=0$, let $\{\xi^K\}_{K\in\N}$ satisfy Assumption~\ref{def_noise_trig}, let $\ve\in(0,1)$, let $K\in\N$, and let $\rho_0\in L^2(\TT^d;[0,1])$.  Then, for the solution $\rho$ of \eqref{clt_1000} in the sense of Definition~\ref{def_classical} and the solution $\overline{\rho}$ of \eqref{clt_00},
\begin{align*}
& \E\Big[\sup\nolimits_{t\in[0,T]}\norm{\rho-\overline{\rho}}^2_{L^2(\TT^d)}+\int_0^T\int_{\TT^d}\abs{\nabla(\rho-\overline{\rho})}^2\Big]
\\ & \leq c\Big(\ve^2 N^2_K\norm{\sigma'}^2_{L^\infty((0,1))}\int_0^T\int_{\TT^d}\abs{\nabla\sigma(\rho)}^2+ \ve(N_K+ M_KT)\Big).
\end{align*}
\end{prop}

\begin{proof}  Let $w = \rho -\overline{\rho}$ and observe that $w$ is a solution of the equation
\[\partial_t w = \Delta w - \sqrt{\ve}\nabla\cdot(\sigma(\rho)\xi^K)-\frac{\ve N_K}{2}\nabla\cdot (\sigma'(\rho)\nabla\sigma(\rho))\;\;\textrm{in}\;\;\TT^d\times(0,T)\;\;\textrm{with}\;\;w=0\;\;\textrm{on}\;\;\TT^d\times\{0\}.\]
The claim then follows from a repetition of the arguments leading to the $L^2$-estimate of Proposition~\ref{prop_ap} using the boundedness of $\sigma$, H\"older's inequality, and Young's inequality.  \end{proof}

\subsection{The CLT for the approximating SPDE}  We will first establish a strong CLT for the solutions of \eqref{clt_1000} defined by a smooth and bounded $\sigma$.  We will prove that the fluctuations converge strongly in $L^2([0,T];H^{-s}(\TT^d))$, for every $s>\frac{d}{2}$, to the Ornstein--Uhlenbeck process
\begin{equation}\label{OU_approx}\partial_t v = \Delta v - \nabla\cdot(\sigma(\overline{\rho})\xi)\;\;\textrm{in}\;\;\TT^d\times(0,T)\;\;\textrm{with}\;\;v = 0 \;\;\textrm{on}\;\;\TT^d\times\{0\},\end{equation}
for $\xi$ a $d$-dimensional space-time white noise.  We explain the well-posedness of \eqref{OU_approx} in Definition~\ref{def_ou} and Proposition~\ref{ou_wp}.  Observe that Definition~\ref{def_ou} and Proposition~\ref{ou_wp} do not require any smoothness of $\sigma$, and therefore they apply to the square root $\sigma(\xi) = \sqrt{\xi(1-\xi)}$.  We then prove a quantitative CLT for the solutions of \eqref{clt_1000} in Theorem~\ref{thm_clt} that does depend on the smoothness of $\sigma$.

\begin{definition}\label{def_ou}  Let $T\in(0,\infty)$, let $\xi$ be an $\R^d$-valued space-time white noise, let $\sigma\in \C([0,1];[0,1])$ satisfy $\sigma(0)=\sigma(1)=0$, and let $\rho_0\in L^2(\TT^d)$.  Let $\overline{\rho}$ be the  weak solution of \eqref{clt_000} with initial data $\rho_0$.  A strong solution of \eqref{clt_0000} is an $\F_t$-adapted and almost surely continuous $H^{-s}(\TT^d)$-valued process $v\in L^2([0,T]\times\O;H^{-s}(\TT^d))$, for every $s>\frac{d}{2}$, that almost surely satisfies, for every $\psi\in\C^\infty(\TT^d)$ and $t\in[0,T]$,
\[\langle v(t),\psi\rangle_s=\int_0^t\langle v(r),\Delta\psi\rangle_s\dr+\int_0^t\int_{\TT^d}s(\overline{\rho})\nabla\psi\cdot\xi,\]
where $\langle\cdot,\cdot\rangle_s\colon H^{-s}(\TT^d)\times H^{s}(\TT^d)\rightarrow\R$ is the pairing between $H^{-s}(\TT^d)$ and $H^s(\TT^d)$.
\end{definition}

\begin{prop}\label{ou_wp}Let $T\in(0,\infty)$, let $\sigma\in \C([0,1];[0,1])$ satisfy $\sigma(0)=\sigma(1)=0$, let $\xi$ be an $\R^d$-valued space-time white noise, and let $\rho_0\in L^2(\TT^d;[0,1])$.  Then there exists a unique solution of \eqref{clt_0000} in the sense of Definition~\ref{def_ou}.  \end{prop}

\begin{proof}  Let the noise $\{\xi^K\}_{K\in\N}$ be defined by Assumption~\ref{def_noise_trig} and let $\overline{\rho}$ be the unique solution of \eqref{clt_00} with initial data $\rho_0$.  Simplified versions of Theorem~\ref{thm_rks_unique} and Proposition~\ref{prop_weak_exist} (or, Theorem~\ref{thm_rks_exist}) prove that, for every $K\in\N$, there exists a unique continuous $L^2(\TT^d)$-valued, $\F_t$-adapted process $v_K\in L^2([0,T]\times\O;L^2(\TT^d))$ that satisfies the SPDE with additive noise
\[\partial_t v_K = \Delta v_K - \nabla\cdot(\sigma(\overline{\rho})\xi^K)\;\;\textrm{in}\;\;\TT^d\times(0,T)\;\;\textrm{with}\;\;v_K(\cdot,0)=0.\]
Since it follows that $\int_{\TT^d} v_K(x,t) = 0$ for every $t\in[0,T]$, let $s>\frac{d+2}{2}$, let $z_K = (-\Delta)^{-\frac{s}{2}} v_K$, and observe using the methods of Proposition~\ref{prop_ap} that
\begin{align}\label{wpwn_1}
& \E\Big[\max_{t\in[0,T]}\norm{z_K(\cdot,t)}^2_{L^2(\TT^d)}+\int_0^T\int_{\TT^d}\abs{\nabla z_K}^2\Big] \leq \E\Big[\max_{t\in[0,T]}\abs{\int_0^t\int_{\TT^d}\sigma(\overline{\rho})((-\Delta)^{-\frac{s}{2}}\nabla z_K)\cdot \dd \xi^K}\Big]
\\ \nonumber & \quad  +\frac{1}{2}\E\Big[\sum_{\abs{k}\leq K}\sum_{i=1}^d\int_0^T\int_{\TT^d}\Big(\abs{(-\Delta)^{-\frac{s}{2}}\partial_i(\sigma(\overline{\rho})e_k)}^2+\abs{(-\Delta)^{-\frac{s}{2}}\partial_i(\sigma(\overline{\rho})e'_k)}^2\Big)\Big].
\end{align}
For the first term on the righthand side of \eqref{wpwn_1}, the Burkh\"older--Davis--Gundy inequality, H\"older's inequality, Young's inequality, $s> \frac{d+2}{2}\geq \frac{3}{2}$, the orthonormality of the $L^2(\TT^d)$-basis $\{e_k,e'_k\}_{k\in\Z^d}$, and the boundedness of $\sigma$ prove that, for some $c\in(0,\infty)$ independent of $K$,
\begin{align*}
& \E\Big[\max_{t\in[0,T]}\abs{\int_0^t\int_{\TT^d}\sigma(\overline{\rho})((-\Delta)^{-\frac{s}{2}}\nabla z_K)\cdot \dd \xi^K}\Big]
\\ & \leq c\E\Big[\Big(\int_0^T\sum_{\abs{k}\leq K}\Big(\int_{\TT^d}\sigma(\overline{\rho})((-\Delta)^{-\frac{s}{2}}\nabla z_K)e_k\dx\Big)^2+\Big(\int_{\TT^d}\sigma(\overline{\rho})((-\Delta)^{-\frac{s}{2}}\nabla z_K)e'_k\dx\Big)^2\dt\Big)^\frac{1}{2}\Big]
\\ & \leq c\E\norm{\sigma(\overline{\rho})((-\Delta)^{-\frac{s}{2}}\nabla z_K)}_{L^2(\TT^d\times[0,T])^d}\leq c\E\norm{z_K}_{L^2(\TT^d\times[0,T])}.
\end{align*}
For the final term on the righthand side of \eqref{wpwn_1}, it follows from $s>\frac{d+2}{2}\geq \frac{3}{2}$ and the orthonormality of the $L^2(\TT^d)$-basis $\{e_k,e'_k\}_{k\in\Z^d}$ that, for some $c\in(0,\infty)$ independent of $K$,
\[\sum_{\abs{k}\leq K}\sum_{i=1}^d\int_0^T\int_{\TT^d}\Big(\abs{(-\Delta)^{-\frac{s}{2}}\partial_i(\sigma(\overline{\rho})e_k)}^2+\abs{(-\Delta)^{-\frac{s}{2}}\partial_i(\sigma(\overline{\rho})e'_k)}^2\Big)\leq c\norm{\sigma(\overline{\rho})}_{L^2(\TT^d\times[0,T])}^2.\]
Returning to \eqref{wpwn_1}, it follows that, for some $c\in(0,\infty)$ independent of $K$,
\[\E\Big[\max_{t\in[0,T]}\norm{z_K(\cdot,t)}^2_{L^2(\TT^d)}+\int_0^T\int_{\TT^d}\abs{\nabla z_K}^2\Big] \leq c\E \Big[\norm{z_K}_{L^2(\TT^d\times[0,T])}+\norm{\sigma(\overline{\rho})}_{L^2(\TT^d\times[0,T])}^2\Big],\]
from which it follows from Young's inequality, Gr\"onwall's inequality, and the definition of $z_K$ that there exists $c\in(0,\infty)$ independent of $K$ but depending on $T$ such that
\begin{equation}\label{wpwn_2}  \E\Big[\norm{v_K}_{L^\infty([0,T];H^{-s}(\TT^d))}^2+\norm{v_K}^2_{L^2([0,T];H^{-s+1}(\TT^d))}\Big]\leq c(1+\norm{\sigma(\overline{\rho})}_{L^2(\TT^d\times[0,T])}^2).\end{equation}
Lastly, to estimate the time-regularity, we observe distributionally that
\begin{equation}\label{wpwn_3}v_K(\cdot,t) = \int_0^t\Delta v_K-\int_0^t\nabla\cdot(\sigma(\overline{\rho})\dd\xi^K)= I^{\textrm{f.v.}}_t(\cdot)+I^{\textrm{mart.}}_t(\cdot),\end{equation}
where it follows from \eqref{wpwn_2} that the finite-variation part satisfies, for some $c\in(0,\infty)$ independent of $K$,
\begin{equation}\label{wpwn_4}\norm{I^{\textrm{f.v.}}_{\cdot}(\cdot)}_{W^{1,2}([0,T];H^{-(s+1)}(\TT^d))}\leq c\norm{v_K}_{L^2([0,T];H^{-s}(\TT^d))},\end{equation}
and, following the methods of Proposition~\ref{prop_ap}, it follows from $0\leq \sigma\leq 1$ and the Burkh\"older--Davis--Gundy inequality that for every $\beta\in(0,\nicefrac{1}{2})$ there exists $c\in(0,\infty)$ depending on $\beta$ and $T$ but independent of $K$ such that
\begin{align}\label{wpwn_5}
& \E \norm{I^{\textrm{mart.}}_{\cdot}(\cdot)}_{W^{\beta,2}([0,T];H^{-(s+1)}(\TT^d))}^2
\\ \nonumber & = \E \int_0^T\int_0^T\abs{s-t}^{-(1+2\beta)} \lVert \sum_{\abs{k}\leq K}\int_s^t\int_{\TT^d}\Big(\sigma(\overline{\rho})e_k \dd B^k_t+\sigma(\overline{\rho})e'_k \dd W^k_t\Big)\rVert_{H^{-s}(\TT^d)}^2
\\ \nonumber & \leq c\E\int_0^T\int_0^T\abs{s-t}^{-(1+2\beta)}\sum_{\abs{k}\leq K} \int_s^t\norm{\sigma(\overline{\rho})e_k}^2_{H^{-s}(\TT^d)}+\norm{\sigma(\overline{\rho})e'_k}^2_{H^{-s}(\TT^d)}
\\ \nonumber & \leq c\int_0^T\int_0^T\abs{s-t}^{-2\beta} \leq c.
\end{align}
Returning to \eqref{wpwn_3}, the embedding of $W^{1,2}$ into $W^{\beta,2}$ for $\beta\in(0,1)$, \eqref{wpwn_4}, and \eqref{wpwn_5} prove that, for every $\beta\in(0,\nicefrac{1}{2})$ there exists $c\in(0,\infty)$ depending on $\beta$ and $T$ but independent of $K$ such that
\[\E\Big[\norm{v_K}_{L^2([0,T];H^{-(s+1)}(\TT^d))}\Big]\leq c(1+\norm{v_K}_{L^2([0,T];H^{-(s+1)}(\TT^d))}).\]
It now follows from the compact embedding of $H^{-s}$ into $H^{-s'}$ whenever $s<s'\in(0,\infty)$, the Aubin--Lions--Simon lemma \cite{Aubin,pLions,Simon}, and specifically \cite[Corollary~5]{Simon}, and estimates \eqref{wpwn_2} and \eqref{wpwn_4} that the laws of the $\{v_K\}_{K\in\N}$ are tight on $L^2([0,T];H^{-s}(\TT^d))$ for every $s>\frac{d}{2}$.  A simplified version of Theorem~\ref{thm_rks_exist} proves that, after passing to the limit $K\rightarrow 0$, the $\{v_K\}_{K\in\N}$ converge in law on $L^2([0,T];H^{-s}(\TT^d))$ to an element $v\in L^2([0,T]\times\O;H^{-s}(\TT^d))$, for every $s>\frac{d}{2}$, satisfying, for every $\psi\in\C^\infty(\TT^d)$ and $\d\in(0,1)$, for almost every $t\in[0,T]$,
\begin{equation}\label{wpwn_7}\langle v(t), \psi \rangle_s  = \int_0^t\langle v(r),\Delta\psi\rangle_s\dr +\int_0^t\int_{\TT^d}\sigma(\overline{\rho})\nabla\psi\cdot\dd\xi.\end{equation}
Since both terms on the righthand side of \eqref{wpwn_7} are continuous in time, this implies that, for every $k\in\Z^d$, the pairings $t\mapsto \langle v(t),e_k\rangle_s$ and $t\mapsto \langle v(t),e'_k\rangle_s$ admit continuous modifications in $L^2([0,T])$.   The $\P$-a.s.\ boundedness of $v$ in $L^2([0,T];H^{-s}(\TT^d))$, for every $s>\frac{d}{2}$, then implies that $v$ admits a continuous, $H^{-s}$-valued modification still denoted $v$ and that $v$ satisfies \eqref{wpwn_7} for every $\psi\in \C^\infty(\TT^d)$ and $t\in[0,T]$.  This completes the proof of existence.  In order to prove uniqueness, observe that if $v$ and $\tilde{v}$ are two solutions then $w = v-\tilde{v}$ is a distributional solution of the heat equation that $\P$-a.s.\ satisfies, for every $k\in \Z^d$, for every $t\in[0,T]$,
\[\langle w(t), e_k\rangle_s = -\abs{k}^2\int_0^t\langle w(r),e_k\rangle_s \dr\;\;\textrm{and}\;\;\langle w(t), e'_k\rangle_s = -\abs{k}^2\int_0^t\langle w(r),e'_k\rangle_s \dr.\]
Gr\"onwall's inequality proves that $\langle w(t), e_k\rangle_s=\langle w(t), e'_k\rangle_s=0$ for every $k\in\Z^d$ and $t\in[0,T]$.  Since $\{e_k,e'_k\}_{k\in\Z^d}$ is an $L^2(\TT^d)$-basis, this implies $\P$-a.s.\ that $w=0$ in $L^2([0,T];H^{-s}(\TT^d))$, for every $s>\frac{d}{2}$, and completes the proof of uniqueness.  \end{proof}

\begin{thm}\label{thm_clt}  Let $T\in(0,\infty)$, let $\{\xi^K\}_{K\in \N}$ satisfy Assumption~\ref{def_noise_trig}, let $\sigma\in \C([0,1];[0,1])\cap\C^2_c((0,1)))$ satisfy $\sigma(0)=\sigma(1)=0$, let $K\in\N$, let $\ve\in(0,1)$, and let $\rho_0\in L^2(\TT^d;[0,1])$.  Let $\rho^\ve$ be the solution of \eqref{clt_0} in the sense of Definition~\ref{def_classical}, let $\overline{\rho}$ be the solution of \eqref{clt_00}, let $v^\ve = \ve^{-\nicefrac{1}{2}}(\rho^\ve-\overline{\rho})$, and let $v$ be the solution of \eqref{clt_0000} in the sense of Definition~\ref{def_ou}.  Then, for every $s>\frac{d}{2}$ there exists $c\in(0,\infty)$ independent of $\ve$ and $K$ such that
\begin{align*}
& \E\Big[\norm{v^\ve-v}^2_{L^2([0,T];H^{-s}(\TT^d))}\Big]
\\ & \leq c\Big(\Big(K^{-1}+\ve N^2_{K}\norm{\sigma'}^2_{L^\infty((0,1))}\Big)\norm{\nabla\sigma(\overline{\rho})}^2_{L^2(\TT^d\times[0,T])}+K^{(-2s+(d+2))}\norm{\sigma(\overline{\rho})}^2_{L^2(\TT^d\times[0,T])}\Big)
\\ & \quad +c\norm{\sigma(\rho^\ve)-\sigma(\overline{\rho})}^2_{L^2(\TT^d\times[0,T])}.
\end{align*}
\end{thm}
\begin{proof}  Let $w^\ve = v^\ve -v$, let $s> \frac{d+2}{2}$, and let $z^\ve = (-\Delta)^{-\frac{s}{2}}w^\ve$.  We observe that since $s>\frac{d+2}{2}$ we have $\P$-a.s. that $z^\ve\in L^2([0,T];H^1(\TT^d))$ and that $z^\ve$ solves
\[\partial_t z^\ve = \Delta z^\ve - (-\Delta)^{-\frac{s}{2}}\nabla\cdot(\sigma(\rho^\ve)\xi^{K})+(-\Delta)^{-\frac{s}{2}}\nabla\cdot(\sigma(\overline{\rho})\xi)+\frac{\sqrt{\ve}N_{K}}{2}(-\Delta)^{-\frac{s}{2}}\nabla\cdot((\sigma'(\rho^\ve))^2\nabla\rho^\ve).\]
A repetition of the $L^2$-energy estimate of Proposition~\ref{prop_ap} and the definition of $\xi^{K}$ prove that, for $\Theta\colon[0,1]\rightarrow\R$ satisfying $\Theta(0)=0$ and $\Theta'(\xi) = (\sigma'(\xi))^2$,
\begin{align}\label{dist_000}
&  \E\Big[\int_0^T\int_{\TT^d}\abs{\nabla z^\ve}^2\Big]  \leq - \frac{\sqrt{\ve} N_{K}}{2}\E\Big[\int_0^T\int_{\TT^d}((-\Delta)^{-\frac{s}{2}}\nabla \Theta(\rho))\cdot\nabla z^\ve\Big]
\\ \nonumber & \quad +\sum_{\abs{k}\leq K}\int_0^T\int_{\TT^d}((-\Delta)^{-\frac{s}{2}}\nabla\cdot((\sigma(\rho^\ve)-\sigma(\overline{\rho}))e_k))^2+((-\Delta)^{-\frac{s}{2}}\nabla\cdot((\sigma(\rho^\ve)-\sigma(\overline{\rho}))e'_k))^2
\\ \nonumber & \quad +\sum_{\abs{k}>K}\int_0^T\int_{\TT^d}((-\Delta)^{-\frac{s}{2}}\nabla\cdot(\sigma(\overline{\rho})e_k))^2+((-\Delta)^{-\frac{s}{2}}\nabla\cdot(\sigma(\overline{\rho})e'_k))^2.
\end{align}
H\"older's inequality and Young's inequality prove that the first term on the righthand side of \eqref{dist_000} satisfies, for some $c\in(0,\infty)$,
\begin{align}\label{wpwn_10}
& \frac{\sqrt{\ve} N_{K}}{2}\E\Big[\int_0^T\int_{\TT^d}((-\Delta)^{-\frac{s}{2}}\nabla \Theta(\rho))\cdot\nabla z^\ve\Big]
\\ \nonumber &  \leq c\ve N^2_{K}\norm{\sigma'}^2_{L^\infty((0,1))}\E\Big[\int_0^T\int_{\TT^d}\abs{\nabla\sigma(\rho^\ve)}^2\Big]+\frac{1}{4}\E\Big[\int_0^T\int_{\TT^d}\abs{\nabla z^\ve}^2\Big].
\end{align}
The orthonormality of the $L^2$-basis $\{e_k,e'_,\}_{k\in\Z^d}$ and $s>\frac{d+2}{2}>\frac{3}{2}$ prove that the final two terms on the righthand side of \eqref{dist_000} satisfy, for some $c\in(0,\infty)$,
\begin{align}\label{wpwn_11}
& \sum_{\abs{k}\leq K}\int_0^T\int_{\TT^d}((-\Delta)^{-\frac{s}{2}}\nabla\cdot((\sigma(\rho^\ve)-\sigma(\overline{\rho}))e_k))^2+((-\Delta)^{-\frac{s}{2}}\nabla\cdot((\sigma(\rho^\ve)-\sigma(\overline{\rho}))e'_k))^2
\\ \nonumber & \leq c\norm{\sigma(\rho^\ve)-\sigma(\overline{\rho})}^2_{L^2(\TT^d\times[0,T])},
\end{align}
and, a calculation using the definition of the $H^{-s}$-norm proves that, for some $c\in(0,\infty)$,
\begin{align}\label{fgd_10}
& \sum_{\abs{k}>K}\int_0^T\int_{\TT^d}((-\Delta)^{-\frac{s}{2}}\nabla\cdot(\sigma(\overline{\rho})e_k))^2+((-\Delta)^{-\frac{s}{2}}\nabla\cdot(\sigma(\overline{\rho})e'_k))^2
\\ \nonumber & \leq c\sum_{\abs{k}>K}\Big(\norm{\sigma(\overline{\rho})e_k}^2_{H^{-s+1}(\TT^d\times[0,T])}+\norm{\sigma(\overline{\rho})e'_k}^2_{H^{-s+1}(\TT^d\times[0,T])}\Big)
\\ \nonumber & \leq c K^{-1}\norm{\nabla\sigma(\overline{\rho})}^2_{L^2(\TT^d\times[0,T])}+cK^{(-2s+(d+2))}\norm{\sigma(\overline{\rho})}^2_{L^2(\TT^d\times[0,T])}.
\end{align}
Returning to \eqref{dist_000}, the claim now follows from estimates \eqref{wpwn_10}, \eqref{wpwn_11}, and \eqref{fgd_10}.  \end{proof}

\subsection{The CLT for the SPDE with singular coefficients}  We will first establish an $L^\infty$-estimate for the solutions of \eqref{clt_0} with singular coefficients, and then use this estimate in Theorem~\ref{thm_clt_prob} to establish the quantitative CLT in probability.

\begin{thm}\label{prop_est_infty}  Let $T\in(0,\infty)$, let $\ve\in(0,1)$, let $\xi^F$ satisfy Assumption~\ref{def_noise}, let $\rho_0\in L^2(\TT^d;[0,1])$, let $M=\esssup_{x\in\TT^d}\rho_0(x)$, and let $M'=\essinf_{x\in\TT^d}\rho_0(x)$.  Then, for the solution $\rho^\ve$ of \eqref{clt_0} in the sense of Definition~\ref{def_sol}, there exist $c,\gamma\in(0,\infty)$ independent of $\ve$ but depending on $T$ such that
\[\E\Big[\norm{(\rho^\ve-M)_+}_{L^\infty(\TT^d\times[0,T])}\Big]+\E\Big[\norm{(\rho^\ve-M')_-}_{L^\infty(\TT^d\times[0,T])}\Big]\leq c\Big(\ve\norm{F_3}\Big)^\gamma. \]
\end{thm}

\begin{proof}    Let $M = \sup_{x\in\TT^d}\rho_0(x)$ and let $\psi = (\rho-M)_+$.  A repetition of the proof of the energy estimates in Proposition~\ref{prop_ap} proves that, for $\E_{\F_0}[\cdot ] = \E[\cdot|\F_0]$, for every bounded stopping time $\tau$ taking values in $[0,T]$, we have $\P$-a.s.\ that, for $c\in(0,\infty)$ independent of $\ve$ and $\a$,
\begin{equation}\label{fgd_3} \E_{\F_0}\Big[\max_{t\in[0,\tau]}\int_{\TT^d}\psi^{\a+1}+\int_0^\tau\int_{\TT^d}\abs{\nabla\psi^\frac{\a+1}{2}}^2\Big] \leq  c\a^2\ve\norm{F_3}\E_{\F_0}\Big[\int_0^\tau\int_{\TT^d}\psi^{\a-1}\Big]. \end{equation}
It then follows from Revuz and Yor \cite[Chapter~4, Proposition~4.7, Exercise~4.30]{RevYor1999}, \eqref{fgd_3}, and H\"older's inequality that, for every $\a\in[1,\infty)$, for $n_\a = (\a+1)^{-1}$,
\begin{align}\label{in_1}
& \E\Big[\Big(\max_{t\in[0,\tau]}\int_{\TT^d}\psi^{\a+1}+\int_0^\tau\int_{\TT^d}\abs{\nabla\psi^{\frac{\a+1}{2}}}^2\Big)^\frac{1}{\a+1}\Big]\leq \frac{n_{\a}^{-n_\a}}{1-n_\a}(c\a^2\ve\norm{F_3})^{n_\a}\E\Big[\norm{\psi}_{L^{\a-1}(\TT^d\times[0,T])}\Big]^\frac{\a-1}{\a+1}.
\end{align}
We are now prepared to conclude using a Moser iteration.  We first use interpolation and the Sobolev inequality to deduce that for
\[\theta=\frac{d}{2+d}\;\;\textrm{and}\;\;q = \frac{(2+d)(\a+1)}{d},\]
we have that, for $c\in(0,\infty)$ independent of $\ve$ and $\a$ but depending on $T$,
\begin{align*}
& \norm{\psi}_{L^q(\TT^d\times[0,T])} \leq \norm{\psi}^\theta_{L^\infty([0,T];L^{\a+1}(\TT^d))}\norm{\psi}^{1-\theta}_{L^{\a+1}([0,T];L^{\frac{2_*}{2}(\a+1)}(\TT^d))}
\\ & = \norm{\psi}^\theta_{L^\infty([0,T];L^{\a+1}(\TT^d))}\norm{\psi^{\frac{\a+1}{2}}}^{\frac{2(1-\theta)}{\a+1}}_{L^2([0,T];L^{2_*}(\TT^d))}
\\ & \leq\norm{\psi}^\theta_{L^\infty([0,T];L^{\a+1}(\TT^d))}\Big(c\Big(\norm{\psi}_{L^\infty([0,T];L^{\a+1}(\TT^d))}^\frac{\a+1}{2}+\norm{\nabla \psi^{\frac{\a+1}{2}}}_{L^2([0,T];L^{2}(\TT^d))}\Big)\Big)^{\frac{2(1-\theta)}{\a+1}}.
\end{align*}
H\"older's inequality, the inequality $(x+y)^2\leq 2(x^2+y^2)$ for all $x,y\in[0,\infty)$, $\theta\in(0,1)$, $\a\in[1,\infty)$, and \eqref{in_1} prove that, for $c\in(0,\infty)$ independent of $\ve$ and $\a$,
\begin{align}\label{in_2}
& \E\Big[ \norm{\psi}_{L^q(\TT^d\times[0,T])}\Big]
\\ \nonumber  \leq & \E\Big[\norm{\psi}_{L^\infty([0,T];L^{\a+1}(\TT^d))}\Big]^\theta\E\Big[\Big(c\norm{\psi}^{\a+1}_{L^\infty([0,T];L^{\a+1}(\TT^d))}+c\norm{\nabla \psi^{\frac{\a+1}{2}}}^2_{L^2([0,T];L^{2}(\TT^d))}\Big)^\frac{1}{\a+1}\Big]^{1-\theta}
\\ \nonumber  \leq & \E\Big[\Big(c\norm{\psi}^{\a+1}_{L^\infty([0,T];L^{\a+1}(\TT^d))}+c\norm{\nabla \psi^{\frac{\a+1}{2}}}^2_{L^2([0,T];L^{2}(\TT^d))}\Big)^\frac{1}{\a+1}\Big]
\\ \nonumber  \leq & \frac{n_{\a}^{-n_\a}}{1-n_\a}(c\a^2\ve\norm{F_3})^{n_\a}\E\Big[\norm{\psi}_{L^{\a-1}(\TT^d\times[0,T])}\Big]^\frac{\a-1}{\a+1}.
\end{align}
We proceed inductively by defining
\[\alpha_0=0\;\;\textrm{and}\;\;\alpha_k = \frac{2+d}{d}\Big(\a_{k-1}+2\Big)\;\;\textrm{for every}\;\;k\in\N,\]
which implies that $\alpha_k> (1+\frac{2}{d})^k$ for every $k\in\N$, and we let $\beta_k = \alpha_{k-1}+1$.  Observe using \eqref{in_2} and the definition of $q$ that, for every $k\in\N$, for $c\in(0,\infty)$ independent of $\ve$, $\a$, and $k$,
\begin{align}\label{in_3}
& \E\Big[ \norm{\psi}_{L^{\alpha_k}(\TT^d\times[0,T])}\Big] \leq \frac{n_{\beta_k}^{-n_{\beta_k}}}{1-n_{\beta_k}}(c\beta_k^2\ve\norm{F_3})^{n_{\beta_k}}\E\Big[\norm{\psi}_{L^{\a_{k-1}}(\TT^d\times[0,T])}\Big]^\frac{\beta_k-1}{\beta_k+1}
\\ \nonumber & \leq \prod_{r=1}^k \Big(\frac{n_{\beta_r}^{-n_{\beta_r}}}{1-n_{\beta_r}}(c\beta_r)^{2 n_{\beta_r}}\Big)^{\prod_{s=r+1}^k\frac{\beta_s-1}{\beta_s+1}}\times\Big(\ve\norm{F_3}\Big)^{\sum_{r=1}^kn_{\beta_r}\prod_{s=r+1}^k\frac{\beta_s-1}{\beta_s+1}}.
\end{align}
Since by definition $\beta_k > (1+\frac{2}{d})^{k-1}+1$ for every $k\in[2,\infty)$, we have that
\[\liminf_{N\rightarrow\infty}\log\Big(\prod_{s=2}^N\frac{\beta_s-1}{\beta_s+1}\Big) = \liminf_{N\rightarrow\infty}\sum_{s=2}^N \log\Big(1-\frac{2}{\beta_s+1}\Big)>-\infty.\]
Therefore, by the dominated convergence theorem, there exists $\gamma\in(0,1)$ such that
\[\lim_{k\rightarrow\infty}\Big(\sum_{r=1}^kn_{\beta_r}\prod_{s=r+1}^k\frac{\beta_s-1}{\beta_s+1}\Big)=\gamma.\]
We have similarly that, by definition of $n_{\beta_r}$,
\[\limsup_{k\rightarrow\infty} \prod_{r=1}^k \Big(\frac{n_{\beta_r}^{-n_{\beta_r}}}{1-n_{\beta_r}}(c\beta_r)^{4 n_{\beta_r}}\Big)^{\prod_{s=r+1}^k\frac{\beta_s-1}{\beta_s+1}}\leq \limsup_{k\rightarrow\infty} \prod_{r=1}^k \Big(\frac{(1+\beta_r)^{1+n_{\beta_r}}}{\beta_r}(c\beta_r)^{4 n_{\beta_r}}\Big),\]
and that, for every $k\in\N$,
\[\log\Big(\prod_{r=1}^k \Big(\frac{(1+\beta_r)^{1+n_{\beta_r}}}{\beta_r}(c\beta_r)^{4 n_{\beta_r}}\Big)\Big) = \sum_{r=1}^k\Big(\log(1+\beta_r^{-1})+n_{\beta_r}\log(1+\beta_r)+4n_{\beta_r}\log(c\beta_r)\Big).\]
It follows from the facts that $\beta_r\geq \alpha_{r-1}+1$, $\alpha_0=0$, and $\alpha_{r}>(1+\frac{2}{d})^{r}$ that
\[\limsup_{k\rightarrow\infty}\Big(\sum_{r=1}^k\Big(\log(1+\beta_r^{-1})+n_{\beta_r}\log(1+\beta_r)+4n_{\beta_r}\log(c\beta_r)\Big)\Big)<\infty,\]
which proves that there exists $c\in(0,\infty)$ independent of $\ve$ and $\eta$ such that
\[\limsup_{k\rightarrow\infty} \prod_{r=1}^k \Big(\frac{n_{\beta_r}^{-n_{\beta_r}}}{1-n_{\beta_r}}(c\beta_r)^{4 n_{\beta_r}}\Big)^{\prod_{s=r+1}^k\frac{\beta_s-1}{\beta_s+1}}\leq c,\]
and, therefore, after passing to the limit in $k\rightarrow\infty$ in \eqref{in_3}, we conclude that
\[\E\Big[ \norm{\psi}_{L^\infty(\TT^d\times[0,T])}\Big] \leq c\Big(\ve\norm{F_3}\Big)^\gamma,\]
which completes the proof of the upper bound.  The lower bound is obtained by letting $M' = \inf_{x\in\TT^d}\rho_0(x)$ and considering $\psi(x,t) = (\rho(x,t)-M')_- = -\max(0, (M'-\rho))$.  \end{proof}

\begin{thm}\label{thm_clt_prob}  Let $T\in(0,\infty)$, let $\{\xi^K\}_{K\in\N}$ be the noise defined in Definition~\ref{def_noise_trig}, let $K\in\N$, let $\ve\in(0,1)$, and let $\rho_0\in L^2(\TT^d)$ satisfy $\d\leq\rho_0\leq 1-\d$ for some $\d\in(0,\nicefrac{1}{2})$.  Let $\rho^\ve$ be the solution of \eqref{clt_0} in the sense of Definition~\ref{def_sol}, let $\overline{\rho}$ be the solution of \eqref{clt_00}, and let $v^\ve = \ve^{-\nicefrac{1}{2}}(\rho^\ve-\overline{\rho})$.  Then, for every $s>\frac{d}{2}$ there exist $c,\gamma\in(0,\infty)$ such that, for every $\eta\in(0,1)$,
\begin{align*}
& \P\Big[\norm{v^\ve - v}_{L^2([0,T];H^{-s}(\TT^d))}\geq\eta\Big]
\\ & \leq c\eta^{-2}\Big(K^{-1}+\ve N^2_{K}\delta^{-1}+K^{(-2s+(d+2))}+\delta^{-2}\ve^2N_K^2+\delta^{-1}\ve M_K\Big)+c\delta^{-1}\Big(\ve M_K\Big)^\gamma.
\end{align*}
\end{thm}

\begin{proof}  Let $\sigma\in \C^2_c((0,1)))\cap\C([0,1];[0,1])$ be an arbitrary function satisfying that
\begin{equation}\label{fin_0}\sigma(\xi)=\sqrt{\xi(1-\xi)}\;\textrm{for every}\;\xi\in[\nicefrac{\d}{2},1-\nicefrac{\d}{2}]\;\textrm{and}\;\abs{\sigma'(\xi)}\leq 2\d^{-\frac{1}{2}}\;\textrm{for every}\;\xi\in(0,1),\end{equation}
and satisfying that
\begin{equation}\label{fin_1}\abs{\sigma'(\xi)}\leq \frac{2}{\sqrt{\xi(1-\xi)}}\;\;\textrm{for every}\;\;\xi\in(0,1).\end{equation}
For every $\ve\in(0,1)$ let $\tilde{v}^\ve = \ve^{-\nicefrac{1}{2}}(\tilde{\rho}^\ve-\overline{\rho})$ for the $\tilde{\rho}^\ve$ the solution of \eqref{clt_1000} defined by this $\sigma$ and let $v$ denote the Ornstein--Uhlenbeck process
\[\partial_t v = \Delta v-\nabla\cdot(\sqrt{\overline{\rho}(1-\overline{\rho})}\xi) = \Delta v-\nabla\cdot(\sigma(\overline{\rho})\xi)\;\;\textrm{in}\;\;\TT^d\times(0,T)\;\;\textrm{with}\;\;v = 0\;\;\textrm{on}\;\;\TT^d\times\{0\},\]
where here we use the fact that, since the comparison principle proves that $\d\leq \overline{\rho}\leq 1-\d$ on $\TT^d\times[0,T]$, we have by \eqref{fin_0} that $\sigma(\overline{\rho})=\sqrt{\overline{\rho}(1-\overline{\rho})}$.  We then observe through an exact repetition of the pathwise uniqueness proof of Theorem~\ref{thm_rks_unique} that
\[\rho^\ve = \tilde{\rho^\ve}\;\;\textrm{in}\;\;L^2(\TT^d\times[0,T]),\]
on the event $(\mathcal{S}\cap\tilde{\mathcal{S}})\subseteq\Omega$ defined by
\[\mathcal{S}=\{\norm{\rho^\ve}_{L^\infty(\TT^d\times[0,T])}\leq 1-\nicefrac{\d}{2}\;\textrm{and}\;\norm{1-\rho^\ve}_{L^\infty(\TT^d\times[0,T])}\leq 1-\nicefrac{\d}{2}\},\]
and
\[\tilde{\mathcal{S}}=\{\norm{\tilde{\rho}^\ve}_{L^\infty(\TT^d\times[0,T])}\leq 1-\nicefrac{\d}{2}\;\textrm{and}\;\norm{1-\rho^\ve}_{L^\infty(\TT^d\times[0,T])}\leq 1-\nicefrac{\d}{2}\}.\]
Therefore, for every $\eta\in(0,1)$, we have that
\begin{equation}\label{cltp_0}  \P\Big[\norm{v^\ve - v}_{L^2([0,T];H^{-s}(\TT^d))}\geq\eta\Big] \leq \P\Big[\norm{\tilde{v}^\ve - v}_{L^2([0,T];H^{-s}(\TT^d))}>\eta\Big]+\P[\mathcal{S}^c]+\P[\tilde{\mathcal{S}}^c]. \end{equation}
Since it follows from \eqref{fin_1}, $\rho_0\in L^2(\TT^d;[0,1])$, and the entropy estimate of Proposition~\ref{prop_ap} that there exists $c\in(0,\infty)$ independent of $\ve\in(0,1)$ such that
\[\E\Big[\int_0^T\int_{\TT^d}\abs{\nabla\sigma(\rho^\ve)}^2\Big]\leq c,\]
Theorem~\ref{thm_clt}, Chebyshev's inequality, \eqref{fin_0}, and $0\leq \sigma\leq 1$ prove that, for some $c\in(0,\infty)$ independent of $\ve$, $\eta$, and $K$,
\begin{align*}
&  \P\Big[\norm{\tilde{v}^\ve - v}_{L^2([0,T];H^{-s}(\TT^d))}>\eta\Big]
\\ & \leq c\eta^{-2}\Big(K^{-1}+\ve N^2_{K}\norm{\sigma'}^2_{L^\infty((0,1))}+K^{(-2s+(d+2))}+\norm{\sigma(\rho^\ve)-\sigma(\overline{\rho})}^2_{L^2(\TT^d\times[0,T])}\Big)
\\ & \leq c\eta^{-2}\Big(K^{-1}+\ve N^2_{K}\delta^{-1}+K^{(-2s+(d+2))}+\delta^{-1}\norm{\rho^\ve-\overline{\rho}}^2_{L^2(\TT^d\times[0,T])}\Big).
\end{align*}
The quantitative law of large numbers of Proposition~\ref{prop_quant}, \eqref{fin_0}, and $N_K\leq M_K$ then prove that, for $c\in(0,\infty)$ independent of $\ve$ but depending on $T$,
\[\E\big[\norm{\rho^\ve-\overline{\rho}}^2_{L^2(\TT^d\times[0,T])}\big] \leq c\Big(\delta^{-1}\ve^2 N^2_K+ \ve M_K\Big).\]
Therefore,
\begin{align}\label{cltp_2}
& \P\Big[\norm{\tilde{v}^\ve - v}_{L^2([0,T];H^{-s}(\TT^d))}>\eta\Big]
\\ \nonumber & \leq c\eta^{-2}\Big(K^{-1}+\ve N^2_{K}\delta^{-1}+K^{(-2s+(d+2))}+\delta^{-2}\ve^2N_K^2+\delta^{-1}\ve M_K\Big).
\end{align}
For the final two terms on the righthand side of \eqref{cltp_0}, Theorem~\ref{prop_est_infty}, which applies equally to the approximating equation thanks to \eqref{fin_0} and \eqref{fin_1}, and Chebyshev's inequality prove that, for some $c,\gamma\in(0,\infty)$ independent of $\ve$ and $K$,
\begin{equation}\label{cltp_3}\P[\mathcal{S}^c]+\P[\tilde{\mathcal{S}}^c]\leq c\delta^{-1}\Big(\ve M_K\Big)^\gamma.\end{equation}
Returning to \eqref{cltp_0}, it follows from \eqref{cltp_2} and \eqref{cltp_3} that
\begin{align*}
& \P\Big[\norm{v^\ve - v}_{L^2([0,T];H^{-s}(\TT^d))}\geq\eta\Big]
\\ & \leq c\eta^{-2}\Big(K^{-1}+\ve N^2_{K}\delta^{-1}+K^{(-2s+(d+2))}+\delta^{-2}\ve^2N_K^2+\delta^{-1}\ve M_K\Big)+c\delta^{-1}\Big(\ve M_K\Big)^\gamma,
\end{align*}
which completes the proof. \end{proof}

\begin{cor}\label{cor_clt_prob}  Let $T\in(0,\infty)$, let $\{\xi^K\}_{K\in\N}$ be the noise defined in Definition~\ref{def_noise_trig}, let $\alpha_d=(\frac{1}{2(d+2)}\wedge\frac{1}{2d})$ and let $K(\ve) = \lfloor\ve^{-\a_d}\rfloor$ for every $\ve\in(0,1)$, and let $\rho_0\in L^2(\TT^d)$ satisfy $\d\leq\rho_0\leq 1-\d$ for some $\d\in(0,\nicefrac{1}{2})$.  Let $\rho^\ve$ be the solution of \eqref{clt_0} corresponding to $(\ve,K(\ve))$ in the sense of Definition~\ref{def_sol}, let $\overline{\rho}$ be the solution of \eqref{clt_00}, and let $v^\ve = \ve^{-\nicefrac{1}{2}}(\rho^\ve-\overline{\rho})$.  Then, for every $s>\frac{d}{2}$ there exist $c,\gamma\in(0,\infty)$ such that, for every $\eta\in(0,1)$,
\[\P\Big[\norm{v^\ve - v}_{L^2([0,T];H^{-s}(\TT^d))}\geq\eta\Big] \leq c\eta^{-2}\delta^{-2}\Big(\ve^{\alpha_d}+\ve^{\alpha_d(2s-(d+2))}\Big)+c\delta^{-1}\ve^\frac{\gamma}{2}.\]
\end{cor}

\begin{proof}  The proof is an immediate consequence of Theorem~\ref{thm_clt_prob} and the bounds $N_K\leq cK^d$ and $M_K\leq cK^{d+2}$ for some $c\in(0,\infty)$ independent of $K\in\N$.\end{proof}

\section{The large deviations principle}\label{sec_old}

In this section, we will identify a scaling regime for which the solutions of the SPDE 
\begin{equation}\label{new_1}\partial_t \rho = \Delta\rho - \sqrt{\ve}\nabla\cdot (\sqrt{\rho(1-\rho)}\circ\xi^{K(\ve)}),\end{equation}
for the spectral approximations $\{\xi^K\}_{K\in\N}$ defined in Assumption~\ref{def_noise_trig}, satisfy a large deviations principle with rate functions $I_{\rho_0}$ defined by
\begin{equation}\label{ldp_1}I_{\rho_0}(\mu)=\frac{1}{2}\big\{\norm{g}^2_{L^2}\colon\mu=\rho\,dx,\,\partial_{t}\rho=\Delta\rho-\nabla\cdot(\sqrt{\rho(1-\rho)}g)\;\;\textrm{with}\;\;\rho(\cdot,0)=\rho_0\big\}.\end{equation}
We emphasize that the these techniques do not rely on the specific choice of noise $\{\xi^K\}_{K\in\N}$ and would apply without change, for example, to spatial convolutions $\{\xi^\d=(\xi*\kappa^\ve)\}_{\d\in(0,1)}$ of white noise, or to a general sequence satisfying Assumption~\ref{def_noise} converging in distribution to a space-time white noise.  We make this choice in order to precisely quantify the scaling in $\ve$ and $K$.  In what follows, we first analyze the skeleton equation appearing in the definition of the rate function in Section~\ref{sec_skeleton} and prove the LDP in Section~\ref{sec_ldp_old}.

\subsection{The skeleton equation}\label{sec_skeleton} In this section, we will prove the well-posedness of the skeleton equation
\begin{equation}\label{skel_1}\partial_t \rho = \Delta\rho-\nabla\cdot(\sqrt{\rho(1-\rho)}g)\;\;\textrm{in}\;\;\TT^d\times(0,T)\;\;\textrm{with}\;\;\rho=\rho_0\;\;\textrm{on}\;\;\TT^d\times\{0\},\end{equation}
for $L^2$-integrable controls $g$ and initial data $\rho_0$ in the space $L^2(\TT^d;[0,1])$.  In Definition~\ref{skel_def}, we define a weak solution to \eqref{skel_1}.  In Proposition~\ref{prop_unique}, we prove that weak solutions are unique.  In Proposition~\ref{prop_exist}, we prove that weak solutions exist.  Finally, in Proposition~\ref{prop_weak_strong}, we prove the strong continuity of solution $\rho$ with respect to weak convergence of the control $g$.

\begin{definition}\label{skel_def}  Let $T\in(0,\infty)$, $\rho_0\in L^2(\TT^d;[0,1])$, and $g\in L^2(\TT^d\times[0,T])^d$.  A \emph{weak solution} of \eqref{skel_1} is a function $\rho\in L^2([0,T];H^1(\TT^d))\cap \C([0,T];L^2(\TT^d;[0,1]))$ that satisfies, for every $t\in[0,T]$ and $\psi\in \C^\infty(\TT^d)$,
\[\int_{\TT^d}\rho(x,t)\psi(x)\dx = \int_{\TT^d}\rho_0 \psi\dx-\int_0^t\int_{\TT^d}\nabla\rho\cdot\nabla\psi\dx\ds +\int_0^t\int_{\TT^d}\sqrt{\rho(1-\rho)}g\cdot\nabla\psi\dx\ds.\]
\end{definition}

\begin{prop}\label{prop_unique}  Let $T\in(0,\infty)$, let $\rho_{0,1},\rho_{0,2}\in L^2(\TT^d;[0,1])$, and let $g\in L^2(\TT^d\times[0,T])^d$.  Let $\rho_1,\rho_2\in L^2([0,T];H^1(\TT^d))$ be weak solutions of \eqref{skel_1} in the sense of Definition~\ref{skel_def} with initial data $\rho_{0,1}, \rho_{0,2}$ and control $g$.  Then,
\[\max_{t\in[0,T]}\norm{\rho_1(\cdot,t)-\rho_2(\cdot,t)}_{L^1(\TT^d)}\leq\norm{\rho_{0,1}-\rho_{0,2}}_{L^1(\TT^d)}.\]
\end{prop}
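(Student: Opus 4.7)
The plan is the standard $L^1$-contraction argument for quasilinear parabolic equations, combined with an observation that compensates for the missing Lipschitz regularity of the transport nonlinearity. Let $w := \rho_1 - \rho_2$ and $\sigma(r) := \sqrt{r(1-r)}$. Subtracting the weak formulations of \eqref{skel_1} for $\rho_1$ and $\rho_2$ shows that $w \in L^2([0,T];H^1(\TT^d))$ solves, distributionally,
\[
\partial_t w = \Delta w - \nabla\cdot\bigl((\sigma(\rho_1)-\sigma(\rho_2))g\bigr),
\]
with initial data $\rho_{0,1}-\rho_{0,2}$. Since $\sigma$ is bounded on $[0,1]$ and $g \in L^2$, the right-hand side lies in $L^2([0,T];H^{-1}(\TT^d))$, so $\partial_t w\in L^2([0,T];H^{-1}(\TT^d))$ and in particular $w\in C([0,T];L^2(\TT^d))$, which is enough to upgrade the contraction from ``for a.e. $t$'' to ``for every $t$'' at the end.

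Next I fix a family $\{\beta_\delta\}_{\delta>0}$ of smooth convex approximations of $|\cdot|$ with $\beta_\delta\to|\cdot|$ uniformly, $\|\beta_\delta'\|_{L^\infty}\leq 1$, and $\beta_\delta''$ supported in $[-\delta,\delta]$ with $\beta_\delta''\leq C/\delta$. The Gelfand-triple chain rule applied to $\beta_\delta\circ w$, together with Young's inequality used on the transport term to absorb half of the dissipation, yields
\[
\int_{\TT^d}\beta_\delta(w(\cdot,t))\dx \leq \int_{\TT^d}\beta_\delta(\rho_{0,1}-\rho_{0,2})\dx + \tfrac{1}{2}\int_0^t\!\!\int_{\TT^d}\beta_\delta''(w)\,(\sigma(\rho_1)-\sigma(\rho_2))^2\,|g|^2\dx\ds.
\]

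The crucial point, and the main obstacle to a purely classical argument, is that $\sigma$ has a derivative that blows up at the endpoints of $[0,1]$ and is therefore not Lipschitz. The key observation is that $\sigma^2(r)=r(1-r)$ \emph{is} Lipschitz, and since $\sigma\geq 0$ the elementary inequality $(a-b)^2\leq|a^2-b^2|$ for $a,b\geq 0$ gives the pointwise bound
\[
(\sigma(\rho_1)-\sigma(\rho_2))^2 \leq |\sigma(\rho_1)^2-\sigma(\rho_2)^2| = |\rho_1-\rho_2|\,|1-(\rho_1+\rho_2)| \leq |w|.
\]
Combined with $\beta_\delta''(w)\leq C/\delta$ on $\{|w|\leq\delta\}$, the integrand in the remainder is bounded uniformly by $C|g|^2$ and tends pointwise to zero as $\delta\to 0$: on $\{w\neq 0\}$ because $\beta_\delta''(w)=0$ for $\delta<|w|$, and on $\{w=0\}$ because $\sigma(\rho_1)=\sigma(\rho_2)$ there. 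Since $|g|^2$ is integrable on $\TT^d\times[0,T]$, dominated convergence sends the remainder to zero, and the uniform convergence $\beta_\delta\to|\cdot|$ lets me pass to the limit on the left-hand side, giving
\[
\int_{\TT^d}|\rho_1(\cdot,t)-\rho_2(\cdot,t)|\dx \leq \int_{\TT^d}|\rho_{0,1}-\rho_{0,2}|\dx
\]
for every $t\in[0,T]$, which is the asserted contraction. The only non-routine ingredient is the matching of the $\delta^{-1}$ scaling of $\beta_\delta''$ with the $|w|$ modulus that comes from the Lipschitz regularity of $\sigma^2$.
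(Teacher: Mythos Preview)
Your proof is correct and follows essentially the same strategy as the paper: approximate the absolute value by a smooth convex $\beta_\delta$, use Young's inequality to absorb the gradient term, and exploit the $\tfrac12$-H\"older regularity of $\sigma$ (your inequality $(\sigma(\rho_1)-\sigma(\rho_2))^2\leq|w|$ is exactly this, phrased via the Lipschitz regularity of $\sigma^2$) to make the remainder vanish as $\delta\to 0$ by dominated convergence. The only technical difference is that the paper justifies the chain rule by first mollifying $\rho_1,\rho_2$ in space and passing $\varepsilon\to 0$ before $\delta\to 0$, whereas you invoke the Gelfand-triple chain rule directly; both are standard and lead to the identical energy inequality.
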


\begin{proof}  For every $\ve\in(0,1)$ let $\eta_d^\ve$ be a standard convolution kernel on $\TT^d$ of scale $\ve$ and let $\eta_1^\ve$ be a standard convolution kernel on $\R$ of scale $\ve$.  For every $i\in\{1,2\}$ let $\rho^\ve_i = \rho^i*\eta_d^\ve$.  Let $a\colon \R\rightarrow[0,\infty)$ denote the absolute value function $a(x)=\abs{x}$ and for every $\d\in(0,1)$ let $a^\d = a*\eta_1^\d$.

Definition~\ref{skel_def} implies that, for every $i\in\{1,2\}$, as functions in $L^2(\TT^d\times[0,T])$,
\begin{equation}\label{skel_2}\partial_t \rho^\ve_i(x,t)= -\int_{\TT^d}\nabla_y\eta^\ve_d(y-x)\cdot\nabla\rho_i(y,t)+\sqrt{\rho_i(y,t)(1-\rho_i(y,t))}g(y,t)\cdot\nabla_y\eta^\ve_d(y-x)\dy.\end{equation}
Therefore, for every $\ve,\d\in(0,1)$,
\[\partial_t\int_{\TT^d}a^\d(\rho^\ve_1(x,t)-\rho^\ve_2(x,t))\dx = \int_{\TT^d}\sgn^\d(\rho^\ve_1(x,t)-\rho^\ve_2(x,t))\partial_t(\rho^\ve_1(x,t)-\rho^\ve_2(x,t))\dx,\]
where $\sgn^\d=\sgn*\eta_1^\d$ for the left-continuous sign function $\sgn\colon\R\rightarrow\{-1,1\}$.  It follows from \eqref{skel_2} that, after integrating by parts in the $x$-variable, for every $\ve,\d\in(0,1)$,
\begin{align*}
& \partial_t\int_{\TT^d}a^\d(\rho^\ve_1(x,t)-\rho^\ve_2(x,t))\dx
\\ & = -2\int_{(\TT^d)^2}\eta_1^\d(\rho^\ve_1(x,t)-\rho^\ve_2(x,t))\nabla_x(\rho^\ve_1(x,t)-\rho^\ve_2(x,t))\cdot \nabla_y(\rho_1(y,t)-\rho_2(y,t))\eta_d^\ve(x-y)\dx\dy
\\  & + \int_{(\TT^d)^2} 2\eta_1^\d(\rho^\ve_1(x,t)-\rho^\ve_2(x,t))\sqrt{\rho_1(y,t)(1-\rho_1(y,t))}\nabla_x(\rho^\ve_1(x,t)-\rho^\ve_2(x,t))\cdot g(y,t)\eta_d^\ve(x-y)\dx\dy
\\ & - \int_{(\TT^d)^2} 2\eta_1^\d(\rho^\ve_1(x,t)-\rho^\ve_2(x,t))\sqrt{\rho_2(y,t)(1-\rho_2(y,t))}\nabla_x(\rho^\ve_1(x,t)-\rho^\ve_2(x,t))\cdot g(y,t)\eta_d^\ve(x-y)\dx\dy.
\end{align*}
After passing to the limit $\ve\rightarrow 0$, using the $H^1$-regularity and boundedness of weak solutions, for every $\d\in(0,1)$,
\begin{align*}
& \partial_t\int_{\TT^d}a^\d(\rho_1(x,t)-\rho_2(x,t))\dx
\\ & = -\int_{\TT^d}2\eta_1^\d(\rho_1-\rho_2)\abs{\nabla(\rho_1-\rho_2)}^2\dx
\\ & \quad + \int_{\TT^d} 2\eta_1^\d(\rho_1-\rho_2)\Big(\sqrt{\rho_1(1-\rho_1)}-\sqrt{\rho_2(1-\rho_2)}\Big)\nabla\Big(\rho_1-\rho_2\Big)\cdot g \dx.
\end{align*}
H\"older's inequality and Young's inequality prove that, for every $\d\in(0,1)$,
\[ \partial_t\int_{\TT^d}a^\d(\rho_1(x,t)-\rho_2(x,t))\dx \leq \int_{\TT^d}\eta_1^\d(\rho_1-\rho_2)\Big(\sqrt{\rho_1(1-\rho_1)}-\sqrt{\rho_2(1-\rho_2)}\Big)^2\abs{g}^2\dx.\]
Since there exists $c\in(0,\infty)$ independent of $\d\in(0,1)$ such that $\abs{\eta_1^\d}\leq \nicefrac{c}{\d}$, the definition of the convolution kernel and the H\"older regularity of the function $x\in[0,1]\mapsto\sqrt{x(1-x)}$ proves that there exists $c\in(0,\infty)$ such that
\[\abs{\eta_1^\d(\rho_1-\rho_2)\Big(\sqrt{\rho_1(1-\rho_1)}-\sqrt{\rho_2(1-\rho_2)}\Big)^2}\leq c\mathbf{1}_{\{0<\abs{\rho_1-\rho_2}<c\d\}}.\]
Therefore, for $c\in(0,\infty)$ independent of $\d\in(0,1)$,
\[ \partial_t\int_{\TT^d}a^\d(\rho_1(x,t)-\rho_2(x,t))\dx \leq \frac{c}{2}\int_{\{0<\abs{\rho_1-\rho_2}<c\d\}}\abs{g}^2\dx.\]
The $L^2$-integrability of $g$, the definition of $a$, and the dominated convergence theorem prove that, in the sense of distributions, after passing to the limit $\d\rightarrow 0$,
\[\partial_t \int_{\TT^d}\abs{\rho_1(x,t)-\rho_2(x,t)}\dx \leq 0,\]
which completes the proof.  \end{proof}

\begin{prop}\label{prop_exist}   Let $T\in(0,\infty)$.  For every $\rho_0\in L^2(\TT^d;[0,1])$ and $g\in L^2(\TT^d\times[0,T])^d$ there exists a weak solution of \eqref{skel_1} in the sense of Definition~\ref{skel_def}.  Furthermore, for some $c\in(0,\infty)$,
\[\norm{\rho}^2_{L^\infty([0,T];L^2(\TT^d))}+\norm{\nabla\rho}_{L^2(\TT^d\times[0,T])}^2 \leq c\Big(\norm{\rho_0}^2_{L^2(\TT^d)}+\norm{g}^2_{L^2(\TT^d\times[0,T])}\Big),\]
and
\[\norm{\partial_t\rho}_{L^2([0,T];H^{-1}(\TT^d))}\leq c(\norm{\rho_0}_{L^2(\TT^d)}+\norm{g}_{L^2(\TT^d\times[0,T])^d}).\]
\end{prop}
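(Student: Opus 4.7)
My plan is to regularize the nonlinearity $\sigma(\rho):=\sqrt{\rho(1-\rho)}$, extended by zero to $\R$, to a smooth, bounded, Lipschitz function $\sigma_\delta$, construct solutions $\rho_\delta$ of the regularized equation by a Faedo--Galerkin method, and pass to the limit $\delta\to 0$. Concretely, I would set $\sigma_\delta=\sigma\ast\eta_1^\delta$ for a 1-D mollifier $\eta_1^\delta$ at scale $\delta$; this yields $\sigma_\delta$ smooth, bounded by $\tfrac12$, supported in $[-\delta,1+\delta]$, and --- crucially --- satisfying the H\"older-type estimate $|\sigma_\delta(x)|\le c\sqrt\delta$ for $x\notin[0,1]$, which follows from the $\tfrac12$-H\"older continuity of $\sigma$.

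For fixed $\delta$, I would solve $\partial_t\rho_\delta=\Delta\rho_\delta-\nabla\cdot(\sigma_\delta(\rho_\delta)g)$ with initial condition $\rho_0$ by projecting onto the span of the first $M$ eigenfunctions of $-\Delta$ on $\TT^d$. The resulting ODE system is locally solvable by Cauchy--Lipschitz (thanks to the Lipschitz regularity of $\sigma_\delta$) and globally extendable via the a priori energy estimate: testing the projected equation against $\rho_\delta^M$ and applying Young's inequality with $\|\sigma_\delta\|_{L^\infty(\R)}\le\tfrac12$ yields
\[
\tfrac12\,\partial_t\|\rho^M_\delta\|_{L^2}^2 + \tfrac12\|\nabla\rho^M_\delta\|_{L^2}^2 \le c\|g\|_{L^2}^2,
\]
whence $\|\rho^M_\delta\|^2_{L^\infty L^2}+\|\nabla\rho^M_\delta\|^2_{L^2 L^2}\le c(\|\rho_0\|_{L^2}^2+\|g\|_{L^2}^2)$ uniformly in $M$ and $\delta$, and the $L^2([0,T];H^{-1}(\TT^d))$-bound on $\partial_t\rho^M_\delta$ is read directly off the equation. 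Aubin--Lions then gives strong $L^2$-compactness and weak $H^1$-compactness along a subsequence $M\to\infty$; the continuity of $\sigma_\delta$ identifies the nonlinearity in the limit, producing a weak solution $\rho_\delta$ of the regularized equation. The same uniform bounds permit the passage $\delta\to 0$ by another application of Aubin--Lions, and the uniform convergence $\sigma_\delta\to\sigma$ on $\R$ together with strong $L^2$-convergence of $\rho_\delta$ identifies $\sigma_\delta(\rho_\delta)\to\sqrt{\rho(1-\rho)}$ in $L^2$, once the pointwise bounds are available.

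The main obstacle, and the reason the regularization must be chosen carefully, is establishing the bounds $0\le\rho\le 1$. I would handle this by Stampacchia truncation: testing the Galerkin equation against a smooth approximation of $(\rho_\delta-1)_+$ (justified by the $H^1$-regularity) and exploiting $|\sigma_\delta(\rho_\delta)|\le c\sqrt\delta$ on $\{\rho_\delta>1\}$, Young's inequality yields
\[
\tfrac12\,\partial_t\|(\rho_\delta-1)_+\|_{L^2}^2 + \tfrac12\|\nabla(\rho_\delta-1)_+\|_{L^2}^2 \le c\delta\|g\|_{L^2}^2.
\]
Since $(\rho_0-1)_+=0$, integration in time gives $\|(\rho_\delta-1)_+\|^2_{L^\infty L^2}\le c\delta\|g\|^2_{L^2([0,T]\times\TT^d)}\to 0$; the analogous argument with $(\rho_\delta)_-$ produces $\rho\ge 0$, and both bounds survive the subsequential limit. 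The quantitative energy and $H^{-1}$ estimates of the proposition then follow by weak lower-semicontinuity of the norms, and the $L^1$-contraction of Proposition~\ref{prop_unique} upgrades subsequential to full convergence.
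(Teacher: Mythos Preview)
Your approach is correct but differs structurally from the paper's. The paper bypasses both the Galerkin scheme and the regularization of the nonlinearity: it works directly with the bounded, continuous extension $s$ of $\sqrt{x(1-x)}$ by zero outside $[0,1]$ and obtains existence via a Schauder fixed-point argument on the map $\rho\mapsto S(\rho)$, where $S(\rho)$ solves the \emph{linear} problem $\partial_t S(\rho)=\Delta S(\rho)-\nabla\cdot(s(\rho)g)$; compactness of $S$ comes from the same energy and $H^{-1}$ bounds you derive, combined with Aubin--Lions--Simon. Because $s$ vanishes identically off $[0,1]$, the maximum-principle step is then exact rather than asymptotic: the formal identity $\partial_t\int(\rho-1)_+\,dx=-\int\delta_0(\rho-1)|\nabla\rho|^2+\int\delta_0(\rho-1)s(\rho)g\le 0$ holds with no error term (after the same mollification in the $\rho$-variable as in Proposition~\ref{prop_unique}). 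Your regularization-plus-Galerkin route is more constructive and avoids the Schauder theorem, at the price of a double limit and a Stampacchia estimate that only closes as $\delta\to 0$; the paper's route is shorter and gets the pointwise bounds in one shot. One minor slip in your write-up: the truncation test must be carried out at the level of $\rho_\delta$ (after $M\to\infty$), since $(\rho_\delta^M-1)_+$ does not lie in the Galerkin space; your phrase ``testing the Galerkin equation'' is therefore misleading, though the surrounding text (and your appeal to $H^1$-regularity) makes clear you intend the continuum $\rho_\delta$-equation.
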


\begin{proof}  Let $s\colon\R\rightarrow[0,\nicefrac{1}{2}]$ be defined by
\[s(x)=\sqrt{x(1-x)}\;\;\textrm{if}\;\;x\in[0,1]\;\;\textrm{and}\;\;s(x)=0\;\;\textrm{if}\;\;x\notin[0,1].\]
Let $S\colon L^2(\TT^d\times[0,T])\rightarrow L^2([0,T];H^1(\TT^d))$ denote the solution map
\[\partial_t S(\rho) = \Delta S(\rho)-\nabla\cdot (s(\rho)g)\;\;\textrm{in}\;\;\TT^d\times(0,T)\;\;\textrm{with}\;\;S(v)=\rho_0\;\;\textrm{on}\;\;\TT^d\times\{0\}.\]
The boundedness of $s$, H\"older's inequality, and Young's inequality prove that, for some $c\in(0,\infty)$,
\begin{equation}\label{skel_8}\norm{S(\rho)}_{L^2([0,T];H^1(\TT^d))} \leq c(\norm{\rho_0}_{L^2(\TT^d)}+\norm{g}_{L^2(\TT^d\times [0,T];\R^d)}),\end{equation}
and it follows from \eqref{skel_8} and the equation that, for some $c\in(0,\infty)$,
\begin{equation}\label{skel_9}\norm{\partial_tS(\rho)}_{L^2([0,T];H^{-1}(\TT^d))}\leq c(\norm{\rho_0}_{L^2(\TT^d)}+\norm{g}_{L^2(\TT^d\times [0,T];\R^d)}).\end{equation}
The compact embedding $H^1(\TT^d)\hookrightarrow L^2(\TT^d)$, \eqref{skel_8}, \eqref{skel_9}, and the Aubin--Lions--Simons lemma \cite{Aubin,pLions,Simon} prove that the image of $S$ lies in a compact subset of $L^2(\TT^d\times[0,T])$.  The Schauder fixed point theorem proves that there exists a weak solution $\rho\in L^2([0,T];H^1(\TT^d))$ of the equation
\[\partial_t\rho = \Delta\rho-\nabla\cdot (s(\rho)g)\;\;\textrm{in}\;\;\TT^d\times(0,T)\;\;\textrm{with}\;\;\rho=\rho_0\;\;\textrm{on}\;\;\TT^d\times\{0\}.\]
A repetition of the regularization argument used in the proof of Proposition~\ref{prop_unique} proves using the definition of $s$ that, for the Dirac delta-distribution $\delta_0$,
\[ \partial_t\int_{\TT^d}(\rho-1)_+\dx =-\int_{\TT^d}\delta_0(\rho-1)\abs{\nabla\rho}^2+\int_{\TT^d}\delta_0(\rho-1)s(\rho)g\dx\leq 0, \]
which, since $\rho_0\leq 1$, proves that $\rho\leq 1$ almost surely.  Similarly,
\[ \partial_t\int_{\TT^d}(\rho)_-\dx =\int_{\TT^d}\delta_0(\rho)\abs{\nabla\rho}^2-\int_{\TT^d}\delta_0(\rho)s(\rho)g\dx\geq 0, \]
which, since $\rho_0\geq 0$, proves that $\rho\geq 0$ almost surely.  We conclude that $0\leq \rho\leq 1$ almost surely, and therefore using the definition of $s$ that $\rho$ is a weak solution of
\[\partial_t\rho = \Delta\rho-\nabla\cdot (\sqrt{\rho(1-\rho)}g)\;\;\textrm{in}\;\;\TT^d\times(0,T)\;\;\textrm{with}\;\;\rho=\rho_0\;\;\textrm{on}\;\;\TT^d\times\{0\},\]
in the sense of Definition~\ref{skel_def}.  The estimate is a consequence of \eqref{skel_8} and the weak-lower semi-continuity of the Sobolev norm.  The proof of $L^2(\TT^d)$-continuity follows analogously to the conclusion of Proposition~\ref{ou_wp}, which completes the proof.  \end{proof}

\begin{prop}\label{prop_weak_strong}  Let $T\in(0,\infty)$, let $\{\rho^n_0\}_{n\in\N},\rho_0\in L^2(\TT^d;[0,1])$, and let $\{g_n\}_{n\in\N},g\in L^2(\TT^d\times[0,T])^d$.  Assume that, as $n\rightarrow\infty$, $\rho^n_0\rightharpoonup \rho_0$ weakly in $L^2(\TT^d)$ and $g_n\rightharpoonup g$ weakly in $L^2(\TT^d\times[0,T])^d$.  Then, for the solutions $\{\rho_n\}_{n\in\N}, \rho$ of \eqref{skel_1} in the sense of Definition~\ref{skel_def} with controls $\{g_n\}_{n\in\N}$ and $g$ and initial data $\rho^n_0$ and $\rho_0$, as $n\rightarrow\infty$,
\[\rho_n\rightarrow \rho\;\;\textrm{strongly in}\;\;L^2(\TT^d\times[0,T]).\]
\end{prop}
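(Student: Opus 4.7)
The plan is to extract a subsequence of $\{\rho_n\}_{n\in\N}$ that converges strongly in $L^2([0,T];L^2(\TT^d))$ by compactness, identify its limit as a weak solution of \eqref{skel_1} with data $(\rho_0,g)$, and conclude by uniqueness (Proposition~\ref{prop_unique}) that the full sequence converges.

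First, I would apply Proposition~\ref{prop_exist}: since $\{\rho^n_0\}_{n\in\N}$ converges strongly in $L^2(\TT^d)$ and $\{g_n\}_{n\in\N}$ is weakly convergent and therefore bounded in $L^2(\TT^d\times[0,T];\R^d)$, the sequence $\{\rho_n\}_{n\in\N}$ is uniformly bounded in $L^\infty([0,T];L^2(\TT^d))\cap L^2([0,T];H^1(\TT^d))$, and $\{\partial_t \rho_n\}_{n\in\N}$ is uniformly bounded in $L^2([0,T];H^{-1}(\TT^d))$. By the compact embedding $H^1(\TT^d)\hookrightarrow L^2(\TT^d)$ and the Aubin-Lions-Simon lemma, I obtain a subsequence (not relabeled) and a limit $\tilde\rho \in L^2([0,T];H^1(\TT^d))$ with
\[\rho_n\rightarrow \tilde\rho\;\;\textrm{strongly in}\;\;L^2([0,T];L^2(\TT^d)),\quad \rho_n\rightharpoonup \tilde\rho\;\;\textrm{weakly in}\;\;L^2([0,T];H^1(\TT^d)).\]
Passing to a further subsequence, I may assume pointwise almost everywhere convergence as well, so the pointwise bound $0\leq \rho_n\leq 1$ from Definition~\ref{skel_def} transfers to $0\leq \tilde\rho\leq 1$ almost everywhere.

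Second, I would pass to the limit in the weak formulation. The term $\int_{\TT^d}\rho_n(x,t)\psi\dx$ converges to $\int_{\TT^d}\tilde\rho(x,t)\psi\dx$ for almost every $t$ along a subsequence by strong $L^2$-convergence; the initial data term converges by the assumed strong $L^2$-convergence of $\rho^n_0\rightarrow\rho_0$; and the Laplacian term converges by weak $L^2([0,T];H^1)$-convergence tested against the smooth function $\nabla\psi$. The delicate term is the nonlinear drift
\[\int_0^t\int_{\TT^d}\sqrt{\rho_n(1-\rho_n)}\,g_n\cdot\nabla\psi\dx\ds.\]
Since $x\mapsto\sqrt{x(1-x)}$ is continuous and uniformly bounded on $[0,1]$, dominated convergence together with the a.e.\ and strong $L^2$ convergence $\rho_n\rightarrow \tilde\rho$ yields $\sqrt{\rho_n(1-\rho_n)}\nabla\psi\rightarrow \sqrt{\tilde\rho(1-\tilde\rho)}\nabla\psi$ strongly in $L^2(\TT^d\times[0,T];\R^d)$. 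Pairing this strong convergence with the weak $L^2$-convergence $g_n\rightharpoonup g$ passes the nonlinear term to the limit. Thus $\tilde\rho$ is a weak solution of \eqref{skel_1} with initial data $\rho_0$ and control $g$.

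Third, by Proposition~\ref{prop_unique} the weak solution with data $(\rho_0,g)$ is unique, so $\tilde\rho=\rho$. Since this argument applies to every subsequence of $\{\rho_n\}_{n\in\N}$ and identifies the same limit $\rho$, a standard subsequence argument upgrades the convergence to the full sequence, yielding $\rho_n\rightarrow\rho$ strongly in $L^2([0,T];L^2(\TT^d))$.

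The principal technical point is the passage to the limit in the nonlinear drift against weakly converging controls; this is resolved cleanly because $\sqrt{x(1-x)}$ is a bounded continuous function of $x\in[0,1]$, so the Aubin-Lions-Simon compactness for $\{\rho_n\}$ in $L^2([0,T];L^2(\TT^d))$ directly upgrades to strong convergence of $\sqrt{\rho_n(1-\rho_n)}$. No subtle renormalization or entropy argument is required beyond what already appears in Propositions~\ref{prop_unique} and~\ref{prop_exist}.
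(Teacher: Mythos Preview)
Your proof is correct and follows essentially the same approach as the paper: the paper's proof is a one-sentence sketch invoking the a priori estimates of Proposition~\ref{prop_exist}, the Aubin--Lions--Simon lemma (using the compact embedding $H^1(\TT^d)\hookrightarrow L^2(\TT^d)$ and the continuous embedding $L^2(\TT^d)\hookrightarrow H^{-1}(\TT^d)$), and then passing to the limit in the weak formulation of Definition~\ref{skel_def}. You have simply written out these steps in detail, including the key observation that strong $L^2$-convergence of $\rho_n$ yields strong convergence of $\sqrt{\rho_n(1-\rho_n)}$, which pairs with the weak convergence of $g_n$ to handle the nonlinear drift.
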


\begin{proof}  The proof is an immediate consequence of Definition~\ref{skel_def}, Proposition~\ref{prop_exist}, the Aubin-Lions-Simon Lemma \cite{Aubin,pLions,Simon}, the compact embedding of $H^1(\TT^d)$ into $L^2(\TT^d)$, and the continuous embedding of $L^2(\TT^d)$ into $H^{-1}(\TT^d)$. \end{proof}

\subsection{The large deviations principle}\label{sec_ldp_old}  The LDP is based on the weak convergence approach to large deviations of \cite{BudDupMar2008}.  For this, it is essentially necessary to prove three conditions.  The first is to prove the existence of a measurable solution map taking initial data and noise living in the space $L^2(\TT^d;[0,1])\times\C([0,\infty);(\R^d)^\infty)$ to the solution space $L^2(\O;L^2(\TT^d;[0,1]))$.  The second of these is to prove the collapse of the controlled SPDE
\begin{equation}\label{new_control}\partial_t \rho = \Delta\rho - \sqrt{\ve}\nabla\cdot (\sqrt{\rho(1-\rho)}\circ\xi^K)-\nabla\cdot(\sqrt{\rho(1-\rho)}g),\end{equation}
for arbitrary controls $g\in L^2(\TT^d\times[0,T])^d$, to the skeleton equation
\begin{equation}\label{ldp_control_skel} \partial_t\rho = \Delta\rho-\nabla\cdot(\sqrt{\rho(1-\rho)}g).\end{equation}
The final condition is to prove the compactness of the level sets of the controlled skeleton equation, in the sense that it is necessary to prove that families of solutions of \eqref{ldp_control_skel} corresponding to $L^2$-bounded families of controls $g$ are relatively compact in $L^2(\TT^d\times[0,T])$.  These are established respectively in Proposition~\ref{prop_entropy}, Proposition~\ref{prop_entropy_1}, and Proposition~\ref{ldp_skel_compact}.  The proof of the large deviation principle in Theorem~\ref{thm_new} is then a consequence of \cite[Theorem~6]{BudDupMar2008}.

\begin{prop}\label{prop_entropy}  Let $T\in(0,\infty)$, let $\{\xi^K\}_{K\in\N}$ and $\rho_0$ satisfy Assumption~\ref{def_noise_trig}, and for every $\ve\in(0,1)$ and $K\in\N$ let $\rho^{\ve,K}(\rho_0)\in L^2(\Omega\times[0,T];L^2(\TT^d;[0,1]))$ be the unique solution of \eqref{new_1} with initial condition $\rho_0$ in the sense of Definition~\ref{def_sol}.  Then there exists a measurable map
\[S_{\ve,K}\colon L^2(\TT^d;[0,1])\times \textrm{C}([0,T];(\R^d)^\infty)\rightarrow L^2(\TT^d\times[0,T];[0,1])\]
such that, for every $\rho_0\in L^2(\TT^d;[0,1])$, $\P$-a.s.,
\[S_{\ve,K}(\rho_0,(B^k,W^k)_{k\in\Z^d})=\rho^{\ve,K}(\rho_0),\]
for the Brownian motions $(B^k,W^k)_{k\in\Z^d}$ defined in Assumption~\ref{def_noise_trig}.
\end{prop}

\begin{proof}  Let $\overline{B} = (B^k,W^k)_{k\in\Z^d}\in \C([0,\infty);(\R^d)^\infty)$.  Theorems~\ref{thm_rks_unique} and \ref{thm_rks_exist} prove that for every $\rho_0\in L^2(\TT^d)$ here exists a measurable function $S^{\ve,K}_{\rho_0}\colon\textrm{C}([0,\infty);(\R^d)^\infty)\rightarrow L^2(\TT^d\times[0,T])$ such that, $\P$-a.s.,
\begin{equation}\label{tec_0}S^{\ve,K}_{\rho_0}(\overline{B})=\rho^{\ve,K}(\rho_0).\end{equation}
We will now use the separability of $L^2(\TT^d)$ and the pathwise contraction property of Theorem~\ref{thm_rks_unique}.  For a countable dense subset $\{\rho_n\}_{n\in\N}$ of $L^2(\TT^d)$.  The pathwise $L^1$-contraction of Theorem~\ref{thm_rks_unique} and the countability of the set $\N\times\N$ prove that, on a measurable subset of full probability,
\begin{equation}\label{tec_1}\sup_{t\in[0,T]}\norm{S^{\ve,K}_{\rho_n}(\overline{B})-S^{\ve,K}_{\rho_m}(\overline{B})}_{L^1(\TT^d)}\leq \norm{\rho_n-\rho_m}_{L^1(\TT^d)}\;\;\textrm{for every}\;\;n,m\in\N.\end{equation}
Since the $L^\infty$-estimate proves that $\norm{\cdot}_{L^2(\TT^d\times[0,T])}\leq \norm{\cdot}^\frac{1}{2}_{L^1(\TT^d\times[0,T])}$, it follows from the density of the $\{\rho_n\}_{n\in\N}$ with respect to the strong $L^2(\TT^d)$-norm and \eqref{tec_1} that there almost surely exists a strongly continuous $\overline{S}^{\ve,K}_{\overline{B}}\colon L^2(\TT^d)\rightarrow L^2(\TT^d\times[0,T])$ such that, for every $n\in\N$, on a subset of full probability independent of $n$,
\begin{equation}\label{tec_02} \overline{S}^{\ve,K}_{\overline{B}}(\rho_n)=S^{\ve,K}_{\rho_n}(\overline{B}).\end{equation}
It then follows from \eqref{tec_1}, \eqref{tec_02}, and a simplified version of \cite[Theorem~5.29, Corollary~5.31]{FG21} that, for every $\rho\in L^2(\TT^d)$, on a subset of full probability depending on $\rho$,
\begin{equation}\label{tec_2} \overline{S}^{\ve,K}_{\overline{B}}(\rho)=S^{\ve,K}_{\rho}(\overline{B}).\end{equation}
We define $S_{\ve,K}\colon L^2(\TT^d)\times \textrm{C}([0,T];(\R^d)^\infty)\rightarrow L^2(\TT^d\times[0,T])$ by the rule
\[S_{\ve,K}(\rho,\overline{B})=\overline{S}^{\ve,K}_{\overline{B}}(\rho),\]
from which it follows from \eqref{tec_0} that for every $\rho\in L^2(\TT^d;[0,1])$ the restriction
\begin{equation}\label{tec_3} S_{\ve,K}(\rho,\cdot)\colon\textrm{C}([0,\infty);(\R^d)^\infty)\rightarrow L^2(\TT^d\times[0,T])\;\;\textrm{is measurable,} \end{equation}
and from \eqref{tec_2} that for almost every realization of $\overline{B}$ the restriction
\begin{equation}\label{tec_4} S_{\ve,K}(\cdot,\overline{B})\colon L^2(\TT^d)\rightarrow L^2(\TT^d\times[0,T])\;\;\textrm{is strongly continuous.}\end{equation}
In combination \eqref{tec_3}, \eqref{tec_4}, and the separability of $L^2(\TT^d)$ prove that $S_{\ve,K}$ is measurable, which with \eqref{tec_0} and \eqref{tec_2} completes the proof.  \end{proof}

\begin{prop}\label{prop_entropy_1}  Let $T\in(0,\infty)$, let $\{\rho_0\}_{\ve\in(0,1)},\rho_0\in L^2(\TT^d;[0,1])$ and let $\{g^\ve\}_{\ve\in(0,1)},g$ be $\F_t$-predictable, $\R^d$-valued processes such that, as $\ve\rightarrow 0$,
\[\rho^\ve_0\rightharpoonup \rho_0\;\;\textrm{weakly in}\;\;L^2(\TT^d)\;\;\textrm{and}\;\;g^\ve\rightharpoonup g\;\;\textrm{in law,}\]
and let $\{K(\ve)\}_{\ve\in(0,1)}$ be a sequence that satisfies, as $\ve\rightarrow 0$,
\[\ve K(\ve)^{d+2}\rightarrow 0\;\;\textrm{and}\;\;K(\ve)\rightarrow \infty.\]
Then, the solutions $\rho^\ve$ of \eqref{new_control} in the sense of Definition~\ref{def_sol} with parameters $(\ve,K(\ve))$, initial data $\rho^\ve_0$, and control $g^\ve$ satisfy, as $\ve\rightarrow 0$,
\[\rho^\ve\rightharpoonup \rho\;\;\textrm{in law on}\;\;L^2(\TT^d\times[0,T]),\]
for $\rho$ the unique solution of \eqref{skel_1} in the sense of Definition~\ref{skel_def} with initial data $\rho_0$ and control $g$. \end{prop}
 
\begin{proof}  The convergence in law of the $\{g^\ve\}_{\ve\in(0,1)}$ to $g$, the boundedness of the initial data, and the estimates of Proposition~\ref{prop_ap} prove using a repetition of Proposition~\ref{prop_eta_tight} that the laws of the solutions $\{\rho^\ve\}_{\ve\in(0,1)}$ are tight on $L^2(\TT^d\times[0,T])$ and that the laws of the gradients $\{\nabla\rho^\ve\}_{\ve\in(0,1)}$ are tight on $L^2(\TT^d\times[0,T])^d$.  A simplified version of \cite[Proposition~5.27, Theorem~5.29]{FG21} based on Prokhorov's theorem and the Skorokhod representation theorem then reduce Proposition~\ref{prop_entropy_1} to the following simpler question:  suppose that, $\P$-a.s.\ along a subsequence $\ve\rightarrow 0$,
\[\rho^\ve_0\rightharpoonup\rho^\ve\;\;\textrm{weakly in}\;\;L^2(\TT^d)\;\;\textrm{and}\;\;g^\ve\rightharpoonup g\;\;\textrm{weakly in}\;\;L^2(\TT^d\times[0,T])^d,\]
and that there exists $\rho\in L^2(\O\times[0,T];H^1(\TT^d))$ such that
\[\rho^\ve\rightarrow \rho\;\;\textrm{strongly in}\;\;L^2(\TT^d\times[0,T])\;\;\textrm{and weakly in}\;\;L^2([0,T];H^1(\TT^d)),\]
and that there $\P$-a.s.\ exists a finite, nonnegative measure $q$ on $\TT^d\times[0,T]$ such that, for the kinetic measures $\{q^\ve\}_{\ve\in(0,1)}$ of the $\{\rho^\ve\}_{\ve\in(0,1)}$,
\begin{equation}\label{pe1_00}(\xi(1-\xi))^{-1}q^\ve\rightharpoonup q\;\;\textrm{weakly in}\;\;\mathcal{M}_+(\TT^d\times[0,T]),\end{equation}
in the space $\mathcal{M}_+(\TT^d\times[0,T])$ of nonnegative Radon measures.  Then, we have that $\rho$ the unique solution of \eqref{skel_1} in the sense of Definition~\ref{skel_def} with initial data $\rho_0$ and control $g$.

The essential difficulty in proving this statement is in the necessity of considering test functions that are compactly supported in the velocity variable.  In order to remove this limit and recover the classical formulation of the skeleton equation, it is necessary to use the optimal regularity estimate for the kinetic measures implicit in \eqref{pe1_00}.  To make this precise, for every $\beta\in(0,\nicefrac{1}{2})$ let $\zeta_\beta\in\C^\infty_c((0,1);[0,1])$ satisfy that $\zeta_\beta(\xi) = 0$ if $\xi\leq \beta$ or if $\xi\geq 1-\beta$ and $\zeta_\beta(\xi) = 1$ if $\xi\in[2\beta, 1-2\beta]$, and that $|\zeta_\beta'(\xi)|\leq\nicefrac{c}{\beta(1-\beta)}$ for some $c\in(0,\infty)$ independent of $\beta$.  Then, for $\psi\in \C^\infty_c(\TT^d)$ and $\ve,\beta\in(0,1)$, we have $\P$-a.s.\ that
\begin{align}\label{pe1_0}
& \int_{\TT^d}\int_\R \chi^\ve(x,\xi,s)\psi(x)\zeta_\beta(\xi) = \int_{\TT^d}\int_\R \overline{\chi}(\rho^\ve_0)\psi\zeta_\beta-\int_0^t\int_{\TT^d} \nabla\rho^\ve \cdot \nabla\psi(x)\zeta_\beta(\rho^\ve)
\\ \nonumber  & -\sqrt{\ve}\int_0^t\int_{\TT^d} \nabla\cdot\Big(\sqrt{\rho^\ve(1-\rho^\ve)}\dd\xi^F\Big)\psi(x)\zeta_\beta(\rho^\ve)-\frac{\ve N_{K(\ve)}}{8}\int_0^t\int_{\TT^d} \frac{(1-2\rho^\ve)^2}{\rho^\ve(1-\rho^\ve)}\nabla\rho^\ve\cdot\nabla\psi(x)\zeta_\beta(\rho^\ve)
\\ \nonumber  & +\int_0^t\int_{\TT^d}\sqrt{\rho^\ve(1-\rho^\ve)}g^\ve\cdot \nabla\psi(x)\zeta_\beta(\rho^\ve)+\int_0^t\int_{\TT^d}\sqrt{\rho^\ve(1-\rho^\ve)}g^\ve\cdot \nabla\rho^\ve(x)\psi(x)\zeta'_\beta(\rho^\ve)
\\ \nonumber  & -\int_0^t\int_{\TT^d}\int_\R \psi(x)\zeta_\beta'(\xi) \dd q^\ve+\frac{\ve M_{K(\ve)}}{2}\int_0^t\int_{\TT^d}\psi(x)\zeta_\beta(\rho^\ve)\rho^\ve(1-\rho^\ve).
\end{align}
We pass first to the limit $\ve\rightarrow 0$, from which it follows from the compact support of $\zeta_\beta$ on $(0,1)$, $N_{K(\ve)}\leq cK(\ve)^d$, $M_{K(\ve)}\leq cK(\ve)^{d+2}$, $\lim\nolimits_{\ve\rightarrow 0}\ve K(\ve)^{d+2}=0$, and the estimates of Proposition~\ref{prop_ap} that, for every $t\in[0,T]$,
\begin{equation}\label{pe1_1}\limsup_{\ve\rightarrow 0}\abs{\frac{\ve N_{K(\ve)}}{8}\int_0^t\int_{\TT^d} \frac{(1-2\rho^\ve)^2}{\rho^\ve(1-\rho^\ve)}\nabla\rho^\ve\cdot\nabla\psi(x)\zeta_\beta(\rho^\ve)}+\abs{\frac{\ve M_{K(\ve)}}{2}\int_0^t\int_{\TT^d}\psi(x)\zeta_\beta(\rho^\ve)\rho^\ve(1-\rho^\ve)}=0.\end{equation}
For the martingale term, the Burkh\"older--Davis--Gundy inequality proves that, for some $c\in(0,\infty)$ independent of $\ve$ but depending on $\psi$ and $\beta$,
\begin{align*}
& \E\Big[\sup_{t\in[0,T]}\abs{\sqrt{\ve}\int_0^t\int_{\TT^d} \nabla\cdot\Big(\sqrt{\rho^\ve(1-\rho^\ve)}\dd\xi^F\Big)\psi(x)\zeta_\beta(\rho^\ve)}\Big]
\\ & \leq c\E\Big[\Big(\ve N_{K(\ve)}\int_0^T\int_{\TT^d}\abs{\nabla\rho^\ve}^2+\ve M_{K(\ve)} \Big)\Big]\leq c \sqrt{\ve M_{K(\ve)}}.
\end{align*}
We therefore have that, after passing to a further subsequence still denoted $\ve\rightarrow 0$, $\P$-a.s.,
\begin{equation}\label{pe1_2}\limsup_{\ve\rightarrow 0}\sup_{t\in[0,T]}\abs{\sqrt{\ve}\int_0^t\int_{\TT^d} \nabla\cdot\Big(\sqrt{\rho^\ve(1-\rho^\ve)}\dd\xi^F\Big)\psi(x)\zeta_\beta(\rho^\ve)}=0.\end{equation}
We then observe using the identity, for every $t\in[0,T]$ and $\ve\in(0,1)$,
\[\int_{\TT^d}\overline{\chi}(x,\xi,t)\zeta_\beta(\xi)\psi(x)= \int_{\TT^d}\rho^\ve(x,t)\psi(x)+\int_{\TT^d}\int_{\TT^d}\int_\R\chi^\ve(x,\xi,t)\psi(x)(\zeta_\beta(\xi)-1),\]
that, for $c\in(0,\infty)$ independent of $\ve$ and $t\in[0,T]$,
\begin{equation}\label{pe1_20}\abs{\int_{\TT^d}\overline{\chi}(x,\xi,t)\zeta_\beta(\xi)\psi(x)-\int_{\TT^d}\rho^\ve(x,t)\psi(x)}\leq c\beta.\end{equation}
Returning to \eqref{pe1_0}, it follows from \eqref{pe1_1}, \eqref{pe1_2}, \eqref{pe1_20}, the weak convergence of $g^\ve$ to $g$, the weak convergence of $\rho^\ve_0$ to $\rho_0$, the strong convergence of $\rho^\ve$ to $\rho$, and the weak convergence of $\nabla\rho^\ve$ to $\nabla\rho$ that, for the kinetic function $\chi$ of $\rho$, $\P$-a.s.\ for almost every $t\in[0,T]$, for some $c\in(0,\infty)$ independent of $\ve,\beta\in(0,1)$,
\begin{align}\label{pe1_3}
& \abs{\int_{\TT^d}\rho(x,t)\psi(x)-\int_{\TT^d}\rho_0(x)\psi(x)-\int_0^t\int_{\TT^d}\nabla\rho\cdot\nabla\psi\zeta_\beta(\rho)-\int_0^t\int_{\TT^d}\sqrt{\rho(1-\rho)}g\cdot\nabla\psi(x)\zeta_\beta(\rho)}
\\ \nonumber & \leq c\b+\limsup_{\ve\rightarrow 0}\Big(\int_0^T\int_{\TT^d}\int_\R\psi(x)\zeta'_\beta(\xi)\dd q^\ve +\int_0^T\int_{\TT^d}\sqrt{\rho^\ve(1-\rho^\ve)}g^\ve\cdot\nabla\rho^\ve(x)\psi(x)\zeta'_\beta(\rho^\ve) \Big).
\end{align}
To treat the righthand side of \eqref{pe1_3}, we first observe that
\begin{align}\label{pe1_4}
\limsup_{\ve\rightarrow 0}\int_0^T\int_{\TT^d}\psi(x)\zeta'_\beta(\xi)\dd q^\ve & \leq \limsup_{\ve\rightarrow 0}\norm{\psi}_{L^\infty(\TT^d)}\int_0^T\int_{\TT^d}\int_\R (\xi(1-\xi))\zeta'_\beta(\xi)\frac{\dd q^\ve}{\xi(1-\xi)}
\\ \nonumber & = \norm{\psi}_{L^\infty(\TT^d)}\int_0^T\int_{\TT^d}\int_\R(\xi(1-\xi))\zeta'_\beta(\xi)\dd q.
\end{align}
We then observe using the bound on $\zeta'_\beta$, the finiteness of $q$, and the dominated convergence theorem that
\begin{equation}\label{pe1_5}\lim_{\beta\rightarrow 0}\abs{\int_0^T\int_{\TT^d}\int_\R(\xi(1-\xi))\zeta'_\beta(\xi)\dd q} = 0.\end{equation}
For the second term on the righthand side of \eqref{pe1_3}, it follows from H\"older's inequality, the uniform $L^2$-boundedness of the $g^\ve$, and the regularity of the kinetic measure in Definition~\ref{def_sol} that, for some $c\in(0,\infty)$,
\begin{align}\label{pe1_6}
& \limsup_{\ve\rightarrow 0}\abs{\int_0^T\int_{\TT^d}\sqrt{\rho^\ve(1-\rho^\ve)}g^\ve\cdot\nabla\rho^\ve(x)\psi(x)\zeta'_\beta(\rho^\ve)}
\\ \nonumber & \leq \limsup_{\ve\rightarrow 0}\norm{\psi}_{L^\infty(\TT^d)}\norm{g^\ve}_{L^2(\TT^d\times[0,T])^d}\Big(\int_0^T\int_{\TT^d}(\rho^\ve(1-\rho^\ve))\abs{\zeta'_\beta(\rho^\ve)}^2\abs{\nabla\rho^\ve}^2\Big)^\frac{1}{2}
\\ \nonumber & \leq \limsup_{\ve\rightarrow 0}\norm{\psi}_{L^\infty(\TT^d)}\norm{g^\ve}_{L^2(\TT^d\times[0,T])^d}\Big(\int_0^T\int_{\TT^d}\int_\R(\xi(1-\xi))^2\abs{\zeta'_\beta(\xi)}^2\frac{\dd q^\ve}{\xi(1-\xi)}\Big)^\frac{1}{2}
\\ \nonumber & \leq c\norm{\psi}_{L^\infty(\TT^d)}\Big(\int_0^T\int_{\TT^d}\int_\R(\xi(1-\xi))^2\abs{\zeta'_\beta(\xi)}^2\dd q\Big)^\frac{1}{2}.
\end{align}
The bounds of $\zeta'_\beta$, the finiteness of $q$, and the dominated convergence theorem then prove that
\begin{equation}\label{pe1_7} \lim_{\beta\rightarrow 0}\Big(\int_0^T\int_{\TT^d}\int_\R(\xi(1-\xi))^2\abs{\zeta'_\beta(\xi)}^2\dd q\Big)^\frac{1}{2}=0.\end{equation}
Returning to \eqref{pe1_3}, after passing to the limit $\beta\rightarrow 0$ using \eqref{pe1_4}, \eqref{pe1_5}, \eqref{pe1_6}, \eqref{pe1_7}, and the equality, for every $t\in[0,T]$,
\[\lim_{\beta\rightarrow 0}\int_{\TT^d}\int_\R\chi(x,\xi,s)\zeta_\beta(\rho)\psi(x)\dx\dxi = \int_{\TT^d}\rho(x,s)\psi(x)\dx,\]
we have $\P$-a.s.\ that, for every $t\in[0,T]$,
\[\int_{\TT^d}\rho(x,t)\psi(x) = \int_{\TT^d}\rho_0(x)\psi(x)+\int_0^t\int_{\TT^d}\sqrt{\rho(1-\rho)}g\cdot\nabla\psi(x).\]
The claim then follows from the separability of $\C^\infty_c(\TT^d)$ in the strong $H^1(\TT^d)$-metric, which completes the proof. \end{proof}

\begin{prop}\label{ldp_skel_compact}  Let $T,M\in(0,\infty)$ and let $\mathcal{S}_M\subseteq L^2(\TT^d\times[0,T])$ be defined by
\[\mathcal{S}_M=\{\rho\colon \exists\; g\in B_M(L^2(\TT^d\times[0,T])^d), \rho_0\in L^2(\TT^d)\;\textrm{such that $\rho$ solves \eqref{skel_1}}\},\]
for $B_M(L^2(\TT^d\times[0,T])^d)$ the ball of radius $M$ in $L^2(\TT^d\times[0,T])^d$.  Then $\mathcal{S}_M$ is compact in the strong topology of $L^2(\TT^d\times[0,T])$.

\begin{proof}  The proof is a consequence of the estimates of Proposition~\ref{prop_exist}, the uniform $L^2$-boundedness of the controls, and the Aubin--Lions--Simons lemma \cite{Aubin,pLions,Simon}. \end{proof}

\end{prop}

\begin{thm}\label{thm_new} Let $T\in(0,\infty)$ and assume that $\{K(\ve)\}_{\ve\in(0,1)}$ satisfy, as $\ve\rightarrow 0$,
\[\ve K(\ve)^{d+2}\rightarrow 0,\;\;\;\textrm{and}\;\;K(\ve)\rightarrow \infty.\]
Then the rate functions $\{I_{\rho_0}\}_{\rho_0\in L^2(\TT^d)}$  defined in \eqref{ldp_1} are good rate functions with compact level sets on compact sets.  For every $\rho_0\in L^2(\TT^d;[0,1])$ the solutions $\{\rho^\ve(\rho_0)\}_{\ve\in(0,1)}$ of \eqref{new_1} satisfy a large deviations principle with rate function $I_{\rho_0}$ on $L^2(\TT^d\times[0,T])$.  Furthermore, the solutions satisfy a uniform large deviations principle with respect to weakly $L^2(\TT^d;[0,1])$-compact subsets of $L^2(\TT^d;[0,1])$ on $L^2(\TT^d\times[0,T])$.
\end{thm}

\begin{proof}  The statement is now a consequence of \cite[Theorem~6]{BudDupMar2008}, Proposition~\ref{prop_entropy}, Proposition~\ref{prop_entropy_1}, Proposition~\ref{ldp_skel_compact}, and the equivalence of uniform Laplace and large deviations principles with respect to weakly compact subsets of the initial data \cite[Theorem~4.3]{BudDupSal}.  \end{proof}

\section*{Acknowledgements}  The first author acknowledges financial support from the EPSRC through the EPSRC Early Career Fellowship EP/V027824/1.  The third author acknowledges financial support by the DFG through the CRC 1283 ``Taming uncertainty and profiting from randomness and low regularity in analysis, stochastics and their applications.''

\bibliography{SSEPLDP}

\begin{thebibliography}{10}

\bibitem{Arr1983}
R.~Arratia.
\newblock The motion of a tagged particle in the simple symmetric exclusion
  system on {${\bf Z}$}.
\newblock {\em Ann. Probab.}, 11(2):362--373, 1983.

\bibitem{Aubin}
J.-P. Aubin.
\newblock Un th\'eor\`eme de compacit\'e.
\newblock {\em C. R. Acad. Sci. Paris}, 256:5042--5044, 1963.

\bibitem{BDSGJLL15}
L.~Bertini, A.~De~Sole, D.~Gabrielli, G.~Jona-Lasinio, and C.~Landim.
\newblock Macroscopic fluctuation theory.
\newblock {\em Reviews of Modern Physics}, 87(2):593, 2015.

\bibitem{BGN16}
F.~Bouchet, K.~Gaw{\c e}dzki, and C.~Nardini.
\newblock Perturbative {{Calculation}} of {{Quasi}}-{{Potential}} in
  {{Non}}-equilibrium {{Diffusions}}: {{A Mean}}-{{Field Example}}.
\newblock {\em Journal of Statistical Physics}, 163(5):1157--1210, June 2016.

\bibitem{BGJ17}
Z.~Brze\'{z}niak, B.~Goldys, and T.~Jegaraj.
\newblock Large deviations and transitions between equilibria for stochastic
  {L}andau-{L}ifshitz-{G}ilbert equation.
\newblock {\em Arch. Ration. Mech. Anal.}, 226(2):497--558, 2017.

\bibitem{BudDup2019}
A.~Budhiraja and P.~Dupuis.
\newblock {\em Analysis and approximation of rare events}, volume~94 of {\em
  Probability Theory and Stochastic Modelling}.
\newblock Springer, New York, 2019.

\bibitem{BudDupMar2008}
A.~Budhiraja, P.~Dupuis, and V.~Maroulas.
\newblock Large deviations for infinite dimensional stochastic dynamical
  systems.
\newblock {\em Ann. Probab.}, 36(4):1390--1420, 2008.

\bibitem{CalKipSpo}
A.~Calves, C.~Kipnis, and H.~Spohn.
\newblock Unpublished manuscript.

\bibitem{CD19}
S.~Cerrai and A.~Debussche.
\newblock Large deviations for the dynamic {$\Phi^{2n}_d$} model.
\newblock {\em Appl. Math. Optim.}, 80(1):81--102, 2019.

\bibitem{CF11}
S.~Cerrai and M.~Freidlin.
\newblock Approximation of quasi-potentials and exit problems for
  multidimensional {{RDE}}'s with noise.
\newblock {\em Transactions of the American Mathematical Society},
  363(7):3853--3892, 2011.

\bibitem{CheKhoNuaPu2019}
L.~Chen, D.~Khoshnevisan, D.~Nualart, and F.~Pu.
\newblock Poincar\'e inequality, and central limit theorems for parabolic
  stochastic partial differential equations.
\newblock {\em arXiv:1912.01482}, 2019.

\bibitem{CF23}
F.~Cornalba and J.~Fischer.
\newblock The {D}ean-{K}awasaki equation and the structure of density
  fluctuations in systems of diffusing particles.
\newblock {\em Arch. Ration. Mech. Anal.}, 247(5):Paper No. 76, 59, 2023.

\bibitem{CFIR23}
F.~Cornalba, J.~Fischer, J.~Ingmanns, and C.~Raithel.
\newblock Density fluctuations in weakly interacting particle systems via the
  dean-kawasaki equation.
\newblock {\em arXiv preprint arXiv:2303.00429}, 2023.

\bibitem{DG18}
K.~Dareiotis and B~Gess.
\newblock Nonlinear diffusion equations with nonlinear gradient noise.
\newblock {\em Electron. J. Probab.}, 25:Paper No. 35, 43, 2020.

\bibitem{D07}
B.~Derrida.
\newblock Non-equilibrium steady states: Fluctuations and large deviations of
  the density and of the current.
\newblock {\em Journal of Statistical Mechanics: Theory and Experiment},
  2007(07):P07023--P07023, July 2007.

\bibitem{DKP22}
A.~Djurdjevac, H.~Kremp, and N.~Perkowski.
\newblock Weak error analysis for a nonlinear spde approximation of the
  dean-kawasaki equation, 2022.

\bibitem{DWZZ20}
Z.~Dong, J.-L. Wu, R.~Zhang, and T.~Zhang.
\newblock Large deviation principles for first-order scalar conservation laws
  with stochastic forcing.
\newblock {\em The Annals of Applied Probability}, 30(1):324--367, February
  2020.

\bibitem{DupEll1997}
P.~Dupuis and R.. Ellis.
\newblock {\em A weak convergence approach to the theory of large deviations}.
\newblock Wiley Series in Probability and Statistics: Probability and
  Statistics. John Wiley \& Sons, Inc., New York, 1997.

\bibitem{Eva2010}
L.~C. Evans.
\newblock {\em Partial differential equations}, volume~19 of {\em Graduate
  Studies in Mathematics}.
\newblock American Mathematical Society, Providence, RI, second edition, 2010.

\bibitem{FJL82}
W.~G. Faris and G~{Jona-Lasinio}.
\newblock Large fluctuations for a nonlinear heat equation with noise.
\newblock {\em Journal of Physics A: Mathematical and General},
  15(10):3025--3055, October 1982.

\bibitem{FG17}
B.~Fehrman and B.~Gess.
\newblock Well-posedness of nonlinear diffusion equations with nonlinear,
  conservative noise.
\newblock {\em Arch. Ration. Mech. Anal.}, 233(1):249--322, 2019.

\bibitem{FG21}
B.~Fehrman and B.~Gess.
\newblock Well-posedness of the {D}ean--{K}awasaki and the nonlinear
  {D}awson--{W}atanabe equation with correlated noise.
\newblock {\em arXiv:2108.08858}, 2021.

\bibitem{FehGes19}
B.~Fehrman and B.~Gess.
\newblock Non-equilibrium large deviations and parabolic-hyperbolic {PDE} with
  irregular drift.
\newblock {\em Invent. Math.}, 234(2):573--636, 2023.

\bibitem{FerPresScacVar1991}
P.~A. Ferrari, E.~Presutti, E.~Scacciatelli, and M.~E. Vares.
\newblock The symmetric simple exclusion process. {I}. {P}robability estimates.
\newblock {\em Stochastic Process. Appl.}, 39(1):89--105, 1991.

\bibitem{FerPresScaVar19912}
P.~A. Ferrari, E.~Presutti, E.~Scacciatelli, and M.~E. Vares.
\newblock The symmetric simple exclusion process. {II}. {A}pplications.
\newblock {\em Stochastic Process. Appl.}, 39(1):107--115, 1991.

\bibitem{FG16}
P.~K. Friz and B.~Gess.
\newblock Stochastic scalar conservation laws driven by rough paths.
\newblock {\em Ann. Inst. H. Poincar{\'e} Anal. Non Lin{\'e}aire},
  33(4):933--963, 2016.

\bibitem{GS14}
B.~Gess and P.~E. Souganidis.
\newblock Scalar conservation laws with multiple rough fluxes.
\newblock {\em Commun. Math. Sci.}, 13(6):1569--1597, 2015.

\bibitem{GS17}
B.~Gess and P.~E. Souganidis.
\newblock Long-time behavior, invariant measures, and regularizing effects for
  stochastic scalar conservation laws.
\newblock {\em Comm. Pure Appl. Math.}, 70(8):1562--1597, 2017.

\bibitem{GS16-2}
B.~Gess and P.~E. Souganidis.
\newblock Stochastic non-isotropic degenerate parabolic--hyperbolic equations.
\newblock {\em Stochastic Process. Appl.}, 127(9):2961--3004, 2017.

\bibitem{ZhaZhoGuo2019}
B.~Guo, R.~Zhang, and G.~Zhou.
\newblock Stochastic 2{D} primitive equations: central limit theorem and
  moderate deviation principle.
\newblock {\em Comput. Math. Appl.}, 77(4):928--946, 2019.

\bibitem{HaiWeb2015}
M.~Hairer and H.~Weber.
\newblock Large deviations for white-noise driven, nonlinear stochastic
  {{PDEs}} in two and three dimensions.
\newblock {\em Annales de la Facult{\'e} des Sciences de Toulouse.
  Math{\'e}matiques. S{\'e}rie 6}, 24(1):55--92, 2015.

\bibitem{HH77}
P.~C. Hohenberg and B.~I. Halperin.
\newblock Theory of dynamic critical phenomena.
\newblock {\em Reviews of Modern Physics}, 49(3):435--479, July 1977.

\bibitem{HuLiWan2020}
S.~Hu, R.~Li, and X.~Wang.
\newblock Central {L}imit {T}heorem and {M}oderate {D}eviations for a {C}lass
  of {S}emilinear {S}tochastic {P}artial {D}ifferential {E}quations.
\newblock {\em Acta Math. Sci. Ser. B}, 40(5):1477--1494, 2020.

\bibitem{HuaNuaVii2020}
J.~Huang, D.~Nualart, and L.~Viitasaari.
\newblock A central limit theorem for the stochastic heat equation.
\newblock {\em Stochastic Process. Appl.}, 130(12):7170--7184, 2020.

\bibitem{HuaNuaViiZhe2020}
J.~Huang, D.~Nualart, L.~Viitasaari, and G.~Zheng.
\newblock Gaussian fluctuations for the stochastic heat equation with colored
  noise.
\newblock {\em Stoch. Partial Differ. Equ. Anal. Comput.}, 8(2):402--421, 2020.

\bibitem{JarLan2006}
M.~D. Jara and C.~Landim.
\newblock Nonequilibrium central limit theorem for a tagged particle in
  symmetric simple exclusion.
\newblock {\em Ann. Inst. H. Poincar\'{e} Probab. Statist.}, 42(5):567--577,
  2006.

\bibitem{JLM90}
G.~{Jona-Lasinio} and P.~K. Mitter.
\newblock Large deviation estimates in the stochastic quantization of
  \$\textbackslash{}phi\^4\_2\$.
\newblock {\em Communications in Mathematical Physics}, 130(1):111--121, 1990.

\bibitem{KipLan1999}
C.~Kipnis and C.~Landim.
\newblock {\em Scaling limits of interacting particle systems}, volume 320 of
  {\em Grundlehren der Mathematischen Wissenschaften}.
\newblock Springer-Verlag, Berlin, 1999.

\bibitem{KL99}
C.~Kipnis and C.~Landim.
\newblock {\em Scaling Limits of Interacting Particle Systems}.
\newblock {Springer}, {New York}, 1999.

\bibitem{KipVar1986}
C.~Kipnis and S.~R.~S. Varadhan.
\newblock Central limit theorem for additive functionals of reversible {M}arkov
  processes and applications to simple exclusions.
\newblock {\em Comm. Math. Phys.}, 104(1):1--19, 1986.

\bibitem{Kry2013}
N.~V. Krylov.
\newblock A relatively short proof of {I}t\^{o}'s formula for {SPDE}s and its
  applications.
\newblock {\em Stoch. Partial Differ. Equ. Anal. Comput.}, 1(1):152--174, 2013.

\bibitem{LL87}
L.~D. Landau and E.~M. Lifshitz.
\newblock {\em Theoretical {{Physics}}, Vol. 6, {{Fluid Mechanics}}}.
\newblock {Pergamon, London}, 1987.

\bibitem{LDEZR2019}
X.~Li, N.~Dirr, P.~Embacher, J.~Zimmer, and C.~Reina.
\newblock Harnessing fluctuations to discover dissipative evolution equations.
\newblock {\em J. Mech. Phys. Solids}, 131:240--251, 2019.

\bibitem{pLions}
J.-L. Lions.
\newblock {\em Quelques m\'ethodes de r\'esolution des probl\`emes aux limites
  non lin\'eaires}.
\newblock Dunod; Gauthier-Villars, Paris, 1969.

\bibitem{LPS13-2}
P.-L. Lions, B.~Perthame, and P.~Souganidis.
\newblock Stochastic averaging lemmas for kinetic equations.
\newblock {\em S{\'e}minaire Laurent Schwartz \textemdash{} EDP et
  applications}, pages 1--17, 2011.

\bibitem{LPS13}
P.-L. Lions, B.~Perthame, and P.~E. Souganidis.
\newblock Scalar conservation laws with rough (stochastic) fluxes.
\newblock {\em Stoch. Partial Differ. Equ. Anal. Comput.}, 1(4):664--686, 2013.

\bibitem{LPS14}
P.-L. Lions, B.~Perthame, and P.~E. Souganidis.
\newblock Scalar conservation laws with rough (stochastic) fluxes: The
  spatially dependent case.
\newblock {\em Stoch. Partial Differ. Equ. Anal. Comput.}, 2(4):517--538, 2014.

\bibitem{QuaRezVar1999}
J.~Quastel, F.~Rezakhanlou, and S.~R.~S. Varadhan.
\newblock Large deviations for the symmetric simple exclusion process in
  dimensions {$d\geq 3$}.
\newblock {\em Probab. Theory Related Fields}, 113(1):1--84, 1999.

\bibitem{Rav1992}
K.~Ravishankar.
\newblock Fluctuations from the hydrodynamical limit for the symmetric simple
  exclusion in {${\bf Z}^d$}.
\newblock {\em Stochastic Process. Appl.}, 42(1):31--37, 1992.

\bibitem{RevYor1999}
D.~Revuz and M.~Yor.
\newblock {\em Continuous martingales and {B}rownian motion}, volume 293 of
  {\em Grundlehren der Mathematischen Wissenschaften}.
\newblock Springer-Verlag, Berlin, third edition, 1999.

\bibitem{Rez1994}
F.~Rezakhanlou.
\newblock Propagation of chaos for symmetric simple exclusions.
\newblock {\em Comm. Pure Appl. Math.}, 47(7):943--957, 1994.

\bibitem{BudDupSal}
M.~Salins, A.~Budhiraja, and P.~Dupuis.
\newblock Uniform large deviation principles for {B}anach space valued
  stochastic evolution equations.
\newblock {\em Trans. Amer. Math. Soc.}, 372(12):8363--8421, 2019.

\bibitem{SetVarYau2000}
S.~Sethuraman, S.~R.~S. Varadhan, and H.-T. Yau.
\newblock Diffusive limit of a tagged particle in asymmetric simple exclusion
  processes.
\newblock {\em Comm. Pure Appl. Math.}, 53(8):972--1006, 2000.

\bibitem{Simon}
J.~Simon.
\newblock Compact sets in the space {$L^p(0,T;B)$}.
\newblock {\em Ann. Mat. Pura Appl. (4)}, 146:65--96, 1987.

\bibitem{S91}
H.~Spohn.
\newblock {\em Large Scale Dynamics of Interacting Particles}.
\newblock {Springer Science \& Business Media}, 2012.

\bibitem{VEW12}
E.~{Vanden-Eijnden} and J.~Weare.
\newblock Rare {{Event Simulation}} of {{Small Noise Diffusions}}.
\newblock {\em Communications on Pure and Applied Mathematics},
  65(12):1770--1803, December 2012.

\bibitem{Var1995}
S.~R.~S. Varadhan.
\newblock Self-diffusion of a tagged particle in equilibrium for asymmetric
  mean zero random walk with simple exclusion.
\newblock {\em Ann. Inst. H. Poincar\'{e} Probab. Statist.}, 31(1):273--285,
  1995.

\end{thebibliography}
\bibliographystyle{plain}

\end{document}